\begin{document}

\title{Dirac generating operators and Manin triples}
\author{\textsc{Zhuo Chen} \\
{\small Peking University, Department of Mathematics and LMAM} \\
{\small Beijing 100871, China} \\
{\small \href{mailto:chenzhuott@gmail.com}{\texttt{chenzhuott@gmail.com}}} \and \textsc{Mathieu Sti\'enon} \\
{\small Pennsylvania State University, Department of Mathematics} \\ {\small 109 McAllister Building, University Park, PA 16802, U.S.A.} \\
{\small \href{mailto:stienon@math.psu.edu}{\texttt{stienon@math.psu.edu}}} }

\date{}
\maketitle

\begin{abstract}
Given a pair of (real or complex) Lie algebroid structures on a vector bundle $A$ (over $M$) and its dual $A^*$, and a line bundle $\module$ such that $\module\otimes\module=(\wedge^{\TOP} A^*\otimes\wedge^{\TOP} T^*M)$, there exist two canonically defined differential operators $\bdees$ and $\bdel$ on  $\sections{\wedge A\otimes\module}$. We prove that the pair $(A,A^*)$ constitutes a Lie bialgebroid if, and only if, the square of $\bdirac =\bdees+\bdel$ is the multiplication by a function on $M$.
As a consequence, we obtain that the pair $(A,A^*)$ is a Lie bialgebroid if, and only if, $\bdirac$ is a  Dirac generating operator as defined by Alekseev \& Xu \cite{AlekseevXu}. Our approach is to establish a list of new identities relating the Lie algebroid structures on $A$ and $A^*$ (Theorem~\ref{Thm:C}).
\end{abstract}

\tableofcontents

\section{Introduction}

A Lie bialgebroid, as introduced by Mackenzie \& Xu,  
is a pair of Lie algebroids $(A, A^*)$ satisfying some compatibility condition \cite{MR1262213, MR1362125}.
They appear naturally in many places in Poisson geometry.
In \cite{math/9910078}, Roytenberg proved that the Lie bialgebroid compatibility condition is equivalent to the equation $\{H, H\}=0$, where $H$ is a certain Hamitonian function on the super-symplectic manifold $T^*A[1]$. The main purpose of this paper is to prove a quantum analogue of this condition.

More precisely, consider a pair of (real or complex) Lie algebroid structures on a vector bundle $A$ and its dual $A^*$. Assume that the line bundle (real or complex) 
$\module=(\wedge^{top} A^*\otimes\wedge^{top} T^*M)^{\thalf}$ exists. Then $\module$ is a module over $A^*$, as discovered by Evens, Lu \& Weinstein \cite{MR1726784}, and the Lie algebroid structures of $A^*$ and $A$ induce two natural differential
operators $\bdees:\sections{\wedge^k A\otimes\module}\to
\sections{\wedge^{k+1} A\otimes\module}$ and
$\bdel:\sections{\wedge^k A\otimes\module} \to\sections{\wedge^{k-1} A\otimes\module}$ (see Equations \eqref{6} to \eqref{11}). Let $\bdirac$ be the sum $\bdees+\bdel$. 

Our main theorem can be summarized as follows:

\begin{thm}\label{thm:A}
The pair $(A,A^*)$ is a Lie bialgebroid over $M$ if, and only if, the operator $\bdirac^2$ is the multiplication by a function $\fsmile\in\cinf{M}$.
\end{thm}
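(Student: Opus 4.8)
The plan is to reduce $\bdirac^2$ to an anticommutator, expand that anticommutator in a local frame with the help of Theorem~\ref{Thm:C}, and recognise its non-scalar part as precisely the obstruction to $(A,A^*)$ being a Lie bialgebroid --- a quantum counterpart of Roytenberg's $\{H,H\}=0$.

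The first step is to note that $\bdees^2=0$ and $\bdel^2=0$ hold unconditionally. Indeed $\bdees$ is the Chevalley--Eilenberg differential $d_{A^*}$ of the Lie algebroid $A^*$ acting on $\sections{\wedge A}=\sections{\wedge(A^*)^*}$, twisted by the Evens--Lu--Weinstein $A^*$-module structure on $\module$, so $\bdees^2=0$ because $A^*$ is a genuine Lie algebroid and $\module$ a genuine $A^*$-module. Dually, Lie algebroid Poincaré duality identifies $\sections{\wedge^k A\otimes\module}$ with $\sections{\wedge^{n-k}A^*\otimes\mathcal{Q}^{1/2}}$, where $n=\mathrm{rk}\,A$ and $\mathcal{Q}=\wedge^{\TOP}A\otimes\wedge^{\TOP}T^*M$ is the modular line bundle of $A$ (so $\mathcal{Q}^{1/2}=\wedge^{\TOP}T^*M\otimes\module^{-1}$ is a genuine $A$-module, being half of the flat modular $A$-connection); under this identification $\bdel$ becomes $d_A$ with coefficients in $\mathcal{Q}^{1/2}$, so $\bdel^2=0$. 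Hence $\bdirac^2=(\bdees+\bdel)^2=\bdees\bdel+\bdel\bdees$, a degree-$0$ differential operator on $\sections{\wedge A\otimes\module}$ of order at most two.

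Next I would expand $\bdees\bdel+\bdel\bdees$ in a local frame $(e_i)$ of $A$ with dual coframe $(e^i)$, writing $\bdees$ and $\bdel$ through exterior multiplications $\epsilon(e_i)$ and contractions $\iota(e^i)$; in this language $\sections{\wedge A\otimes\module}$ is the spinor module of the Clifford bundle of $A\oplus A^*$ with its canonical pairing, and $\bdirac$ is the associated cubic operator in the sense of Alekseev \& Xu. Sorting the result by differential-operator order, the order-two symbol is $\sigma_2(\bdirac^2)(\theta)=-\langle\theta,\rho_A\rho_{A^*}^*\theta\rangle$ for $\theta\in T^*M$, which vanishes exactly when $\rho_A\circ\rho_{A^*}^*\colon T^*M\to TM$ is skew-symmetric; the order-one part collects into a sum $\sum_\alpha\epsilon(X_\alpha)\,\iota(\xi_\alpha)\,(\cdots)$ whose coefficients are the ``mixed'' combinations of the structure functions and anchors of $[\,\cdot\,,\cdot\,]_A$ and $[\,\cdot\,,\cdot\,]_{A^*}$; and, by the explicit formulas of Theorem~\ref{Thm:C}, the order-zero part is multiplication by a locally defined function which, because $\module$ is a canonical square-root line bundle, is frame-independent and glues to a global $\fsmile\in\cinf{M}$. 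The crux is then Theorem~\ref{Thm:C}: its content is that the vanishing of all those ``mixed'' coefficients together with the skew-symmetry of $\rho_A\rho_{A^*}^*$ is equivalent to the Mackenzie--Xu compatibility $d_{A^*}[\,\cdot\,,\cdot\,]_A=[\,d_{A^*}\cdot\,,\cdot\,]_A+[\,\cdot\,,d_{A^*}\cdot\,]_A$, i.e.\ to $(A,A^*)$ being a Lie bialgebroid (equivalently, Roytenberg's $\{\mu,\gamma\}=0$). Granting this, if $(A,A^*)$ is a Lie bialgebroid then the order-two and order-one parts of $\bdirac^2$ vanish, so $\bdirac^2=\fsmile\cdot\mathrm{id}$; conversely, if $\bdirac^2$ is multiplication by a function, its homogeneous symbols of orders two and one vanish separately, which by Theorem~\ref{Thm:C} is precisely the bialgebroid condition. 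This establishes Theorem~\ref{thm:A}, and the Alekseev--Xu corollary then follows by checking that, once $\bdirac^2\in\cinf{M}$, the graded derived brackets built from $\bdirac$ recover the Courant bracket on $\sections{A\oplus A^*}$ with $A$ and $A^*$ as transverse Dirac structures.

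The hard part will be Theorem~\ref{Thm:C} itself: assembling the complete, correctly-signed family of identities and, in particular, pushing the anchor terms and the half-density (modular) twist through the expansion of $\bdees\bdel+\bdel\bdees$ so that everything telescopes exactly into the known compatibility condition, leaving nothing but the scalar anomaly $\fsmile$. A secondary subtlety is to confirm that the order-zero remainder really is a function and not a nontrivial bundle endomorphism of $\wedge A\otimes\module$ --- this is exactly where the canonicity of the square-root bundle $\module$ is used.
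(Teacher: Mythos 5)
Your opening observation --- that $\bdees^2=0$ and $\bdel^2=0$ hold unconditionally, so $\bdirac^2=\bdees\bdel+\bdel\bdees$ --- is correct and consistent with the paper, but the core of your argument rests on a version of Theorem~\ref{Thm:C} that the theorem does not assert. Theorem~\ref{Thm:C} says nothing about frame-dependent ``mixed coefficients'' of structure functions and anchors, nor about the order-two and order-one parts of $\bdirac^2$; the relevant content is the equivalence of the bialgebroid condition with the single operator identity $\lap=\thalf(\ld{X_0}+\ld{\xi_0})$ on $\sections{\wedge A}$ (assertion \ref{k}). To connect that to $\bdirac^2$ you must first establish the bridge identity, namely \eqref{Eqn:bdiracsquare}: $\bdirac^2 u=\big(\thalf(\ld{X_0}+\ld{\xi_0})-\lap\big)u+\thalf\big(\thalf\duality{\xi_0}{X_0}-\del X_0\big)u$. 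You never derive it; your sketch explicitly defers ``the hard part'' to an unproven telescoping in a local frame, and beyond the principal symbol the splitting of a second-order operator into ``order one plus order zero'' pieces is frame dependent, so the objects whose vanishing you want to equate with the bialgebroid condition are not even canonically defined. Likewise, the claim that the zeroth-order remainder is automatically multiplication by a function ``because $\module$ is a canonical square root'' is unjustified: that remainder is a priori an endomorphism of $\wedge A\otimes\module$ built from wedge and contraction terms acting on the $\wedge A$ factor, and its being scalar is essentially the conclusion of the theorem, not an input.

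Even granting Theorem~\ref{Thm:C} in its actual form, your converse has a genuine logical gap. From ``$\bdirac^2$ is multiplication by a function'' one only concludes that $\lap-\thalf(\ld{X_0}+\ld{\xi_0})$ is multiplication by \emph{some} function $g$; the criterion \ref{k} requires this operator to be zero, and nothing in your sketch rules out $g\neq 0$. The paper closes exactly this gap with a short but essential argument: applying $\lap-\thalf(\ld{X_0}+\ld{\xi_0})$ to $1=1\cdot 1$ and using the BV-type identity \eqref{enrollment} gives $g=2g$, hence $g=0$, and only then does Theorem~\ref{Thm:C} yield the bialgebroid condition (the forward direction then also produces the explicit value $\fsmile=\thalf\big(\thalf\duality{\xi_0}{X_0}-\del X_0\big)$). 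Without that step, or some substitute for it, the assertion that ``vanishing of the order-two and order-one parts is precisely the bialgebroid condition'' silently assumes the zeroth-order part is already the correct scalar, which is circular.
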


Since the problem is local in nature, we may assume the 
existence of a volume form $s$ on $M$ and a pair of nowhere vanishing top degree forms $\Omega\in\sections{\wedge^{top} A^*}$ and $V\in\sections{\wedge^{top} A}$ dual to each other. Hence one can consider the modular cocycles $\xi_0$ and $X_0$ associated to the Lie algebroids $A$ and $A^*$ respectively \cite{MR1726784}.

A simple computation shows that, for all $u\in\sections{\wedge A}$,
\[ \bdirac^2 u = \big(\thalf (\ld{X_0}+\ld{\xi_0})-\lap\big) u +\thalf\big(\thalf\ip{\xi_0}{X_0}-\del X_0\big) u ,\]
where $\del$ is the BV-operator of the Lie algebroid $A$ \cite{MR1675117} $\OO$-dual to the Lie algebroid  differential $\dee$ (see \eqref{13bis}) and $\lap$ is the ``Laplacian'' $\dees\del+\del\dees$.
Therefore, the problem reduces to prove 
the following

\begin{thm}\label{thm:B}
For a pair of Lie algebroids $(A,A^*)$, $\lap=\thalf(\ld{X_0}+\ld{\xi_0})$ if, and only if, $(A,A^*)$ is a Lie bialgebroid.
\end{thm}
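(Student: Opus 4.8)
The plan is to reduce both the operator identity $\lap=\thalf(\ld{X_0}+\ld{\xi_0})$ and the Lie bialgebroid condition to a common list of structural identities relating the brackets and anchors of $A$ and $A^*$, namely Theorem~\ref{Thm:C}. Concretely, I would first recall that the Lie bialgebroid condition for $(A,A^*)$ is equivalent to the compatibility $\dee[\xi,\eta]_{A^*}=[\dee\xi,\eta]_{A^*}+[\xi,\dee\eta]_{A^*}$ (equivalently, $\dee$ is a derivation of the Gerstenhaber bracket on $\sections{\wedge A^*}$, or the symmetric statement for $\dees$ and the $A$-bracket), and reformulate this as a finite set of identities holding on generators $f\in\cinf M$ and $X,Y\in\sections A$ (using the Leibniz rules to propagate them). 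Dually, I would expand $\lap=\dees\del+\del\dees$ acting on generators of $\sections{\wedge A\otimes\module}$ — a function times the reference section, and a degree-one element — using the explicit local formulas \eqref{6}–\eqref{11} for $\dees$ and $\del$ in terms of the modular cocycles $\xi_0$, $X_0$, the anchors, and the two brackets.

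The key computational step is to commute $\del$ past $\dees$ on these low-degree generators. Since $\del$ is a BV operator for $A$ (a second-order derivation of the wedge product generating the $A$-Gerstenhaber bracket) and $\dees$ is the Chevalley–Eilenberg-type differential of $A^*$ with coefficients in $\module$, the graded commutator $\lap=[\dees,\del]$ is a first-order operator; its symbol and its action on $\cinf M$ are controlled by the anchors, and its action on $\sections A$ is governed precisely by the mixed Jacobi-type expressions that measure the failure of compatibility. I expect that after collecting terms one finds $\lap u - \thalf(\ld{X_0}+\ld{\xi_0})u$ equal, for each generator $u$, to a universal $\cinf M$-linear combination of the ``defect'' tensors from Theorem~\ref{Thm:C}; these defects all vanish simultaneously iff $(A,A^*)$ is a Lie bialgebroid, and conversely if $\lap=\thalf(\ld{X_0}+\ld{\xi_0})$ then testing on all generators forces every defect to vanish. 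The modular cocycle terms are not an obstruction here: $\ld{X_0}$ and $\ld{\xi_0}$ are exactly the anomaly terms that appear when one writes $\del$ and $\dees$ in a trivialization via $\Omega$, $V$, $s$, so they cancel against the corresponding pieces of $\dees\del$ and $\del\dees$, leaving only the genuine bracket-compatibility content.

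The main obstacle I anticipate is bookkeeping: there are two anchors, two brackets, and two modular vector fields, and the naive expansion of $\dees\del+\del\dees$ on a degree-one section produces a large number of terms, many of which cancel only after repeated use of the individual Jacobi identities for $A$ and for $A^*$ and the compatibility of each anchor with its own bracket. To keep this manageable I would (i) work locally with the trivializations above so that $\del$ and $\dees$ become honest operators on $\sections{\wedge A}$ plus explicit zeroth-order correction terms, (ii) verify the identity separately in the two cases $u=f\cdot(\text{reference section})$ and $u=X\cdot(\text{reference section})$, since a first-order operator is determined by its values on such generators together with the Leibniz rule, and (iii) package the residual discrepancy as the tensors of Theorem~\ref{Thm:C}, so that the equivalence with the Lie bialgebroid property becomes a citation rather than a recomputation. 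The ``only if'' direction is then immediate from Theorem~\ref{thm:A} applied with $\fsmile$ determined by the degree-zero part, while the ``if'' direction is the substantive computation just outlined.
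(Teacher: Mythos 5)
The central gap is your unconditional claim that $\lap=\dees\del+\del\dees$ is a first-order operator, hence determined by its values on $\cinf{M}$ and $\sections{A}$ together with the Leibniz rule. That is false in general: $\dees$ is a derivation of $(\sections{\wedge A},\wedge)$ and $\del$ is second-order, so their commutator is a priori second-order, and its derivation defect is not a consequence of general BV formalism --- it is exactly the quantity the theorem is about. Indeed, Proposition~\ref{Pro:plagiarism} of the paper gives
\begin{equation*}
\lap(x\wedge y)-\lap x\wedge y-x\wedge\lap y=(-1)^k\big(\dees\ba{x}{y}-\ba{\dees x}{y}+(-1)^k\ba{x}{\dees y}\big),\qquad x\in\sections{\wedge^k A},
\end{equation*}
so $\lap$ is a derivation of $\wedge$ precisely when $(A,A^*)$ is a Lie bialgebroid. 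Consequently your reduction to generators is unjustified as stated: in the ``if'' direction it may only be invoked after this identity is proved (it is one of the two main computations, not bookkeeping), and in the ``only if'' direction testing on generators cannot suffice, because the generator-level identity is assertion \ref{e} of Theorem~\ref{Thm:C}, which the paper establishes only as a consequence of the bialgebroid condition, never as equivalent to it; nothing in your sketch supplies that converse. The paper's actual ``only if'' argument is different and short once Proposition~\ref{Pro:plagiarism} is in hand: $\thalf(\ld{X_0}+\ld{\xi_0})$ is a derivation of $\wedge$, so the hypothesis forces $\lap$ to be one, and the displayed identity then makes $\dees$ a derivation of the bracket.

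There is also a circularity problem in how you propose to close both directions. Invoking Theorem~\ref{thm:A} for the ``only if'' direction inverts the paper's logic: Theorem~\ref{thm:A} is deduced from the present statement via \eqref{Eqn:bdiracsquare}, and you give no independent proof of it. Likewise, ``packaging the residual discrepancy as the tensors of Theorem~\ref{Thm:C} so that the equivalence becomes a citation'' assumes what is to be proved, since Theorem~\ref{thm:B} is precisely the equivalence \ref{a}$\Leftrightarrow$\ref{k} of Theorem~\ref{Thm:C}. Finally, even granting the derivation property, the verification on generators is not routine: the paper needs the $\cinf{M}$-linearity of $\ld{\db{u}{\theta}}-\lb{\ld{u}}{\ld{\theta}}$ and the trace identity $\trace\big(\ld{\db{u}{\theta}}-\lb{\ld{u}}{\ld{\theta}}\big)=2\duality{\dees u}{\dee\theta}$ (Lemma~\ref{Lem:prejudice} and Step~1 of the proof of Theorem~\ref{Thm:C}), together with the $\Omega\otimes s\otimes V$ manipulations of Propositions~\ref{Pro:shrugged} and~\ref{Pro:prelude} that produce the modular terms $\ld{X_0}$ and $\ld{\xi_0}$; none of these steps, nor substitutes for them, appear in your proposal.
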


Note that the Laplacian $\lap$ has already appeared in a variety of contexts. For example, the Laplacian of the Lie bialgebroid $TM\oplus (T^*M)_{\pi}$ associated to a Poisson manifold $(M,\pi)$ was fruitfully exploited by Evens \& Lu to compute the cohomology of flag varieties \cite{MR1680047}.

Part of the motivation behind this work is to better understand the Dirac generating operators of the Courant algebroids associated to Manin triples.

Courant algebroids were introduced in \cite{MR1472888} as a way to merge the concept of Lie bialgebra and the bracket on
$\XX(M)\oplus\OO^1(M)$ --- here $M$ is a smooth manifold --- first discovered by Courant \cite{MR998124}. Roytenberg gave an equivalent definition phrased in terms of the Dorfman bracket \cite{math/9910078}, which highlighted the relation of Courant algebroids to $L_{\infty}$-algebras as was observed by Roytenberg \& Weinstein \cite{MR1656228}.

Kosmann-Schwarzbach's derived brackets \cite{MR2104437} provide another way of thinking of Courant algebroids.
Roytenberg translated them in symplectic supermanifold language to study Courant algebroids \cite{math/9910078,MR1958835}. They also motivated the following construction \cite{AlekseevXu}.

Let $E\xto{\pi}M$ be a vector bundle endowed with a non degenerate pseudo-metric $\ip{\cdot}{\cdot}$ on its fibers, and let $\clifford{E}\to M$ be the associated bundle of Clifford algebras. Assume there exists a bundle of Clifford modules $S\to M$, i.e. a vector bundle whose fiber is the Clifford module of the fiber of $\clifford{E}$. The natural $\ZZ_2$-grading of $\sections{\clifford{E}}$ induces a $\ZZ_2$-grading on the operators on $S$. The multiplication by a function $f\in\cinf{M}$ is an even operator while the Clifford action of a section $e\in\sections{E}$ is odd.
{\em A Dirac generating operator}, according to
 Alekseev \& Xu \cite{AlekseevXu}, 
 is an odd operator $\dirac$ on
$\sections{S}$ satisfying the following properties:
\begin{itemize}
\item For all $f\in\cinf{M}$, the operator $\lb{\dirac}{f}$ is the Clifford action of some section of $E$.
\item For all $e_1,e_2\in\sections{E}$, the operator
$\lb{\lb{\dirac}{e_1}}{e_2}$ is the Clifford action of some section of $E$.
\item The square of $\dirac$ is the multiplication by some function on $M$.
\end{itemize}

If $\dirac$ is a generating operator, then the derived bracket
 $\db{e_1}{e_2}=\lb{\lb{\dirac}{e_1}}{e_2}$ on
 $\sections{E}$ --- here $\lb{\cdot}{\cdot}$ stands for the graded
 commutator on the space of graded operators on $\sections{S}$ ---  together
 with the bundle map $\rho:E\to TM$ given by $\rho(e)f=2\ip{\lb{\dirac}{f}}{e}$ endow $(E,\ip{\cdot}{\cdot})$ with a Courant algebroid structure.

In \cite{AlekseevXu}, Alekseev \& Xu have proposed a construction of Dirac generating operators valid for arbitrary Courant algebroids using Courant algebroid connections.
When $E=A\oplus A^*$, Kosmann-Schwarzbach considered
the so called {\em deriving operators}, which generate
the Courant algebroid in the above sense, without requiring that the square of $\dirac$ be the multiplication by some function \cite{MR2103012}. In particular, for a Lie bialgebroid, she proved by a direct argument that $\bdirac=\bdees+\bdel$ (in a slightly different form) is a deriving operator.
Combining Kosmann-Schwarzbach's result with Theorem~\ref{thm:A}, we immediately obtain the following

\begin{thm}\label{thm:C} 
Given a pair of Lie algebroids $(A,A^*)$, it   is a Lie bialgebroid if, and only if, the operator $\bdirac=\bdees+\bdel$ is a Dirac generating operator for the vector bundle $A\oplus A^*$ with the inner product \eqref{3}.
\end{thm}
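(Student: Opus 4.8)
The plan is to derive this statement by combining Theorem~\ref{thm:A} with Kosmann-Schwarzbach's deriving-operator theorem \cite{MR2103012}, so that no substantial new computation is required. First I would fix the Clifford-theoretic picture: take $E=A\oplus A^{*}$ with the inner product \eqref{3}, and take as bundle of Clifford modules $S=\wedge A\otimes\module$, on which a section $X\in\sections{A}$ acts by exterior multiplication and a section $\xi\in\sections{A^{*}}$ acts by contraction. Tensoring the spinor bundle $\wedge A$ by the line bundle $\module$ does not disturb the Clifford module structure, so $S$ is indeed a bundle of $\clifford{E}$-modules; its $\ZZ_{2}$-grading is the parity of exterior degree, and with respect to it $\bdees$, $\bdel$, hence $\bdirac=\bdees+\bdel$, are odd operators on $\sections{S}$.

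The direction from $\bdirac$ being a Dirac generating operator to $(A,A^{*})$ being a Lie bialgebroid is then immediate: the third axiom of Alekseev \& Xu's definition \cite{AlekseevXu} asserts precisely that $\bdirac^{2}$ is the multiplication by a function on $M$, and Theorem~\ref{thm:A} turns this into the Lie bialgebroid condition.

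For the converse, assume $(A,A^{*})$ is a Lie bialgebroid. Theorem~\ref{thm:A} produces a function $\fsmile\in\cinf{M}$ with $\bdirac^{2}=\fsmile$, which is the third axiom. It remains to check the two Clifford-action axioms: that $\lb{\bdirac}{f}$ is the Clifford action of a section of $E$ for every $f\in\cinf{M}$, and that $\lb{\lb{\bdirac}{e_{1}}}{e_{2}}$ is the Clifford action of a section of $E$ for all $e_{1},e_{2}\in\sections{E}$. These two conditions say exactly that $\bdirac$ is a deriving operator for the Courant algebroid $A\oplus A^{*}$, and Kosmann-Schwarzbach has shown \cite{MR2103012} that for a Lie bialgebroid the operator $\bdees+\bdel$ is a deriving operator. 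To invoke her theorem one first identifies her operator --- written in a slightly different form, attached to a trivialization of $\module\otimes\module=\wedge^{\TOP}A^{*}\otimes\wedge^{\TOP}T^{*}M$ rather than to its square root $\module$ --- with the canonical operator $\bdirac$ of \eqref{6}--\eqref{11}. Since the statement is local, I would fix a local volume form $s$ on $M$ and dual nowhere-vanishing top-degree forms $\Omega\in\sections{\wedge^{\TOP}A^{*}}$, $V\in\sections{\wedge^{\TOP}A}$, as in the discussion preceding Theorem~\ref{thm:B}, use the induced trivialization of $\module$ to identify $\wedge A\otimes\module$ with $\wedge A$, and verify that under this identification $\bdees+\bdel$ becomes Kosmann-Schwarzbach's deriving operator up to the standard sign conventions. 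Together with $\bdirac^{2}=\fsmile$ this proves that $\bdirac$ is a Dirac generating operator.

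The one genuinely delicate point is this identification: matching the sign and normalization conventions between $\bdees+\bdel$ as defined by \eqref{6}--\eqref{11} and the deriving operator of \cite{MR2103012}, and confirming that the half-density twist by $\module$ is compatible with the Evens--Lu--Weinstein $A^{*}$-module structure \cite{MR1726784} entering the definition of $\bdees$. One may bypass \cite{MR2103012} for the first axiom by computing $\lb{\bdirac}{f}$ directly from the Leibniz rules for $\bdees$ and $\bdel$, which exhibits it as the Clifford action of the section of $A\oplus A^{*}$ obtained from $df$ through the two anchors; but the second axiom genuinely requires the bialgebroid hypothesis, which is why the real content of Theorem~\ref{thm:C} already lies in Theorem~\ref{thm:A} (equivalently Theorem~\ref{thm:B}).
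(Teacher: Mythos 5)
Your argument is correct and follows essentially the paper's own route: Theorem~\ref{thm:C} (stated formally as Corollary~\ref{Cor:B}) is obtained there exactly by combining Theorem~\ref{Thm:A} with the deriving-operator property of $\bdirac$, the only difference being that instead of matching conventions with \cite{MR2103012} the paper verifies conditions \ref{conda} and \ref{condb} of Definition~\ref{Def:DiracGTR} directly, via six commutator identities for $\dees$ and $\del$ that exhibit $\lb{\lb{\bdirac}{e_1}}{e_2}$ as the Clifford action of the Dorfman bracket \eqref{4}. One correction to your closing remark: that computation nowhere uses the compatibility, so conditions \ref{conda} and \ref{condb} hold for \emph{any} pair of Lie algebroid structures on $A$ and $A^*$, and the bialgebroid hypothesis enters solely through the square condition \ref{condc}, which is precisely the content of Theorem~\ref{Thm:A}.
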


Although the sufficient condition is just the particularization of Alekseev \& Xu's result to the case of Lie bialgebroids, the necessary condition is new. Our starting point is completely different from that in \cite{AlekseevXu}. Alekseev \& Xu proved the local existence of Dirac generating operators for general Courant algebroids.
By applying it to the case of Manin triples of Lie bialgebroids, they proved the sufficient condition in Theorem~\ref{thm:C}. Then they derived that 
$\bdirac^2$ is a function as a consequence. 
On the other hand, our approach is, in a certain sense, to take the opposite route.
We prove Theorem~\ref{thm:A} by reducing it to Theorem~\ref{thm:B}, which can be verified by a direct argument. Then Theorem~\ref{thm:C} follows immediately.
Indeed the heart of our approach consists in establishing a list of new identities (see Theorem~\ref{Thm:C}), which we believe deserve attention in their own right.

Note that the derived brackets of the Courant algebroid $TM\oplus T^*M$ have played an important role in the rapid development of the generalized complex geometry of Hitchin and Gualtieri \cites{MR1876068,math/0703298}, where many remarkable results have been established. We hope our result will be of some use in this subject.

The paper is organized as follows.

Section 1 gives a succinct account of standard facts about Courant algebroids, Lie bialgebroids and Dirac generating operators, whose purpose is to fix the notations.

The differential operator $\bdirac$ is defined in Section 2 and the main theorems are then stated without proofs.

Section 3 establishes a list of important identities valid in any pair of Lie algebroids $(A,A^*)$, which are subsequently used in Section 4 to prove the statements of Section 2.

Our results are then particularized to a few concrete situations in Section 5. Namely, we discuss the cases of exact Lie bialgebroids, Poisson Nijenhuis Lie algebroids and finally $a+b$ Lie bialgebras.

\paragraph{Acknowledgments} 
The present work was completed while Zhuo Chen was visiting the mathematics department at Penn State with support from its Shapiro fund. Mathieu Sti\'enon is grateful to Peking University for its hospitality. The authors thank Yvette Kosmannn-Schwarzbach, Jim Stasheff, and   Ping Xu for  useful discussions and comments.

\section{Preliminaries}

\subsection{Lie bialgebroids and Courant algebroids}

A Lie algebroid consists of a vector bundle $A\to M$, a bundle map $\anchor:A\to TM$ called anchor and a Lie algebra bracket $\lb{\cdot}{\cdot}$ on the space of sections $\sections{A}$ such that $\anchor$ induces a Lie algebra homomorphism from $\sections{A}$ to $\XX(M)$ and the Leibniz rule
\begin{equation*}\label{1} \lb{X}{fY}=\big(\anchor(X)f\big)Y+f\lb{X}{Y} \end{equation*}
is satisfied for all $f\in\cinf{M}$ and $X,Y\in\sections{A}$.

It is well-known \cite{MR1675117} that a Lie algebroid $(A,\ba{\cdot}{\cdot},\anchor)$ gives rise to a Gerstenhaber algebra $(\sections{\wedge\graded A},\wedge,\lb{\cdot}{\cdot})$,
and a degree~1 derivation $\dee$ of the graded commutative algebra $(\sections{\wedge\graded A^*},\wedge)$ such that $\dee^2=0$. Here the (Lie algebroid) differential $\dee$ is given by
\begin{multline*}
\label{m2}
(\dee\alpha)(X_0,X_1,\cdots,X_n)=\sum_{i=0}^n (-1)^i \anchor(X_i)
 \alpha(X_0,\cdots,\widehat{X_i},\cdots,X_n) \\
+ \sum_{i<j} (-1)^{i+j} \alpha(\ba{X_i}{X_j},X_0,\cdots,\widehat{X_i},\cdots,\widehat{X_j},\cdots,X_n).
\end{multline*}
To each $X\in\sections{A}$ is associated a degree~-1 derivation $\ii{X}$ of the graded commutative algebra $(\sections{\wedge\graded A^*},\wedge)$, given by
\begin{equation*}\label{2} (\ii{X}\alpha)(X_1,\cdots,X_n)
=\alpha(X,X_1,\cdots,X_n) .\end{equation*}
The Lie derivative $\ld{X}$ in the direction of a section $X\in\sections{A}$ is a degree~0 derivation of the graded commutative algebra $(\sections{\wedge\graded A^*},\wedge)$ defined by the relation $\ld{X}=\ii{X}\dee+\dee\ii{X}$.
The same symbol $\ld{X}$ is also used to denote the derivation of 
the Gerstenhaber algebra $(\sections{\wedge\graded A},\wedge,\ba{\cdot}{\cdot})$ induced by the Lie algebroid bracket: $\ld{X}Y=\ba{X}{Y}$ for any $X,Y\in\sections{A}$.

A Courant algebroid consists of a vector bundle $\pi:E\to M$, a non
degenerate pseudo-metric $\ip{\cdot}{\cdot}$ on the fibers of $\pi$,
a bundle map $\rho:~E\to TM$ called anchor and a $\reals$-bilinear
operation $\db{}{}$ on $\sections{E}$ called Dorfman bracket, which,
for all $f\in\cinf{M}$ and $x,y,z\in\sections{E}$ satisfy the
relations
\begin{align}
& \db{x}{(\db{y}{z})}=\db{(\db{x}{y})}{z}+\db{y}{(\db{x}{z})}; \label{g1} \\
& \rho(\db{x}{y})=\lb{\rho(x)}{\rho(y)};  \\
& \db{x}{fy}=\big(\rho(x)f\big)y+f(\db{x}{y});  \\
& \db{x}{y}+\db{y}{x}=2\DD\ip{x}{y}; \label{g4} \\
& \db{\DD f}{x}=0;  \\
& \rho(x)\ip{y}{z}=\ip{\db{x}{y}}{z}+\ip{y}{\db{x}{z}}
,  \end{align}
where $\DD:\cinf{M}\to\sections{E}$ is the $\reals$-linear map defined by
$\ip{\DD f}{x}=\thalf\rho(x)f$.

The symmetric part of the Dorfman bracket is given by \eqref{g4}.
The Courant bracket is defined as the skew-symmetric part
$\cb{x}{y}=\thalf(\db{x}{y}-\db{y}{x})$ of the Dorfman bracket. Thus we have the relation $\db{x}{y}=\cb{x}{y}+\DD\ip{x}{y}$.

The definition of a Courant algebroid can be rephrased using the Courant bracket instead of the Dorfman bracket \cite{math/9910078}.

A Dirac structure is a smooth subbundle $A\to M$ of the Courant algebroid $E$, which is maximal isotropic with respect to the pseudo-metric and whose space of sections is closed under (necessarily both) brackets.
Thus a Dirac structure inherits a canonical Lie algebroid structure.

Let $A\to M$ be a vector bundle. Assume that $A$ and its dual $A^*$ both carry a Lie algebroid structure with anchor maps
$\anchor:A\to TM$ and $\anchors:A^*\to TM$, brackets on sections
$\sections{A}\otimes_{\reals}\sections{A}\to\sections{A}:u\otimes v\mapsto\ba{u}{v}$ and
$\sections{A^*}\otimes_{\reals}\sections{A^*}\to\sections{A^*}:\theta\otimes \eta\mapsto\bas{\theta}{\eta}$, and differentials
$\dee:\sections{\wedge^{\bullet}A^*}\to\sections{\wedge^{\bullet+1}A^*}$
and $\dees:\sections{\wedge^{\bullet}A}\to\sections{\wedge^{\bullet+1}A}$.

This pair of Lie algebroids $(A,A^*)$ is a Lie bialgebroid (or Manin
triple) \cites{MR1362125,MR1746902,MR1262213} if $\dees$ is a
derivation of the Gerstenhaber algebra $(\sections{\wedge\graded
A},\wedge,\ba{\cdot}{\cdot})$ or, equivalently, if $\dee$ is a
derivation of the Gerstenhaber algebra $(\sections{\wedge\graded
A^*},\wedge,\bas{\cdot}{\cdot})$. Since the bracket
$\bas{\cdot}{\cdot}$ (resp. $\ba{\cdot}{\cdot}$) can be recovered
from the derivation $\dees$ (resp. $\dee$), one is led to the
following alternative definition. A Lie bialgebroid is a pair
$(A,\dees)$ consisting of a Lie algebroid
$(A,\ba{\cdot}{\cdot},\anchor)$ and a degree~1 derivation $\dees$ of
the Gerstenhaber algebra $(\sections{\wedge\graded
A},\wedge,\ba{\cdot}{\cdot})$ such that $\dees^2=0$.

The link between Courant and Lie bialgebroids is given by the following

\begin{thm}[\cite{MR1472888}]
\label{Thm:double} There is a 1-1 correspondence between Lie
bialgebroids and pairs of transversal Dirac structures in a Courant
algebroid.
\end{thm}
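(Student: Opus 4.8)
The plan is to realize the correspondence concretely through the \emph{double} $E=A\oplus A^*$, treating the two directions separately and then checking that they are mutually inverse. Given a Lie bialgebroid $(A,A^*)$, I would equip $E$ with the symmetric pairing $\ip{X+\xi}{Y+\eta}=\thalf\big(\ip{\xi}{Y}+\ip{\eta}{X}\big)$ of \eqref{3}, the anchor $\rho(X+\xi)=\anchor(X)+\anchors(\xi)$, and the Dorfman bracket
\[ \db{X+\xi}{Y+\eta}=\big(\ba{X}{Y}+\ld{\xi}Y-\ii{\eta}\dees X\big)+\big(\bas{\xi}{\eta}+\ld{X}\eta-\ii{Y}\dee\xi\big), \]
where $\ld{\xi}$ denotes the Lie derivative of the $A^*$-Lie algebroid acting on $\sections{A}$ and $\ld{X}$ that of the $A$-Lie algebroid acting on $\sections{A^*}$. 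By construction $A$ and $A^*$ sit inside $E$ as complementary subbundles, each maximal isotropic for the pairing, so they are automatically transversal and the pairing restricts to the canonical duality between them.

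For the forward direction the bulk of the work is to verify that $(E,\ip{\cdot}{\cdot},\rho,\db{\cdot}{\cdot})$ satisfies axioms \eqref{g1}--\eqref{g4}. The symmetry relation \eqref{g4}, the Leibniz rule in the second slot, the anchor homomorphism property and the metric-invariance identity all reduce to the Leibniz rules and anchor compatibilities of the two individual Lie algebroids together with the Cartan calculus relating $\ld{\cdot},\ii{\cdot},\dee,\dees$; these hold for \emph{any} pair of Lie algebroids, compatible or not. The decisive axiom is the Jacobi/Leibniz identity \eqref{g1}: expanding the three iterated brackets and collecting terms, the Jacobiator of $\db{\cdot}{\cdot}$ vanishes \emph{precisely} when $\dees$ is a derivation of the Gerstenhaber bracket $\ba{\cdot}{\cdot}$ on $\sections{\wedge\graded A}$, i.e.\ exactly the Lie bialgebroid compatibility. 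Finally, since $\db{\cdot}{\cdot}$ restricts to $\ba{\cdot}{\cdot}$ on $\sections{A}$ and to $\bas{\cdot}{\cdot}$ on $\sections{A^*}$, both subbundles are closed under the bracket and hence are Dirac structures whose induced Lie algebroid structures are the original ones.

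For the converse I would start from a Courant algebroid $(E,\ip{\cdot}{\cdot},\rho,\db{\cdot}{\cdot})$ together with transversal Dirac structures $A$ and $B$, so that $E=A\oplus B$. Because $A$ and $B$ are maximal isotropic and complementary, the pairing restricts to a nondegenerate pairing $A\times B\to\cinf{M}$, which identifies $B$ with $A^*$. Each Dirac structure is automatically a Lie algebroid (the Dorfman bracket restricts and is skew on an isotropic subbundle by \eqref{g4}, and the anchor is $\rho$), so $A$ and $A^*\cong B$ acquire differentials $\dee$ and $\dees$. It then remains to extract the compatibility condition, which again follows by rewriting axiom \eqref{g1} on mixed sections of $A$ and $B$ and reading off that $\dees$ is a derivation of $\ba{\cdot}{\cdot}$. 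That the two assignments invert one another is immediate once this is done: the double built from $(E,A,B)$ reproduces $(E,A,B)$ under $B\cong A^*$, and the double of a bialgebroid recovers the bialgebroid.

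The main obstacle is the Jacobiator computation underlying \eqref{g1}. Performed naively, the expansion of $\db{x}{\db{y}{z}}$, $\db{\db{x}{y}}{z}$ and $\db{y}{\db{x}{z}}$ produces many terms mixing the two brackets, the two Lie derivatives and the two contractions; organizing them so that the residual obstruction is manifestly the failure of $\dees$ to differentiate $\ba{\cdot}{\cdot}$ requires careful bookkeeping of the two Cartan identities $\ld{\xi}=\ii{\xi}\dees+\dees\ii{\xi}$ and $\ld{X}=\ii{X}\dee+\dee\ii{X}$ and of their cross-relations. To control this combinatorics I would instead adopt Roytenberg's graded-symplectic formulation on the super-symplectic manifold $T^*A[1]$ \cite{math/9910078}: encode both Lie algebroid structures in a single Hamiltonian $H$, so that the Courant axioms become consequences of the graded Jacobi identity for the derived bracket while the single scalar condition $\{H,H\}=0$ is exactly the Lie bialgebroid compatibility, making both directions and their mutual inverseness transparent.
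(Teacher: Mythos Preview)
The paper does not give a proof of this theorem: it is quoted as a known result from Liu--Weinstein--Xu \cite{MR1472888} in the preliminaries, and the text that follows (``More precisely, \dots'') merely spells out the double construction without verifying the Courant axioms. So there is no ``paper's own proof'' to compare your proposal against.

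That said, your sketch is the standard one and is essentially correct. You correctly identify the double $A\oplus A^*$ with pairing~\eqref{3} and Dorfman bracket~\eqref{4}, you correctly observe that $A$ and $A^*$ are transversal maximal isotropics, and you pinpoint the crux: the Jacobi identity~\eqref{g1} for the Dorfman bracket is equivalent to the bialgebroid compatibility (that $\dees$ be a derivation of $\ba{\cdot}{\cdot}$), while the remaining axioms hold for any pair of Lie algebroids. Your plan to control the Jacobiator computation via Roytenberg's Hamiltonian $H$ on $T^*A[1]$ with $\{H,H\}=0$ is exactly how \cite{math/9910078} streamlines this, and is a legitimate route. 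The converse direction is also correctly outlined: the nondegenerate pairing identifies $B\cong A^*$, and unwinding~\eqref{g1} on mixed sections recovers the compatibility. One minor point: you write ``axioms \eqref{g1}--\eqref{g4}'', but there are six axioms in the definition and you do address all of them in your discussion, so this is only a labeling slip.
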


More precisely, let $A\to M$ be a vector bundle such that $A$ and its dual $A^*$ both carry a Lie algebroid structure with anchor maps
$\anchor:A\to TM$ and $\anchors:A^*\to TM$, brackets on sections
$\sections{A}\otimes_{\reals}\sections{A}\to\sections{A}:u\otimes v\mapsto\ba{u}{v}$ and
$\sections{A^*}\otimes_{\reals}\sections{A^*}\to\sections{A^*}:\theta\otimes \eta\mapsto\bas{\theta}{\eta}$, and differentials
$\dee:\sections{\wedge^{\bullet}A^*}\to\sections{\wedge^{\bullet+1}A^*}$
and $\dees:\sections{\wedge^{\bullet}A}\to\sections{\wedge^{\bullet+1}A}$. If the pair $(A,A^*)$ is a Lie bialgebroid,
then the vector bundle $A\oplus A^*\to M$ together with the pseudo-metric
\begin{equation}\label{3} \ip{X_1+\xi_1}{X_2+\xi_2}=\thalf\big(\xi_1(X_2)+\xi_2(X_1)\big) ,\end{equation}
the anchor map $\rho=\anchor+\anchors$ (whose dual is given by $\DD f=\dee f+\dees f$ for $f\in\cinf{M}$) and the Dorfman bracket
\begin{equation}\label{4} \db{(X_1+\xi_1)}{(X_2+\xi_2)}=\big(\ba{X_1}{X_2}+\ld{\xi_1}X_2-\ii{\xi_2}(\dees X_1)\big)+\big(\bas{\xi_1}{\xi_2}+\ld{X_1}\xi_2-\ii{X_2}(\dee\xi_1)\big) \end{equation}
is a Courant algebroid of which $A$ and $A^*$ are transverse Dirac structures.
It is called the double of the Lie bialgebroid $(A,A^*)$.
Here $X_1,X_2$ denote arbitrary sections of $A$ and $\xi_1,\xi_2$ arbitrary sections of $A^*$.

\subsection{Dirac generating operators}

Let $V$ be a vector space of dimension $n$ endowed with a non degenerate symmetric bilinear form  $\ip{\cdot}{\cdot}$. Its Clifford algebra $\clifford{V}$ is defined as the quotient of the tensor algebra $\oplus_{k=0}^n V^{\otimes n}$ by the relations $x\otimes y+y\otimes x=2\ip{x}{y}$ ($x,y\in V$).
It is naturally an associative $\ZZ_2$-graded algebra.
Up to isomorphism, there exists a unique irreducible module $S$ of $\clifford{V}$ called spin representation \cite{MR1636473}. The vectors of $S$ are called spinors.

\begin{ex}\label{translagr}
Let $W$ be a vector space of dimension $r$.
We can endow $V=W\oplus W^*$ with the non degenerate pairing
\begin{equation*}\label{5} \ip{w_1+\omega_1}{w_2+\omega_2}=\thalf\big(\omega_1(w_2)+\omega_2(w_1)\big) ,\end{equation*}
where $w_1,w_2\in W$ and $\omega_1,\omega_2\in W^*$.
The representation of $\clifford{V}$ on $S=\oplus_{k=0}^r \wedge^k W$ defined by
$u\cdot w=u\wedge w$ and $\xi\cdot w=\ii{\xi}w$, where $u\in W$, $\xi\in W^*$ and $w\in S$, is the spin representation.
Note that $S$ is $\ZZ$- and thus also $\ZZ_2$-graded.
\end{ex}

Now let $\pi:E\to M$ be a vector bundle endowed with a non degenerate pseudo-metric  $\ip{\cdot}{\cdot}$ on its fibers and let $\clifford{E}\to M$ be the associated bundle of Clifford algebras.
Assume there exists a smooth vector bundle $S\to M$ whose fiber $S_m$ over a point $m\in M$ is the spin module of the Clifford algebra $\clifford{E}_m$. Assume furthermore that $S$ is $\ZZ_2$-graded: $S=S^0\oplus S^1$.

An operator $O$ on $\sections{S}$ is called even (or of degree~0) if $O(S^i)\subset S^{i}$ and odd (or of degree~1) if $O(S^i)\subset S^{i+1}$. Here $i\in\ZZ_2$.

\begin{ex} If the vector bundle $E$ decomposes as the direct sum $A\oplus A^*$ of two transverse Lagrangian subbundles as in Example~\ref{translagr}, then $S=\wedge A$. The multiplication by a function $f\in\cinf{M}$ is an even operator on $\sections{S}$ while the Clifford action of a section $e\in\sections{E}$ is an odd operator on $\sections{S}$.
\end{ex}

If $O_1$ and $O_2$ are operators of degree~$d_1$ and $d_2$
respectively, then their commutator is the operator
$\lb{O_1}{O_2}=O_1\rond O_2-(-1)^{d_1 d_2}O_2\rond O_1$.

\begin{defn}[\cite{AlekseevXu}]
\label{Def:DiracGTR} A Dirac generating operator for $(E,\ip{~}{~})$
is an odd operator $\dirac$ on $\sections{S}$ satisfying the
following properties:
\begin{enumerate}
\item \label{conda} For all $f\in\cinf{M}$, $\lb{\dirac}{f}\in\sections{E}$. This means that the operator $\lb{\dirac}{f}$ is the Clifford action of some section of $E$.
\item \label{condb} For all $e_1,e_2\in\sections{E}$,
$\lb{\lb{\dirac}{e_1}}{e_2}\in\sections{E}$.
\item \label{condc} The square of $\dirac$ is the multiplication by some function on $M$: that is $\dirac^2\in\cinf{M}$.
\end{enumerate}
\end{defn}

The ``deriving operators'' of \cite{MR2103012} are a related more general notion.

We have the following useful properties:
\begin{align*}
& \lb{\lb{\dirac}{\dirac}}{e}=0, \\
& \lb{\dirac}{\lb{e_1}{e_2}}=\lb{\lb{\dirac}{e_1}}{e_2}
-\lb{e_1}{\lb{\dirac}{e_2}}, \\
& \lb{\lb{\dirac}{f}}{e}=\lb{\lb{\dirac}{e}}{f},
\end{align*}
for all $f\in\cinf{M}$ and $e,e_1,e_2\in\sections{E}$.

\begin{thm}[\cite{AlekseevXu}]
Let $\dirac$ be a Dirac generating operator for a vector bundle $\pi:E\to M$. Then there is a canonical Courant algebroid structure on $E$.
The anchor $\rho:E\to TM$ is defined by $\rho(e)f=2\ip{\lb{\dirac}{f}}{e}=\lb{\lb{\dirac}{f}}{e}$,
while the Dorfman bracket reads
$\db{e_1}{e_2}=\lb{\lb{\dirac}{e_1}}{e_2}$.
\end{thm}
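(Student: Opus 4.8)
The plan is to show that the two formulas in the statement, together with $\DD f:=\lb{\dirac}{f}$, equip $(E,\ip{\cdot}{\cdot})$ with a Courant algebroid structure, working throughout at the level of graded operators on $\sections{S}$, where a function acts by multiplication (an even operator) and a section $e\in\sections{E}$ acts by Clifford action (an odd operator). The only inputs needed are: the graded commutator $\lb{\cdot}{\cdot}$ on operators satisfies graded antisymmetry, graded Jacobi and the graded Leibniz rule; multiplication operators commute with Clifford actions, so that $\lb{f}{e}=0$ and $\lb{f_1}{f_2}=0$; $\lb{e_1}{e_2}=2\ip{e_1}{e_2}$ is a multiplication operator; and the three defining properties \ref{conda}, \ref{condb}, \ref{condc}.

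First I would settle the algebraic bookkeeping. By \ref{conda}, $\DD f=\lb{\dirac}{f}\in\sections{E}$, so $\rho(e)f=\lb{\DD f}{e}=2\ip{\DD f}{e}$ is a multiplication operator, i.e.\ a function; it is $\cinf{M}$-linear in $e$, and applying the graded Leibniz rule to $\lb{\dirac}{f_1f_2}=(\DD f_1)f_2+f_1(\DD f_2)$ gives $\rho(e)(f_1f_2)=f_1\,\rho(e)f_2+f_2\,\rho(e)f_1$, so $\rho(e)\in\XX(M)$, $\rho\colon E\to TM$ is a bundle map, and $\ip{\DD f}{e}=\thalf\rho(e)f$ (so our $\DD$ is the one appearing in the Courant axioms). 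By \ref{condb}, $\db{e_1}{e_2}\in\sections{E}$. Next, from \ref{condc} the operator $\dirac^2\in\cinf{M}$ commutes with all multiplication operators and all Clifford actions, so $\lb{\lb{\dirac}{\dirac}}{a}=2\lb{\dirac^2}{a}=0$ whenever $a$ is a function or a section of $E$; graded Jacobi then gives $\lb{\dirac}{\lb{\dirac}{a}}=\thalf\lb{\lb{\dirac}{\dirac}}{a}=0$, i.e.\ $\lb{\dirac}{-}$ ``squares to zero'' on $\cinf{M}$ and on $\sections{E}$. Together with graded Jacobi this also yields the two identities recorded just before the theorem, $\lb{\dirac}{\lb{e_1}{e_2}}=\lb{\lb{\dirac}{e_1}}{e_2}-\lb{e_1}{\lb{\dirac}{e_2}}$ and, using $\lb{f}{e}=0$, the symmetry $\lb{\lb{\dirac}{f}}{e}=\lb{\lb{\dirac}{e}}{f}$.

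Then I would verify the six Courant axioms one by one. The axiom $\db{\DD f}{x}=0$ is immediate: $\db{\DD f}{x}=\lb{\lb{\dirac}{\lb{\dirac}{f}}}{x}=0$. For the Leibniz rule $\db{x}{fy}=(\rho(x)f)y+f\db{x}{y}$, apply the graded Leibniz rule to the even operator $\lb{\dirac}{x}$: $\db{x}{fy}=\lb{\lb{\dirac}{x}}{f}\,y+f\lb{\lb{\dirac}{x}}{y}$, and rewrite $\lb{\lb{\dirac}{x}}{f}=\lb{\lb{\dirac}{f}}{x}=\rho(x)f$ by the symmetry identity. For the symmetric-part axiom \eqref{g4}, add the Jacobi-expanded forms of $\db{x}{y}$ and $\db{y}{x}$ and use $\lb{x}{y}=2\ip{x}{y}$ together with graded antisymmetry to collapse everything to $\db{x}{y}+\db{y}{x}=2\lb{\dirac}{\ip{x}{y}}=2\DD\ip{x}{y}$. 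For the invariance of the metric, $\rho(x)\ip{y}{z}=\ip{\db{x}{y}}{z}+\ip{y}{\db{x}{z}}$, write $\rho(x)\ip{y}{z}=\lb{\lb{\dirac}{x}}{\thalf\lb{y}{z}}$ and let the derivation $\lb{\lb{\dirac}{x}}{-}$ act on the Clifford product $\lb{y}{z}$.

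The main obstacle is the first axiom \eqref{g1}, the Leibniz (Loday) identity: it is precisely the assertion that the derived bracket built from $\dirac$ is a left Loday bracket, which is the standard consequence of $\lb{\dirac}{\lb{\dirac}{e}}=0$ for $e\in\sections{E}$ established above. I would obtain it by expanding $\db{x}{\db{y}{z}}=\lb{\lb{\dirac}{x}}{\lb{\lb{\dirac}{y}}{z}}$, applying graded Jacobi twice to bring $\lb{\dirac}{x}$ inward, and discarding the term containing $\lb{\dirac}{\lb{\dirac}{y}}=0$ --- this is the Kosmann-Schwarzbach/Roytenberg derived-bracket computation \cites{MR2104437,math/9910078}, and the sign bookkeeping in the doubly nested graded commutators is the only genuinely delicate point, so I would invoke the general derived-bracket calculus rather than redo it by hand. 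The remaining axiom, $\rho(\db{x}{y})=\lb{\rho(x)}{\rho(y)}$, then follows from \eqref{g1}, the Leibniz rule and the metric invariance by evaluating both sides on a test function. This produces the Courant algebroid, with anchor $\rho(e)f=2\ip{\lb{\dirac}{f}}{e}=\lb{\lb{\dirac}{f}}{e}$ and Dorfman bracket $\db{e_1}{e_2}=\lb{\lb{\dirac}{e_1}}{e_2}$ as asserted.
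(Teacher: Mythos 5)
Your verification is correct. Note that the paper offers no proof of this statement at all --- it is imported from Alekseev--Xu, and the only material the paper supplies is the list of ``useful properties'' $\lb{\lb{\dirac}{\dirac}}{e}=0$, $\lb{\dirac}{\lb{e_1}{e_2}}=\lb{\lb{\dirac}{e_1}}{e_2}-\lb{e_1}{\lb{\dirac}{e_2}}$, $\lb{\lb{\dirac}{f}}{e}=\lb{\lb{\dirac}{e}}{f}$ recorded just before the theorem --- so your argument, which rederives precisely these identities from graded Jacobi together with $\dirac^2\in\cinf{M}$ and then checks the six Courant axioms by the standard derived-bracket computation, is exactly the intended route.
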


\section{Statement of the main theorem}

We follow the same setup as in \cite{AlekseevXu}.

Let $(A,\ba{\cdot}{\cdot},\anchor)$ and
$(A^*,\bas{\cdot}{\cdot},\anchors)$ be a pair of Lie algebroid structures on a rank-$n$ vector bundle $A$ over a dimension-$m$ manifold $M$ and its dual $A^*$. The line bundle $\wedge^n A^*\otimes\wedge^m T^*M$ is a module over the Lie algebroid $A^*$ \cite{MR1726784}: a section $\alpha\in\sections{A^*}$ acts on $\sections{\wedge^n A^*\otimes\wedge^m T^*M}$ by
\begin{equation}\label{6} \alpha\cdot(\alpha_1\wedge\cdots\wedge\alpha_n\otimes\mu)= \sum_{i=1}^n\big(\alpha_1\wedge\cdots\wedge\bas{\alpha}{\alpha_i}\wedge\cdots\wedge\alpha_n\otimes\mu\big) +\alpha_1\wedge\cdots\wedge\alpha_n\otimes\ld{\anchors(\alpha)}\mu .\end{equation}
If it exists, the square root $\module=(\wedge^n A^*\otimes\wedge^m T^*M)^{\thalf}$ of this line bundle is also a module over $A^*$. One can thus define a differential operator
\begin{equation}\label{7} \bdees:\sections{\wedge^k A\otimes\module}\to\sections{\wedge^{k+1} A\otimes\module} .\end{equation}
Similarly, $\explicit$ is --- provided it exists --- a module over $A$. Hence we obtain a differential operator
\begin{equation}\label{8} \sections{\wedge^k A^*\otimes\explicit}\to
\sections{\wedge^{k+1} A^*\otimes\explicit} .\end{equation}
But the isomorphisms of vector bundles
\begin{equation}\label{9} \wedge^k A^*\isomorphism \wedge^k A^* \otimes \wedge^{n-k}A^*\otimes\wedge^{n-k}A\isomorphism\wedge^{n-k}A\otimes\wedge^n A^* \end{equation}
and
\begin{equation}\label{10} \wedge^n A^*\otimes\explicit\isomorphism
(\wedge^n A^*\otimes\wedge^m T^*M)^{\thalf} \end{equation}
imply that
\begin{multline} \label{m1}
\wedge^k A^*\otimes\explicit
\isomorphism \wedge^{n-k}A\otimes\wedge^n A^*\otimes
\explicit \\ \isomorphism
\wedge^{n-k}A\otimes
(\wedge^n A^*\otimes\wedge^m T^*M)^{\thalf}
.\end{multline}
Therefore, one ends up with a differential operator
\begin{equation}\label{11} \bdel:\sections{\wedge^k A\otimes\module}
\to\sections{\wedge^{k-1} A\otimes\module} .\end{equation}


Our main results are the following theorem and its corollary.

\begin{thm}\label{Thm:A}
The pair of Lie algebroids $(A,A^*)$ is a Lie bialgebroid if, and
only if, $\bdirac^2\in\cinf{M}$, i.e. the square of the operator
$\bdirac=\bdees+\bdel$: $\sections{\wedge A\otimes\module}\to
\sections{\wedge A\otimes\module}$ is the multiplication by some function
$\fsmile\in\cinf{M}$. Moreover
$\bdirac^2_*=\fsmile$, where
$\bdiracs=\bdee+\bdels$ is defined analogously to $\bdirac$ by exchanging the roles of $A$ and $\As$.
\end{thm}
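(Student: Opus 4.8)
The plan is to compute the even operator $\bdirac^2$ explicitly --- down to a first-order differential operator plus multiplication by a function --- and then to read off both halves of the statement. Since everything is local, I would fix a volume form $s$ on $M$ together with a dual pair of nowhere-vanishing top forms $\Omega\in\sections{\wedge^n A^*}$ and $V\in\sections{\wedge^n A}$, which trivialize $\module$. Under the induced trivialization, $\bdees$ becomes $\dees+\thalf\,\epsilon(X_0)$ and $\bdel$ becomes (up to a sign) $\del+\thalf\,\iota(\xi_0)$, where $\del$ is the BV operator of $A$ that is $\Omega$-dual to $\dee$, where $\epsilon$ and $\iota$ denote exterior multiplication and contraction --- i.e.\ the Clifford action of $A\oplus A^*$ on $\sections{\wedge A}$ --- and where $X_0\in\sections{A}$, $\xi_0\in\sections{A^*}$ are the modular cocycles of $A^*$ and $A$. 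The key preliminary observation is that $\bdees^2=0$, since $A^*$ is a Lie algebroid acting on $\module$, and likewise $\bdel^2=0$; hence $\bdirac^2=\bdees\bdel+\bdel\bdees$ is an anticommutator that genuinely couples the two Lie algebroid structures, which is where the bialgebroid condition has to hide.

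Next I would expand the square of $\bdirac=(\dees+\del)+\thalf\big(\epsilon(X_0)+\iota(\xi_0)\big)$ as a sum of three terms, all four summands being odd: the term $(\dees+\del)^2$, which contributes $\pm\lap$; the Clifford square $\tfrac14\big(\epsilon(X_0)+\iota(\xi_0)\big)^2=\tfrac14\,\ip{X_0+\xi_0}{X_0+\xi_0}$, a function; and the cross term $\thalf\{\dees+\del,\,\epsilon(X_0)+\iota(\xi_0)\}$, which I would evaluate with the help of the following ingredients, each an instance of --- or an immediate consequence of --- the identities of Theorem~\ref{Thm:C}: the Cartan formula $\{\iota(\xi_0),\dees\}=\ld{\xi_0}$; the BV identity $\{\epsilon(X_0),\del\}=\ld{X_0}\mp\del X_0$; and the cocycle conditions $\dees X_0=0$ and $\dee\xi_0=0$, which annihilate $\{\epsilon(X_0),\dees\}=\epsilon(\dees X_0)$ and, through $\Omega$-duality, $\{\iota(\xi_0),\del\}$. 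After the routine sign bookkeeping, the pieces combine to give, for all $u\in\sections{\wedge A\otimes\module}$,
\[ \bdirac^2 u=\big(\thalf(\ld{X_0}+\ld{\xi_0})-\lap\big)u+\thalf\big(\thalf\ip{\xi_0}{X_0}-\del X_0\big)u , \]
the formula already announced after Theorem~\ref{thm:B}.

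From here the first assertion is immediate. Put $c=\thalf\big(\thalf\ip{\xi_0}{X_0}-\del X_0\big)\in\cinf{M}$ and $D=\thalf(\ld{X_0}+\ld{\xi_0})-\lap$, so that $\bdirac^2=D+c$. The operator $D$ annihilates constant functions; hence if $\bdirac^2$ is multiplication by a function $g$, then $D$ is multiplication by $g-c$, and applying it to the constant $1$ forces $g=c$ and $D=0$, whereas conversely $D=0$ makes $\bdirac^2$ the multiplication by $c$. Therefore $\bdirac^2\in\cinf{M}$ if and only if $\lap=\thalf(\ld{X_0}+\ld{\xi_0})$, which by Theorem~\ref{thm:B} holds exactly when $(A,A^*)$ is a Lie bialgebroid, and in that case $\bdirac^2=\fsmile$ with $\fsmile=c$. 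I expect the genuine difficulty to lie wholly inside Theorem~\ref{thm:B}: the implication ``Lie bialgebroid $\Rightarrow\lap=\thalf(\ld{X_0}+\ld{\xi_0})$'' is close to Kosmann-Schwarzbach's computations, while the converse --- that this single operator identity already forces the brackets on $A$ and $A^*$ to be compatible --- is the new content, and it is precisely what the list of identities of Theorem~\ref{Thm:C} is engineered to extract; once those are available, the expansion above is merely Cartan bookkeeping.

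For the last assertion $\bdiracs^2=\fsmile$ I would argue by duality. The $\Omega$-duality isomorphism $\Phi$ of \eqref{m1} --- the very map through which $\bdel$ was defined --- conjugates $\bdel$ to $\pm\bdee$ and $\bdees$ to $\pm\bdels$ (the two operators out of which $\bdiracs$ is built), by its construction and its symmetric counterpart, and it intertwines the Clifford action of $A\oplus A^*$ with itself; hence it conjugates $\bdirac=\bdees+\bdel$ to $\pm\bdiracs=\pm(\bdee+\bdels)$. Since $\Phi$ is $\cinf{M}$-linear it commutes with multiplication by functions, so, using $\bdirac^2=\fsmile$ from the first part, $\bdiracs^2=\Phi\,\bdirac^2\,\Phi^{-1}=\fsmile$. (Alternatively, one may simply rerun the computation of the second paragraph with $A$ and $A^*$ interchanged, invoking the self-duality of $\lap$ under $\Phi$ together with the relation of Theorem~\ref{Thm:C} that identifies $\del X_0$ with its $A^*$-counterpart.)
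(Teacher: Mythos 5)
Your proof of the main equivalence is essentially the paper's own. You arrive at the same key identity, $\bdirac^2 u=\big(\thalf(\ld{X_0}+\ld{\xi_0})-\lap\big)u+\thalf\big(\thalf\ip{\xi_0}{X_0}-\del X_0\big)u$, which the paper records as \eqref{Eqn:bdiracsquare} (``a direct calculation''); your Clifford-style expansion is a legitimate way to carry out that calculation, granted the sign slips you yourself flag (the relevant pieces are $(\dees-\del)^2=-\lap$ and, from \eqref{16}, $\del(X_0\wedge v)+X_0\wedge\del v=(\del X_0)\wedge v-\ld{X_0}v$, while $\{\ii{\xi_0},\del\}=\ii{\dee\xi_0}=0$ by \eqref{Eqt:partialinserts}; note that $\dees X_0=0$ and $\dee\xi_0=0$ are standard properties of modular cocycles, not items of Theorem~\ref{Thm:C}). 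The reduction to $\lap=\thalf(\ld{X_0}+\ld{\xi_0})$ and the appeal to Theorem~\ref{thm:B} (i.e.\ the equivalence \ref{a}$\Leftrightarrow$\ref{k} of Theorem~\ref{Thm:C}) is exactly what the paper does, and your evaluation of the zero-order operator on the constant section $1$ is a slightly more direct version of the paper's evaluation at $1\cdot 1$ via \eqref{enrollment}.

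The weak point is your primary argument for the clause $\bdiracs^2=\fsmile$. You conjugate by the isomorphism \eqref{m1} and assert that it carries $\bdirac$ to $\pm\bdiracs$ ``up to sign''; but here the sign cannot be waved away, because it changes the conclusion. The isomorphism \eqref{m1} sends $\bdel$ to $\bdee$ on the nose (that is how $\bdel$ is defined), whereas the symmetric isomorphism through which $\bdels$ is defined differs from the inverse of \eqref{m1} by degree-dependent signs of the type $(-1)^{k(n-1)}$ (compare \eqref{diverse}--\eqref{residents}); if the conjugate of $\bdirac$ came out as $\bdee-\bdels$ instead of $\bdee+\bdels$, its square would be $-\{\bdee,\bdels\}=-\bdiracs^2$ and you would deduce $\bdiracs^2=-\fsmile$, which is false in general (the $a+b$ Lie bialgebra of Section~5 has $\fsmile\neq 0$). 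So the intertwining claim requires an actual verification of these signs, which you do not supply. Your parenthetical alternative is the paper's genuine argument and does close the gap: rerun the computation with $A$ and $A^*$ exchanged to get $\bdiracs^2=\thalf\big(\thalf\ip{\xi_0}{X_0}-\dels\xi_0\big)$, and invoke $\dels\xi_0=\del X_0$; note that this last identity is Corollary~\ref{Cor:blistering}\,\ref{o} (proved from \ref{k} of Theorem~\ref{Thm:C} via $\lap V=0$ and Lemma~\ref{Lem: chores}), not literally one of the assertions of Theorem~\ref{Thm:C}.
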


\begin{cor}\label{Cor:B}
The pair of Lie algebroids $(A,A^*)$ is a Lie bialgebroid if, and
only if, $\bdirac=\bdees+\bdel$ is a Dirac generating operator for
the bundle $A\oplus A^*$ endowed with the pseudo-metric
\begin{equation*}\label{21} \ip{X_1+\xi_1}{X_2+\xi_2}=\thalf\big(\xi_1(X_2)
+\xi_2(X_1)\big) ,\end{equation*}
where $X_1,X_2\in\sections{A}$ and $\xi_1,\xi_2\in\sections{A^*}$.
\end{cor}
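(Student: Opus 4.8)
The plan is to deduce Corollary~\ref{Cor:B} from Theorem~\ref{Thm:A} together with the known result of Kosmann-Schwarzbach on deriving operators. First I would recall that, when $E=A\oplus A^*$ with the pseudo-metric~\eqref{3}, the spinor bundle is $S=\wedge A\otimes\module$ and the Clifford action of $X+\xi\in\sections{A\oplus A^*}$ on $\sections{S}$ is $(X+\xi)\cdot u = X\wedge u + \ii{\xi}u$ (tensored with the identity on $\module$), extending Example~\ref{translagr} fiberwise. The three defining conditions of a Dirac generating operator (Definition~\ref{Def:DiracGTR}) must then be checked for $\bdirac=\bdees+\bdel$.

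Next I would treat the necessity direction. Suppose $(A,A^*)$ is a Lie bialgebroid. By Kosmann-Schwarzbach's theorem \cite{MR2103012}, $\bdirac$ (in her slightly different normalization, which one identifies with ours after a short bookkeeping argument about the module $\module$ and the half-density twist) is a deriving operator for the double Courant algebroid of Theorem~\ref{Thm:double}: that is, conditions \ref{conda} and \ref{condb} of Definition~\ref{Def:DiracGTR} hold, and moreover the derived bracket $\lb{\lb{\bdirac}{e_1}}{e_2}$ recovers the Dorfman bracket~\eqref{4} and $\lb{\lb{\bdirac}{f}}{e}$ recovers the anchor $\rho=\anchor+\anchors$. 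It remains only to verify condition~\ref{condc}, that $\bdirac^2\in\cinf{M}$; but this is exactly the content of Theorem~\ref{Thm:A}. Hence $\bdirac$ is a Dirac generating operator.

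For the converse, suppose $\bdirac=\bdees+\bdel$ is a Dirac generating operator. Then in particular condition~\ref{condc} holds, so $\bdirac^2$ is the multiplication by a function on $M$; by the ``only if'' part of Theorem~\ref{Thm:A} this forces $(A,A^*)$ to be a Lie bialgebroid. (One should note that for this implication we only use condition~\ref{condc}, so the statement is in fact a little stronger than advertised.) Combining the two directions gives the equivalence claimed in Corollary~\ref{Cor:B}.

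The only genuinely non-routine point is matching conventions in the necessity direction: Kosmann-Schwarzbach works with $\wedge A$ (or a density-twisted version) and with operators $\dees$, $\del$ possibly differing from ours by signs or by the placement of the half-density $\module$, so one must verify that her deriving operator coincides with our $\bdirac$ up to an isomorphism of spinor bundles that intertwines the Clifford actions. Once this identification is in place — it is a finite, essentially formal check — conditions \ref{conda} and \ref{condb} are imported directly, and the whole corollary rests on Theorem~\ref{Thm:A}, whose proof (via Theorem~\ref{thm:B} and the identities of Theorem~\ref{Thm:C}) is carried out in Sections 3 and 4.
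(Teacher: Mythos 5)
Your argument is correct and follows essentially the same route as the paper: the equivalence is reduced to condition \ref{condc} via Theorem~\ref{Thm:A}, while conditions \ref{conda} and \ref{condb} come from the deriving-operator property, exactly as the paper (and its introduction) indicate. The only difference is that where you import Kosmann-Schwarzbach's result \cite{MR2103012} and leave the identification of conventions as a ``formal check,'' the paper makes this step self-contained by writing out six explicit commutator identities for $\dees$ and $\del$ (consequences of \eqref{16} and \eqref{Eqt:partialinserts}), which in particular shows that \ref{conda} and \ref{condb} hold for an arbitrary pair of Lie algebroids, confirming your remark that the whole equivalence rests on condition \ref{condc} alone.
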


Assume there exists a volume form $s\in\sections{\wedge^m T^*M}$ and a nowhere vanishing section $\Omega\in\sections{\wedge^n A^*}$ so that $\module$ is the trivial line bundle over $M$. And let $V\in\sections{\wedge^n A}$ be the section dual to $\Omega$: $\duality{\Omega}{V}=1$.
These induce two bundle isomorphisms:
\begin{gather}
\Omega\diese:\wedge^k A\to\wedge^{n-k}A^*:X\mapsto \ii{X}\Omega , \label{g10} \\
V\diese:\wedge^k A^*\to\wedge^{n-k}A:\xi\mapsto \ii{\xi}V ,\label{g11}
\end{gather}
which are essentially inverse to each other:

\begin{gather}\label{diverse}
(V^\sharp\circ\Omega^\sharp)(X)=(-1)^{k(n-1)}X,\qquad\forall X\in
\wedge^k A; \\
\label{residents} (\Omega^\sharp\circ V^\sharp)(\varphi)
=(-1)^{k(n-1)}\varphi,\qquad\forall\varphi\in \wedge^k\As.
\end{gather}

Consider the operator $\del$ dual to $\dee$ with respect to $\Omega\diese$:
\begin{equation}\label{12} \xymatrix{ \sections{\wedge^k A^*} \ar[r]^{V\diese} \ar[d]_{-(-1)^k\dee} &
\sections{\wedge^{n-k} A} \ar[d]^{\del} \\
\sections{\wedge^{k+1} A^*} \ar[r]_{V\diese} & \sections{\wedge^{n-k-1} A}, } \end{equation}
or \begin{equation}\label{13} -V\diese\dee\alpha=(-1)^k\del
V\diese\alpha, \quad \forall\alpha\in\sections{\wedge^k A^*},
\end{equation}
which, due to \eqref{diverse} and \eqref{residents}, can be rewritten as
\begin{equation}\label{13bis}
\Omega\diese \del \beta=(-1)^l \dA \Omega\diese \beta,\quad\forall
\beta\in \sections{\wedge^l A}.
\end{equation}

We also have the operator $\dels$ dual to $\dees$:
\begin{equation*}\label{14} \xymatrix{ \sections{\wedge^{n-k} A} \ar[d]_{ (-1)^k\dees} &
\sections{\wedge^{ k} A^*} \ar[l]_{V\diese} \ar[d]^{\dels} \\
\sections{\wedge^{n-k+1} A} & \sections{\wedge^{ k-1} A^*} \ar[l]^{V\diese}, } \end{equation*}
or \begin{equation*}\label{15}  \dees V\diese\alpha=(-1)^k
V\diese\dels \alpha, \quad \forall\alpha\in\sections{\wedge^{ k}
A^*}.
\end{equation*}

The operator $\del$ is a Batalin-Vilkovisky operator for the Lie
algebroid $A$ \cites{MR1764439,MR837203,MR1675117,MR2182214}: indeed
one has $\del^2=0$ and, for any $u\in\sections{\wedge^k A}$ and
$v\in\sections{\wedge^l A}$,
\begin{equation}\label{16} \ba{u}{v}=(-1)^k \big(\del(u\wedge v)-(\del u)\wedge v- (-1)^k u\wedge(\del v)\big) .\end{equation}
Using \eqref{16}, one can also prove that
\[ \del\ba{u}{v}=\ba{\del u}{v} +(-1)^{k+1}\ba{u}{\del v} .\]
Similar relations hold for $\dels$.

Consider the pair of operators $\Dee=\dees+\del$ on $\sections{\wedge A}$ and $\Dees=\dee+\dels$ on $\sections{\wedge A^*}$. Their squares yield the pair of Laplacian operators
\begin{gather}
\lap=\Dee^2=\dees\del+\del\dees:~~ \sections{\wedge^k A}\to\sections{\wedge^k A}, \label{g12} \\
\laps=\Dees^2=\dee\dels+\dels\dee:~~ \sections{\wedge^k
A^*}\to\sections{\wedge^k A^*}. \label{g13}
\end{gather}

There exists a unique
$X_0\in\sections{A}$ such that
\begin{equation}\label{17} \ld{\theta}(\Omega\otimes s)=(\ld{\theta}\Omega)\otimes s+\Omega\otimes(\ld{\anchors(\theta)} s)=\duality{X_0}{\theta}\Omega\otimes s,\quad\forall \theta\in\sections{A^*} .\end{equation}
Similarly, there exists a unique $\xi_0\in\sections{A^*}$ such that
\begin{equation}\label{18} \ld{u}(s\otimes V)=(\ld{\anchor(u)}s)\otimes V+s \otimes(\ld{u}V)=\duality{\xi_0}{u}s\otimes V,\quad\forall u\in\sections{A} .\end{equation}
These sections $X_0$ and $\xi_0$ are called modular cocycles and
their cohomology classes are called modular classes
\cite{MR1726784}.

A simple computation yields that
\begin{equation*}\label{19} \bdees(a\otimes l)=(\dees a+\thalf X_0\wedge a)\otimes l \end{equation*} and \begin{equation*}\label{20} \bdel(a\otimes l)=(-\del a+\thalf \ii{\xi_0}a)\otimes l ,\end{equation*}
for all $a\in\sections{\wedge A}$ and $l\in\sections{\module}$.
Hence
\[ \bdirac=\bdees+\bdel=\dees-\del+\thalf(X_0\wedge+\ii{\xi_0})
.\]

\begin{prop}\label{Pro:commissoner} {
Let   $(A,A^*)$ be a Lie bialgebroid. Then the function
${\fsmile}=\bdirac^2=\bdirac^2_*$ is determined by any of the
following two equalities.
\begin{enumerate}
\item \label{q} $\ld{X_0}(\Omega\otimes
s)=4\fsmile(\Omega\otimes s)$.
\item \label{r}
$\ld{\xi_0}(s\otimes V)=4\fsmile(s\otimes V)$.
\end{enumerate}}
\end{prop}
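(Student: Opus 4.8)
The plan is to compute $\bdirac^2$ explicitly using the formula $\bdirac=\dees-\del+\thalf(X_0\wedge{}+\ii{\xi_0})$ just derived, and then extract from the resulting expression the two characterizations of $\fsmile$. Writing $\bdirac=\Dee+\thalf e_0$ where $\Dee=\dees-\del$ (careful with the sign: $\Dee$ here is $\dees+\del$ modulo the sign convention in \eqref{13bis}, so I would keep track of $(-1)$'s as in \eqref{13}) and $e_0$ denotes the order-zero operator $X_0\wedge{}+\ii{\xi_0}$ which is precisely the Clifford action of the section $X_0+\xi_0\in\sections{A\oplus A^*}$. First I would expand
\begin{equation*}
\bdirac^2=\Dee^2+\thalf(\Dee e_0+e_0\Dee)+\tfrac14 e_0^2 .
\end{equation*}
The term $e_0^2$ is the square of a Clifford action, hence multiplication by the scalar $\ip{X_0+\xi_0}{X_0+\xi_0}=\duality{\xi_0}{X_0}$ (up to the normalization in \eqref{3}); the anticommutator $\Dee e_0+e_0\Dee$ is the super-Leibniz-type term $(\dees-\del)(X_0\wedge{}+\ii{\xi_0})+(X_0\wedge{}+\ii{\xi_0})(\dees-\del)$, which, because $\dees$ is a derivation of $\wedge$ and $\del$ satisfies \eqref{16}, reduces to the order-zero operator given by $\dees X_0$, $\del\xi_0$ (or rather $\dels$-type expressions), together with a Lie-derivative term $\ld{X_0}+\ld{\xi_0}$ acting on $\sections{\wedge A}$ — this is exactly the ``simple computation'' displayed in the introduction, $\bdirac^2 u=\big(\thalf(\ld{X_0}+\ld{\xi_0})-\lap\big)u+\thalf\big(\thalf\ip{\xi_0}{X_0}-\del X_0\big)u$. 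Finally $\Dee^2=-\lap$ (again watching signs, using \eqref{g12}).

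Next I would invoke the Lie bialgebroid hypothesis together with Theorem~\ref{thm:B} (equivalently Theorem~\ref{Thm:A}, which the excerpt allows me to assume): for a Lie bialgebroid $\lap=\thalf(\ld{X_0}+\ld{\xi_0})$ as operators on $\sections{\wedge A}$, so the Lie-derivative contributions cancel the $-\lap$, leaving $\bdirac^2=\thalf\big(\thalf\ip{\xi_0}{X_0}-\del X_0\big)=\fsmile$, a genuine function. So $\fsmile$ equals this order-zero expression; by symmetry (exchanging the roles of $A$ and $A^*$, which swaps $\dees\leftrightarrow\dee$, $\del\leftrightarrow\dels$, $X_0\leftrightarrow\xi_0$) one gets $\bdirac^2_*=\thalf\big(\thalf\ip{X_0}{\xi_0}-\dels\xi_0\big)$, and the equality $\bdirac^2=\bdirac^2_*$ forces $\del X_0=\dels\xi_0$ — a fact that already follows from Theorem~\ref{Thm:A} but which I would also want to see directly from the identities of Theorem~\ref{Thm:C}.

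It then remains to identify $\tfrac14\big(\ld{X_0}(\Omega\otimes s)/(\Omega\otimes s)\big)$ with $\fsmile$. Here I would apply $\ld{X_0}$ to the section $\Omega\otimes s$ and use the defining relation \eqref{18} of $\xi_0$ — namely $\ld{u}(s\otimes V)=\duality{\xi_0}{u}s\otimes V$ — specialized at $u=X_0$ to get one piece, $\duality{\xi_0}{X_0}$; and the BV-operator relation, since $\del$ is characterized through \eqref{13bis}–\eqref{17} by how it measures the failure of $\ld{}$ to preserve $\Omega\otimes s$, contributes $\del X_0$. Concretely, one knows that for $X\in\sections{A}$, $\ld{X}(\Omega\otimes s)=\big(\duality{\xi_0}{X}-2\del X\big)\,\Omega\otimes s$ — wait, the coefficient must be pinned down carefully; the point is that $\ld{X_0}(\Omega\otimes s)$ is a scalar multiple of $\Omega\otimes s$ and that scalar, computed via \eqref{17}/\eqref{18} and \eqref{13bis}, comes out to exactly $4\fsmile=2\ip{\xi_0}{X_0}-4\del X_0$. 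Statement~\ref{r} is the mirror image obtained by swapping $A\leftrightarrow A^*$. The main obstacle I anticipate is bookkeeping: getting every sign from \eqref{13}–\eqref{13bis} and every factor of $\thalf$ from the pseudo-metric \eqref{3} and the square-root module exactly right, and making sure the ``order-zero part'' of $\Dee e_0+e_0\Dee$ is correctly written in terms of $\del X_0$ rather than, say, $\dels\xi_0$ or some combination — the symmetry argument is clean only after this normalization is fixed.
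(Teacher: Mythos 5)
Your route is essentially the paper's own: its proof of Theorem~\ref{Thm:A} obtains \eqref{Eqn:bdiracsquare} by exactly the direct computation you sketch, invokes \ref{k} of Theorem~\ref{Thm:C} to cancel the Lie-derivative terms and get $\fsmile=\thalf(\thalf\duality{\xi_0}{X_0}-\del X_0)$ (with $\del X_0=\dels\xi_0$ coming from Corollary~\ref{Cor:blistering}), and Proposition~\ref{Pro:commissoner} is then proved precisely by computing the scalar of $\ld{X_0}(\Omega\otimes s)$ from Lemma~\ref{Lem: chores} and the defining relation \eqref{18}. The identity you wrote and then doubted, $\ld{X}(\Omega\otimes s)=(\duality{\xi_0}{X}-2\del X)\,\Omega\otimes s$, is the correct one and closes the argument at $X=X_0$; only your final restatement $4\fsmile=2\ip{\xi_0}{X_0}-4\del X_0$ is off by a factor of two, the correct normalization being $4\fsmile=\duality{\xi_0}{X_0}-2\del X_0$.
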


The proof of Theorem~\ref{Thm:A} is based on the following theorem.

\begin{thm}\label{Thm:C}
Let $A\to M$ be a vector bundle such that $A$ and $A^*$ are each endowed with a Lie algebroid structure. The following six assertions are equivalent:
\begin{enumerate}
\NEW{compteur}
\item \label{a} $\dees\ba{u}{v}=\ba{\dees u}{v} +(-1)^{k-1}\ba{u}{\dees v},\quad\forall u\in\sections{\wedge^k A}, \forall v\in\sections{\wedge^l
A}$.
\item \label{b} $\dee\bas{\theta}{\eta}=\bas{\dee\theta}{\eta}+(-1)^{k-1}\bas{\theta}{\dee\eta},\quad\forall \theta\in\sections{\wedge^k A^*}, \forall \eta\in\sections{\wedge^l
A^*}$.
\item \label{i} $\lap(u\wedge v)=\lap u\wedge v+u\wedge \lap v,\quad\forall u,v\in\sections{\wedge
A}$.
\item \label{j} $\laps(\theta\wedge\eta)=\laps\theta\wedge\eta
+\theta\wedge\laps\eta,\quad\forall \theta,\eta\in\sections{\wedge
A^*}$.
\item \label{k} $\lap=\thalf(\ld{X_0}+\ld{\xi_0}) :\sections{\wedge A}\to\sections{\wedge
A}$.
\item \label{l}
$\laps=\thalf(\ld{X_0}+\ld{\xi_0}) :\sections{\wedge
A^*}\to\sections{\wedge A^*}$.
\STO{compteur}
\end{enumerate}
and they imply the following six equivalent assertions:
\begin{enumerate}
\RCL{compteur}
\item \label{g} $\laps\duality{\theta}{u}=\duality{\laps\theta}{u}+\duality{\theta}{\lap u},\quad\forall
u\in\sections{A},\;\theta\in\sections{A^*}$.
\item \label{h} $\lap\duality{\theta}{u}=\duality{\laps\theta}{u}+\duality{\theta}{\lap u},\quad\forall
u\in\sections{A},\;\theta\in\sections{A^*}$.
\item \label{c}
For all $u\in\sections{A}$ and $\theta\in\sections{A^*}$, the map $\ld{\db{u}{\theta}}-\lb{\ld{u}}{\ld{\theta}}:\sections{A^*}\to\sections{A^*}$ is $\cinf{M}$-linear and its trace is
$\trace\big(\ld{\db{u}{\theta}}-\lb{\ld{u}}{\ld{\theta}}\big)=2\duality{\dees u}{\dee\theta}$.
\item \label{d} For all $\theta\in\sections{A^*}$ and $u\in\sections{A}$, the map $\ld{\db{\theta}{u}}-\lb{\ld{\theta}}{\ld{u}}:\sections{A}\to\sections{A}$ is $\cinf{M}$-linear and its trace is
$\trace\big(\ld{\db{\theta}{u}}-\lb{\ld{\theta}}{\ld{u}}\big)=2\duality{\dee\theta}{\dees u}$.
\item \label{e} $\lap f=\thalf(\ld{X_0}+\ld{\xi_0}) f$ and $\lap u=\thalf(\ld{X_0}+\ld{\xi_0}) u$, for all $f\in\cinf{M}$ and
$u\in\sections{A}$.
\item \label{f}
$\laps f=\thalf(\ld{X_0}+\ld{\xi_0}) f$ and $\laps
\theta=\thalf(\ld{X_0}+\ld{\xi_0}) \theta$, for all $f\in\cinf{M}$
and $\theta\in\sections{A^*}$. \STO{compteur}
\end{enumerate}
\end{thm}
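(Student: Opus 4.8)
The plan is to prove Theorem~\ref{Thm:C} by establishing a web of implications among the twelve assertions, exploiting the $A\leftrightarrow A^*$ symmetry to halve the work. First I would record the ``duality'' dictionary relating the operators on $\sections{\wedge A}$ and $\sections{\wedge A^*}$: the BV-operator $\del$ is $\OO$-dual to $\dee$ via \eqref{13}--\eqref{13bis}, and $\dels$ is $\OO$-dual to $\dees$; consequently $\Omega^\sharp$ intertwines $\lap=\dees\del+\del\dees$ with $\laps=\dee\dels+\dels\dee$ up to signs, so \eqref{i}$\Leftrightarrow$\eqref{j} and \eqref{k}$\Leftrightarrow$\eqref{l} are purely formal, and likewise \eqref{g}$\Leftrightarrow$\eqref{h}, \eqref{e}$\Leftrightarrow$\eqref{f}, \eqref{c}$\Leftrightarrow$\eqref{d}. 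This reduces the first block to the chain \eqref{a}$\Leftrightarrow$\eqref{i}$\Leftrightarrow$\eqref{k} (with \eqref{b},\eqref{j},\eqref{l} their mirror images) and the implied block to \eqref{g}$\Leftrightarrow$\eqref{c}$\Leftrightarrow$\eqref{e}.

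Next, for \eqref{a}$\Rightarrow$\eqref{i}: assertion \eqref{a} says $\dees$ is a derivation of the Gerstenhaber bracket, i.e. $(A,A^*)$ is a Lie bialgebroid; using \eqref{16} (that $\del$ generates $\ba{\cdot}{\cdot}$) together with the analogous statement for $\dels$, a direct computation expands $\lap(u\wedge v)$ and the cross-terms organize, via the bialgebroid compatibility, into $\lap u\wedge v+u\wedge\lap v$. Conversely \eqref{i}$\Rightarrow$\eqref{a}: the ``defect'' $D(u,v)=\dees\ba{u}{v}-\ba{\dees u}{v}-(-1)^{k-1}\ba{u}{\dees v}$ is known to be $\cinf{M}$-bilinear and a biderivation (this is the Mackenzie--Xu measure of failure of the bialgebroid condition), so it suffices to test it on sections of $A$; expressing $\ba{\cdot}{\cdot}$ through $\del$ and $\lap$ via \eqref{16} and its consequences, the hypothesis that $\lap$ is a wedge-derivation forces $D=0$. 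For \eqref{i}$\Leftrightarrow$\eqref{k}: compute $\lap f$, $\lap u$ and $\lap(u\wedge v)$ explicitly in terms of $\dees,\del,X_0,\xi_0$; the point is that $\thalf(\ld{X_0}+\ld{\xi_0})$ is by construction a derivation of $\wedge$, so \eqref{k}$\Rightarrow$\eqref{i} is immediate, while for the converse one checks that a wedge-derivation of $\sections{\wedge A}$ is determined by its values on $\cinf M$ and $\sections A$, and on those it is shown by a Cartan-calculus computation (using \eqref{17}--\eqref{18}) that $\lap$ agrees with $\thalf(\ld{X_0}+\ld{\xi_0})$ precisely when \eqref{i} holds---equivalently this is the content of \eqref{e}$\Rightarrow$\eqref{k}, so the first block closes once we also prove the implied block handles the degree~$\le1$ part.

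For the implied (weaker) block: \eqref{k}$\Rightarrow$\eqref{e} is trivial by restriction; \eqref{e}$\Leftrightarrow$\eqref{g} follows because $\laps\duality{\theta}{u}=\lap\duality{\theta}{u}$ (both equal an action on functions) and pairing $\lap u$ against $\theta$, resp. $\laps\theta$ against $u$, reproduces \eqref{g} exactly when $\lap,\laps$ act as the first-order operator $\thalf(\ld{X_0}+\ld{\xi_0})$ on sections; \eqref{g}$\Leftrightarrow$\eqref{c} is the substantive equivalence, obtained by computing the operator $\ld{\db u\theta}-\lb{\ld u}{\ld\theta}$ on $\sections{A^*}$ from the Dorfman bracket formula \eqref{4} and the Lie-algebroid axioms, showing its symbol vanishes (hence it is tensorial), and identifying its trace with $2\duality{\dees u}{\dee\theta}$ by a local-frame computation; matching this trace against $\laps\duality\theta u-\duality{\laps\theta}u-\duality\theta{\lap u}$ gives the equivalence with \eqref{g}. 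Finally \eqref{a}$\Rightarrow$\eqref{c} is direct from the bialgebroid identity and the trace formula, which also re-derives \eqref{a}$\Rightarrow$\eqref{g}$\Rightarrow\cdots$, so the first six imply the last six.

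I expect the main obstacle to be the tensoriality-and-trace step in \eqref{g}$\Leftrightarrow$\eqref{c} (equivalently \eqref{d}): one must show that $\ld{\db u\theta}-\lb{\ld u}{\ld\theta}$, which a priori is only $\reals$-linear on $\sections{A^*}$, is actually $\cinf M$-linear, and then pin down its trace as $2\duality{\dees u}{\dee\theta}$; this requires careful bookkeeping of the mixed terms in the Dorfman bracket \eqref{4} (the $\ld{\xi}X-\ii{\eta}\dees X$ pieces) and a clean choice of dual local frames $\{e_i\}$, $\{e^i\}$ together with the structure functions of both Lie algebroids. All the other implications are either formal duality or bounded computations using \eqref{16} and Cartan's formulas.
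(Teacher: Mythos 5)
Your overall architecture (reduce to generators, exploit the operator $\ld{\db{u}{\theta}}-\lb{\ld{u}}{\ld{\theta}}$ and its trace, pin down $\lap$ on $\cinf{M}$ and $\sections{A}$ against $\thalf(\ld{X_0}+\ld{\xi_0})$, then close the first block because two wedge-derivations agreeing on generators coincide) matches the paper, but the load-bearing shortcut you invoke is not valid. You claim that \ref{i}$\Leftrightarrow$\ref{j}, \ref{k}$\Leftrightarrow$\ref{l}, \ref{g}$\Leftrightarrow$\ref{h}, \ref{e}$\Leftrightarrow$\ref{f}, \ref{c}$\Leftrightarrow$\ref{d} are ``purely formal'' because $\Omega^\sharp$ intertwines $\lap$ with $\laps$. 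The conjugation identity is indeed exact, but $\Omega^\sharp$ is not a morphism of the exterior algebras (it exchanges wedge with contraction), so the derivation statements \ref{i} and \ref{j} do not transfer by conjugation; in the paper \ref{i}$\Leftrightarrow$\ref{j} is reached only through \ref{a}$\Leftrightarrow$\ref{b}, i.e.\ the nontrivial Mackenzie--Xu duality, combined with Proposition~\ref{Pro:plagiarism}. Likewise $\Omega^\sharp$ does not carry $\sections{A}$ to $\sections{A^*}$, and it conjugates $\ld{X_0}+\ld{\xi_0}$ only up to the zeroth-order correction $(\dels\xi_0-\del X_0)$ coming from Lemma~\ref{Lem: chores}; the vanishing of that correction is Corollary~\ref{Cor:blistering}~\ref{o}, itself a consequence of the theorem, so even \ref{k}$\Leftrightarrow$\ref{l} is not formal. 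Similarly, the equality $\lap f=\laps f$ on functions, which you use for \ref{g}$\Leftrightarrow$\ref{h} and \ref{e}$\Leftrightarrow$\ref{g}, is false for a general pair of algebroid structures: $\lap f-\laps f=\del\dees f-\dels\dee f$, and its vanishing is exactly the content of Proposition~\ref{Pro:steward}, available only under the hypotheses. The honest $A\leftrightarrow A^*$ symmetry only mirrors implications (as the paper uses: the argument for \ref{c}$\Rightarrow$\ref{e} gives \ref{d}$\Rightarrow$\ref{f}, etc.); it cannot produce these cross-equivalences for a fixed pair, which is why the paper closes the second block by the cycle \ref{g}$\Rightarrow$\ref{c}$\Rightarrow$\ref{e}$\Rightarrow$\ref{g}, its mirror, and the bridge \ref{g}$\Leftrightarrow$\ref{h} supplied by Proposition~\ref{Pro:steward}.

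A second concrete gap: you propose to ``show the symbol vanishes (hence it is tensorial)'' for $\ld{\db{u}{\theta}}-\lb{\ld{u}}{\ld{\theta}}$ as an unconditional Dorfman-bracket computation. It does not vanish unconditionally: by \eqref{nomination}, acting on $f\alpha$ produces the extra term $\duality{\dees\ba{f}{u}-\ba{\dees f}{u}+\ba{f}{\dees u}}{\theta}\,\alpha=-\duality{(\ld{\dees f}+\ld{\dee f})u}{\theta}\,\alpha$, which is nonzero in general; the $\cinf{M}$-linearity is part of assertion \ref{c} and must be deduced from \ref{a} or from \ref{g} (Propositions~\ref{Pro:supplemented} and~\ref{Pro:steward}). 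Once this is repaired, the remaining substance --- the trace identity (Lemma~\ref{Lem:prejudice} under \ref{a}, Proposition~\ref{Pro:gubernatorial} under \ref{g}) and the identities comparing $2\lap$ with $\ld{X_0}+\ld{\xi_0}$ on $\cinf{M}$ and $\sections{A}$ (Propositions~\ref{Pro:shrugged} and~\ref{Pro:prelude}, resting on the $\Omega\otimes s\otimes V$ lemmas) --- is precisely what your ``Cartan-calculus computation'' and ``local-frame computation'' must produce; that is where essentially all of the proof lives, and your plan as stated neither supplies it nor reduces to it correctly because of the shortcuts above.
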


Here are a few consequences of Theorem~\ref{Thm:C}.

\begin{cor}\label{Cor:blistering}
If the pair $(A,\As)$ is a Lie bialgebroid, we have the following relations:
\begin{enumerate}
\RCL{compteur}
\item \label{m} $\ld{X_0}V=-\ld{\xi_0}V$.
\item \label{n} $\ld{X_0}\Omega=-\ld{\xi_0}\Omega$.
\item \label{o} $\dels \xi_0=\del X_0$.
\item \label{p} $\ld{X_0}s=\ld{\xi_0}s$.
\end{enumerate}
\end{cor}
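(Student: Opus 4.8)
The plan is to reduce all four identities to the single scalar identity $\del X_0=\dels\xi_0$ — which is assertion~\eqref{o} — by showing that, for an \emph{arbitrary} pair of Lie algebroid structures on $A$ and $A^*$ (with no compatibility assumed), each of \eqref{m}, \eqref{n} and \eqref{p} is equivalent to \eqref{o}; the Lie bialgebroid hypothesis would then be used only once, to establish \eqref{o} itself.

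First I would record a few identities, valid for any pair of Lie algebroids, relating the modular cocycles $X_0$ and $\xi_0$ to the Batalin--Vilkovisky operators $\del$ and $\dels$. Applying \eqref{13bis} to $\beta=V$, and noting that $\Omega\diese(V)=\ii{V}\Omega=\duality{\Omega}{V}=1$ is constant, we get $\Omega\diese(\del V)=(-1)^n\dee(1)=0$; since $\Omega\diese$ is an isomorphism (see \eqref{diverse}--\eqref{residents}), this forces $\del V=0$. Feeding $\del V=0$ into the Batalin--Vilkovisky identity \eqref{16} with $u=X_0$, $v=V$ (and $X_0\wedge V=0$) yields $\ld{X_0}V=\ba{X_0}{V}=(\del X_0)\wedge V-X_0\wedge(\del V)=(\del X_0)\,V$. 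Likewise, \eqref{13bis} applied to $\beta=X_0$ gives $\Omega\diese(\del X_0)=-\dee(\ii{X_0}\Omega)$; since $\del X_0$ is a function this reads $(\del X_0)\Omega=-\dee(\ii{X_0}\Omega)$, whence $\ld{X_0}\Omega=\dee(\ii{X_0}\Omega)=-(\del X_0)\,\Omega$ (using $\dee\Omega=0$, which holds for degree reasons). Interchanging the roles of $A$ and $A^*$ (so that $\Omega\leftrightarrow V$, $\dee\leftrightarrow\dees$, $\del\leftrightarrow\dels$, $X_0\leftrightarrow\xi_0$) produces in the same way $\dels\Omega=0$, $\ld{\xi_0}\Omega=(\dels\xi_0)\,\Omega$ and $\ld{\xi_0}V=-(\dels\xi_0)\,V$.

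Granting these identities, \eqref{m} and \eqref{n} become mere reformulations of \eqref{o}: we have $\ld{X_0}V+\ld{\xi_0}V=(\del X_0-\dels\xi_0)\,V$ and $\ld{X_0}\Omega+\ld{\xi_0}\Omega=-(\del X_0-\dels\xi_0)\,\Omega$, and $V$, $\Omega$ are nowhere vanishing. For \eqref{p}, I would set $\theta=\xi_0$ in \eqref{17} and $u=X_0$ in \eqref{18}, and use $\ld{\xi_0}\Omega=(\dels\xi_0)\Omega$ and $\ld{X_0}V=(\del X_0)V$; this gives $\ld{\xi_0}s=(\duality{X_0}{\xi_0}-\dels\xi_0)\,s$ and $\ld{X_0}s=(\duality{\xi_0}{X_0}-\del X_0)\,s$, so that $\ld{X_0}s=\ld{\xi_0}s$ if and only if $\del X_0=\dels\xi_0$. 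Thus all four statements hinge on \eqref{o}.

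It remains to prove \eqref{o} under the Lie bialgebroid hypothesis. By Theorem~\ref{Thm:C} that hypothesis implies assertion~\eqref{k}, namely $\lap=\thalf(\ld{X_0}+\ld{\xi_0})$ on all of $\sections{\wedge A}$. Evaluating both sides on the nowhere-vanishing $V\in\sections{\wedge^n A}$: by the identities above the right-hand side equals $\thalf(\del X_0-\dels\xi_0)\,V$, whereas $\lap V=\dees(\del V)+\del(\dees V)=0$ because $\del V=0$ and $\dees V=0$ (the latter since $\wedge^{n+1}A=0$). Hence $\del X_0=\dels\xi_0$, which is \eqref{o}. (One may equally well apply assertion~\eqref{l} to $\Omega$, using $\laps\Omega=0$, $\dels\Omega=0$ and $\dee\Omega=0$.) The only delicate point in the whole argument is the sign bookkeeping in the auxiliary identities — the factor $(-1)^l$ in \eqref{13bis}, the signs in \eqref{16}, and the contraction conventions on top-degree multivectors — together with the consistent use of the $A\leftrightarrow A^*$ symmetry; conceptually, the sole substantive input is the observation that the top multivector $V$ (equivalently the top form $\Omega$) is annihilated by the full Laplacian, which pins the function $\fsmile$ down and forces $\del X_0=\dels\xi_0$.
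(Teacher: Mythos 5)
Your proof is correct and follows essentially the same route as the paper: the substantive inputs are identical, namely $\del V=0$ and $\dees V=0$ (so $\lap V=0$), assertion \ref{k} of Theorem~\ref{Thm:C}, and the identities $\ld{u}V=(\del u)V$, $\ld{u}\Omega=-(\del u)\Omega$, $\ld{\theta}\Omega=(\dels\theta)\Omega$, $\ld{\theta}V=-(\dels\theta)V$, which you re-derive but which are exactly Lemma~\ref{Lem: chores}. The only difference is organizational --- you prove \ref{o} first and deduce \ref{m}, \ref{n}, \ref{p} from it (your treatment of \ref{p} via \eqref{17} and \eqref{18} is a slightly more compact version of the paper's $\Omega\otimes s\otimes V$ computation), whereas the paper proves \ref{m}, \ref{n} first and then \ref{o} and \ref{p}; this is a harmless reordering, not a different argument.
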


\begin{cor}\label{Cor:brood}If the pair $(A,\As)$ is a Lie bialgebroid, then
 $\lap$ is a derivation of the Gerstenhaber algebra $\wedge A$:
\begin{gather}
\lap(u\wedge v)=\lap u\wedge v+u\wedge\lap v \label{g14}; \\
\lap \ba{u}{v}=\ba{\lap u}{v}+\ba{u}{\lap v} \label{g15}
\end{gather}
for all $u,v\in \sections{\wedge A}$.
\end{cor}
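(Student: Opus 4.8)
The plan is to deduce Corollary~\ref{Cor:brood} formally from Theorem~\ref{Thm:C} and the facts about the Batalin--Vilkovisky operator $\del$ recalled in Section~2; no genuine computation should be required, which is consistent with the statement being a corollary.

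I would first dispose of the Leibniz rule \eqref{g14} for the wedge product: it is verbatim assertion \ref{i} of Theorem~\ref{Thm:C}. Indeed, by definition $(A,A^*)$ is a Lie bialgebroid precisely when $\dees$ is a derivation of the Gerstenhaber bracket of $\wedge A$, that is, when assertion \ref{a} holds; and Theorem~\ref{Thm:C} gives \ref{a}~$\Leftrightarrow$~\ref{i}. Hence $\lap$ is a derivation of $\wedge$.

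For the Leibniz rule \eqref{g15} for the bracket I would use the factorization $\lap=\dees\del+\del\dees$ together with the observation that both $\dees$ and $\del$ are already derivations of the Gerstenhaber bracket $\ba{\cdot}{\cdot}$. For $\del$ this is the identity $\del\ba{u}{v}=\ba{\del u}{v}+(-1)^{k+1}\ba{u}{\del v}$ (valid for $u\in\sections{\wedge^k A}$ over any Lie algebroid $A$) recorded just after \eqref{16}; for $\dees$ it is assertion \ref{a} of Theorem~\ref{Thm:C}, which holds because $(A,A^*)$ is a Lie bialgebroid. With the grading on $\sections{\wedge A}$ shifted so that $\sections{\wedge^{k}A}$ sits in degree $k-1$ --- under which $\ba{\cdot}{\cdot}$ becomes a graded Lie bracket of degree~$0$ while $\dees$ and $\del$ have degrees $+1$ and $-1$ respectively --- the operator $\lap$ is precisely the graded commutator $\lb{\dees}{\del}$ (the sign matches since $(-1)^{(+1)(-1)}=-1$, so $\lb{\dees}{\del}=\dees\del+\del\dees$). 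Since the graded commutator of two derivations of a graded Lie bracket is again a derivation, $\lap$ is a derivation of $\ba{\cdot}{\cdot}$ of degree~$0$, which is exactly \eqref{g15}.

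The only step requiring a modicum of care is the sign bookkeeping in the elementary lemma that $\lb{D_1}{D_2}$ is a derivation whenever $D_1$ and $D_2$ are, applied with the degree shift just described; but I anticipate no real obstacle here. If one prefers to avoid the abstract lemma, \eqref{g15} can be obtained directly: expand $\lap\ba{u}{v}=(\dees\del+\del\dees)\ba{u}{v}$, apply in turn the two Leibniz rules for $\del$ and $\dees$ with respect to $\ba{\cdot}{\cdot}$, and collect terms, the cross contributions cancelling in pairs --- a short and routine calculation. Either route makes it clear that $\lap$ is a derivation of the full Gerstenhaber algebra structure on $\sections{\wedge A}$.
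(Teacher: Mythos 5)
Your proof is correct, and for \eqref{g15} it takes a genuinely different route from the paper. For \eqref{g14} you and the paper argue identically (the equivalence of assertions \ref{a} and \ref{i} in Theorem~\ref{Thm:C}). For \eqref{g15}, however, the paper invokes the implication \ref{a}~$\Rightarrow$~\ref{k}, writing $\lap=\thalf(\ld{X_0}+\ld{\xi_0})$, and then checks that each Lie derivative is a derivation of $\ba{\cdot}{\cdot}$: for $\ld{X_0}$ this is the Jacobi identity, while for $\ld{\xi_0}$ it establishes two auxiliary identities measuring the failure of $\ld{\xi}$ to derive the bracket and uses $\dee\xi_0=0$. You instead exploit the factorization $\lap=\dees\del+\del\dees$: $\del$ is a graded derivation of $\ba{\cdot}{\cdot}$ for any Lie algebroid (the identity recorded after \eqref{16}), $\dees$ is one precisely by the Lie bialgebroid hypothesis (assertion \ref{a}), and the graded commutator of two odd derivations of degrees $+1$ and $-1$ is a degree-$0$ derivation --- your signs check out, and the direct expansion you sketch does make the cross terms cancel. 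Your route is more economical: it needs only the definition of a Lie bialgebroid and the standard BV property of $\del$, bypassing the modular cocycles and, in particular, the nontrivial implication \ref{a}~$\Rightarrow$~\ref{k} of Theorem~\ref{Thm:C}. What the paper's route buys is the explicit formula $\lap=\thalf(\ld{X_0}+\ld{\xi_0})$, which ties the corollary to the modular classes that are the paper's main computational tool and makes \eqref{g15} an instant consequence of $\xi_0$ being a cocycle; but as a self-contained proof of the corollary, your argument is complete.
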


\begin{ex}\label{Ex:TM1}
There is an important example of the preceding construction in Poisson geometry, which we shall now examine. Let $P$
be a Poisson manifold with Poisson tensor $\pi$. For the more general twisted Poisson manifolds, one may consult \cite{MR2223167}. The cotangent bundle $\TsP$ carries a natural Lie algebroid structure, called the cotangent Lie algebroid of the Poisson manifold $(P,\pi)$ \cite{MR996653}.
Its anchor is the bundle map $\pisharp:~\TsP \lon TP$ and its Lie bracket is given by
\begin{equation*}\label{Eqt:fundraisers}
\pibracket{\alpha,\beta}=\LieDer_{\pisharp(\alpha)}\beta-\LieDer_{\pisharp(\beta)}\alpha
-\dA (\pi(\alpha,\beta))=\LieDer_{\pisharp(\alpha)}\beta
-\inserts_{\pisharp(\beta)}d\alpha.
\end{equation*}
for any two $1$-forms $\alpha$ and $\beta$ on $P$.
Moreover, the pair $(TP,\TsP)$ is a Lie bialgebroid. The differential $\dees$ corresponding to the Lie algebroid structure on $\TsP$ is the operator
\[ d_{\pi}=[\pi,~]:\XX^k(P)\to\XX^{k+1}(P) \]
first introduced by Lichnerowicz to define Poisson cohomology
\cite{MR0501133}.

Koszul \cite{MR837203} and Brylinski \cite{MR950556} defined
the Poisson homology operator
$\partial_\pi:~\OO^k(P)\to\OO^{k-1}(P)$ as
\[ \partial_\pi=[ \inserts_{\pi},d]=\inserts_{\pi}\circ d-d\circ\inserts_{\pi} .\]

This boundary operator is intimately related to the modular class of the
Poisson manifold introduced independently by Weinstein \cite{MR1484598} and by Brylinski \& Zuckerman \cite{MR1665693}.
Let us briefly recall its definition. The modular vector field of $P$ with respect to a volume form $\Omega\in\OO^{\TOP}(P)$ is
the derivation $X_{\Omega}$ of the algebra of functions $\cinf{P}$
characterized by
\begin{equation}\label{tongtong} \LieDer_{\pisharp(df)}\Omega = X_{\Omega}(f)\Omega .\end{equation}
It is proved in \cite{MR1726784} that the modular cocycle $X_0$ of the cotangent Lie algebroid $\TsP$ is equal to $2X_{\Omega}$. Moreover,
it is shown in \cites{MR1484598,MR1764439} that $X_{\Omega}=\del\pi$, where
$\del:\sections{\wedge^k TP}\to
\sections{\wedge^{k-1}TP}$ is the operator which generates the Schouten bracket on $\XX\graded(P)$ and is defined by
\[ \partial u = (-1)^{k} ({\Omega}^\sharp)^{-1}\circ d\circ  {\Omega}^\sharp(u), \qquad\forall u\in \sections{\wedge^k TM} .\]

Similarly, the operator $\dels:\OO^k(P)\to\OO^{k-1}(P)$ defined by
\[ \dels(\beta)=(-1)^{n-k-1} \Omega^\sharp\circ d_{\pi}\circ
(\Omega^\sharp)^{-1}(\beta), \qquad\forall\beta\in\OO^k(P) \]
generates the Lie bracket on $\Omega(P)$.

The operators $\partial_{\pi}$ and $\partial_*$ are related in the following way \cite{MR1675117}:
\[ \dels=\partial_{\pi}+\inserts_{X_\Omega} .\]

The Laplacian operators are thus
\[ \laps= \dee\dels+\dels\dee =d\partial_\pi+\partial_\pi d +
d\inserts_{X_{\Omega}}+\inserts_{X_{\Omega}}d =\LieDer_{X_{\Omega}}=\thalf \LieDer_{2X_{\Omega}} .\]
and
\[ \lap= \dees\del+\del\dees= d_\pi\partial +\partial d_\pi
=\lb{\pi}{\partial(~)}+\partial\lb{\pi}{~}=\lb{\partial\pi}{~}
=\LieDer_{X_{\Omega}}=\thalf\LieDer_{2X_{\Omega}} .\]
Since the modular cocycle $\xi_0$ of the Lie algebroid $TP$ is always zero, the above conclusions are in agreement with \ref{k} and \ref{l} of Theorem~\ref{Thm:C}.

Since here $A=TP$ and $A^*=T^*P$, we have
$\module=\wedge^{\TOP}T^*P$ and it is left to the reader to check that the operators $\bdees$ and $\bdel$ defined in \eqref{7} and \eqref{11} are given by
\begin{gather*}
\bdees(a\otimes\Omega)=([{\pi},a]+X_{\Omega}\wedge a)
\otimes\Omega, \\
\bdel(a\otimes\Omega)=-\partial a\otimes\Omega,
\end{gather*}
for all $a\in\sections{\wedge TP}$.

Exchanging the roles of $A$ and $A^*$, the line bundle $\module$ is now the trivial line bundle over $P$ and the operators $\bdee$ and $\bdels$ are given by
\begin{gather*}
\bdee(a\otimes 1)=\dee a\otimes 1, \\
\bdels(a\otimes 1)=-\partial_\pi a\otimes 1,
\end{gather*}
where $a\in\sections{\wedge T^*P}$.

We will see in Section~\ref{last_section} that the squares of both Dirac generating operators $\bdirac=\bdees+\bdel$ and $\bdiracs=\bdee+\bdels$ are zero.
\end{ex}

It is well known \cite{MR1262213} that, if $\anchor$ and $\anchors$ denote the anchor maps of a Lie bialgebroid $(A,A^*)$, the bundle map
\[ \pi\diese=\anchor\rond(\anchors)^*:T^*M\to TM \]
defines a Poisson structure on the base manifold $M$.

\begin{cor}\label{Cor:extremal}
Let $M$ be an orientable manifold with volume form $s\in\OO^{\TOP}(M)$ and let $(A,A^*)$ be a Lie bialgebroid over $M$ with associated Poisson bivector $\pi$. Then the modular vector field of the Poisson manifold $(M,\pi)$ with respect to $s$ is
\begin{equation}\label{Eqt:alluded} X_s=\thalf\big( \anchors(\xi_0)-\anchor(X_0) \big) .\end{equation}
\end{cor}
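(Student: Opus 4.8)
The plan is to compute the modular vector field $X_s$ of the Poisson manifold $(M,\pi)$ directly from its defining relation \eqref{tongtong}, namely $\LieDer_{\pisharp(df)}s = X_s(f)\,s$ for all $f\in\cinf{M}$, and to match it against the modular cocycles $X_0$ and $\xi_0$ through their defining relations \eqref{17} and \eqref{18}. The key algebraic input is that the anchor of the Poisson structure factors as $\pisharp = \anchor\circ(\anchors)^* = -\anchors\circ\anchor^*$ (the sign coming from skew-symmetry of $\pisharp$), so that for $f\in\cinf{M}$ one has $\pisharp(df) = \anchor(\dees f) = -\anchors(\dee f)$, where $\dee f\in\sections{A^*}$ and $\dees f\in\sections{A}$ are the Lie algebroid differentials of $f$. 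Thus $\LieDer_{\pisharp(df)}s$ can be written in two ways, and the strategy is to average the two expressions.

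First I would expand $\LieDer_{\anchor(\dees f)}s$. Using \eqref{18} with $u=\dees f\in\sections{A}$, we get $(\LieDer_{\anchor(\dees f)}s)\otimes V = \duality{\xi_0}{\dees f}\, s\otimes V - s\otimes \LieDer_{\dees f}V$. The term $\duality{\xi_0}{\dees f} = \duality{\dees f}{\xi_0}$ equals $\anchors(\xi_0)f$ by definition of $\dees$. For the remaining term $\LieDer_{\dees f}V$, I would use that $\dees f\wedge(\ \cdot\ )$ and contraction interact via the BV operator $\del$: indeed $\LieDer_{\dees f}V = \ba{\dees f}{V}$ can be re-expressed, via \eqref{16} applied to $u=\dees f$ (degree $1$) and $v=V$ (top degree, so $\dees f\wedge V=0$ and $\del V$ has degree $n-1$), as $-\del(\dees f\wedge V)+(\del\dees f)\wedge V+\dees f\wedge\del V = \dees f\wedge\del V$ up to sign; contracting against $\Omega$ and using \eqref{13bis} turns $\del V$ into something expressible through $\dee$ and hence through the modular data. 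Symmetrically, expanding $-\LieDer_{\anchors(\dee f)}s$ via \eqref{17} with $\theta=\dee f$ gives a term $-\duality{X_0}{\dee f} = -\anchor(X_0)f$ plus a term involving $\LieDer_{\dee f}\Omega$, handled analogously with the roles of $A$ and $A^*$ swapped. Averaging the two computations, the cross terms involving $\LieDer_{\dees f}V$ and $\LieDer_{\dee f}\Omega$ should cancel (or combine into a total-derivative term that vanishes against the volume form), leaving $X_s(f) = \tfrac12\big(\anchors(\xi_0)f - \anchor(X_0)f\big)$, which is \eqref{Eqt:alluded}.

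The main obstacle I anticipate is the bookkeeping of the cross terms $\LieDer_{\dees f}V$ and $\LieDer_{\dee f}\Omega$: one must show that, after contracting with $\Omega$ (resp.\ $V$) and symmetrizing, these terms either cancel identically or reduce to $\pisharp$-exact quantities whose Lie derivative of $s$ contributes nothing new. This is where Corollary~\ref{Cor:blistering} (e.g.\ the identities $\ld{X_0}\Omega=-\ld{\xi_0}\Omega$, $\ld{X_0}V=-\ld{\xi_0}V$ and $\ld{X_0}s=\ld{\xi_0}s$) and the relation $\bdirac^2=\bdirac^2_*$ from Theorem~\ref{Thm:A} are likely to be the decisive tools: they encode precisely the compatibility that makes the two halves of the averaged expression agree. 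An alternative, possibly cleaner route is to invoke Example~\ref{Ex:TM1}-style reasoning: regard $s$ as trivializing the module $\module_*$ for the bialgebroid with the roles of $A,A^*$ exchanged, express $4\fsmile$ both as $\ld{X_0}(\Omega\otimes s)/(\Omega\otimes s)$ and as $\ld{\xi_0}(s\otimes V)/(s\otimes V)$ via Proposition~\ref{Pro:commissoner}, and extract the statement about $\LieDer_{X_s}s$ by comparing the ``base-manifold part'' of these two expressions; the difference of the two cocycle contributions is then forced to be $2X_s$ by the uniqueness in \eqref{17}–\eqref{18}. I would carry out the direct computation first and fall back on this second route if the cross-term cancellation proves recalcitrant.
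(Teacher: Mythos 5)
Your plan is essentially the paper's own proof: expand $\ld{\dees f}(s\otimes V)$ using the defining relation \eqref{18} of $\xi_0$, identify $\ld{\dees f}s=\ld{\pi\diese(\derham f)}s=X_s(f)\,s$ via $\pi\diese(\derham f)=\anchor(\dees f)$, and control the remaining term by the bialgebroid identity of Theorem~\ref{Thm:C}; your symmetrized variant (averaging with the $\theta=\dee f$ expansion of \eqref{17}) also works, because the two cross terms are $(\lap f)V$ and $(\laps f)\Omega$ and $\lap f=\laps f$ for a Lie bialgebroid. The only corrections needed are that $\ld{\dees f}V=(\del\dees f)V=(\lap f)V$ follows directly from Lemma~\ref{Lem: chores} (in your computation via \eqref{16} the surviving term is $(\del\dees f)\wedge V$, since $\del V=0$, not $\dees f\wedge\del V$), and that the decisive input is assertion \ref{k} (equivalently \ref{e}) of Theorem~\ref{Thm:C}, namely $\lap f=\thalf(\ld{X_0}+\ld{\xi_0})f$, rather than Corollary~\ref{Cor:blistering} or the identity $\bdirac^2=\bdirac^2_*$.
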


\begin{proof}
By definition of $\xi_0$,
\[ \ld{\dees f}(s\otimes V)=\duality{\xi_0}{\dees f}s\otimes V
=(\ld{\xi_0}f)s\otimes V \qquad \forall f\in\cinf{M}.\]
Since \[ \anchor(\dees f)=\anchor\rond(\anchors)^*(\derham f)=\pi\diese(\derham f) \qquad \forall f\in\cinf{M},\]
we get
\[ \ld{\dees f}s=\ld{\anchor(\dees f)}s=\ld{\pi\diese(\derham f)}s=X_s(f)\, s \qquad \forall f\in\cinf{M},\]
where we have used the definition \eqref{tongtong} of the
modular class of a Poisson manifold.
On the other hand, it follows from Theorem~\ref{Thm:C} that
\[ \ld{\dees f}V=\del(\dees f)V=(\lap f)V
=\thalf(\ld{X_0}f+\ld{\xi_0}f)V \qquad \forall f\in\cinf{M} .\]
Hence, we obtain
\[ (\ld{\xi_0}f)s\otimes V=X_s(f)s\otimes V+
\thalf(\ld{X_0}f+\ld{\xi_0}f)s\otimes V ,\]
which completes the proof.
\end{proof}

\section{Technicalities}\label{BigFormulas}

The following objects play a crucial role in the proof of
Theorem~\ref{Thm:C}:
\begin{itemize}
\item the sections $\lap(u\wedge v)=\lap u\wedge v+u\wedge \lap v$, where $u,v\in\sections{\wedge A}$;
\item the sections $\dees\ba{u}{v}=\ba{\dees u}{v}-(-1)^k\ba{u}{\dees v}$, where $u\in\sections{\wedge^k A}$ and $v\in\sections{\wedge^l A}$;
\item the operators $\ld{\db{u}{\theta}}-\lb{\ld{u}}{\ld{\theta}}:\sections{A^*}\to\sections{A^*}$, where $u\in\sections{A}$ and $\theta\in\sections{A^*}$;
\item the functions
$\laps\duality{\theta}{u}-\duality{\laps\theta}{u}-\duality{\theta}{\lap u}$, where $u\in\sections{A}$ and $\theta\in\sections{A^*}$;
\item the operators $\ld{\dees f}+\ld{\dee f}:\sections{\wedge A}\to
\sections{\wedge A}$, where $f\in\cinf{M}$;
\item the operators $\lap-\thalf(\ld{X_0}+\ld{\xi_0}):\sections{\wedge A}\to\sections{\wedge A}$.
\end{itemize}
In this section, we will establish a bunch of key relations between
them. The proof of Theorem~\ref{Thm:C} and
Corollary~\ref{Cor:blistering} is deferred to
Section~\ref{MainProofs}.

Throughout this section, $A$ is a smooth vector bundle of rank $n$ over a smooth manifold $M$ of dimension $m$ such that $A$ and its dual $A^*$ both carry a Lie algebroid structure with anchor maps
$\anchor:A\to TM$ and $\anchors:A^*\to TM$, brackets on sections
$\sections{A}\otimes_{\reals}\sections{A}\to\sections{A}:u\otimes v\mapsto\ba{u}{v}$ and
$\sections{A^*}\otimes_{\reals}\sections{A^*}\to\sections{A^*}:\theta\otimes \eta\mapsto\bas{\theta}{\eta}$, and differentials
$\dee:\sections{\wedge^{\bullet}A^*}\to\sections{\wedge^{\bullet+1}A^*}$
and $\dees:\sections{\wedge^{\bullet}A}\to\sections{\wedge^{\bullet+1}A}$. The vector bundle $A\oplus A^*\to M$ is endowed with the pseudo-metric
\eqref{3}
and the Dorfman bracket
\eqref{4}.

Moreover, we assume there exists
a nowhere vanishing section $\Omega\in\sections{\wedge^n A^*}$. And we let $V\in\sections{\wedge^n A}$ be the section dual to $\Omega$: $\duality{\Omega}{V}=1$.
These induce two bundle isomorphisms $\Omega\diese$ and $V\diese$ as in \eqref{g10} and \eqref{g11}.
The operators $\del$, $\dels$, $\lap$ and $\laps$ are defined as earlier by the relations \eqref{12} to \eqref{g13}.

From \eqref{4}, it follows that
\begin{equation*}\label{101} \db{u}{\theta}= -\inserts_{\theta} \ds u + \LieDer_{u}\theta \qquad \text{and} \qquad
\db{\theta}{u} = \LieDer_{\theta}u -\inserts_u {\dA \theta} ,\end{equation*}
for all $u\in\sections{A}$ and $\theta\in\sections{A^*}$.

From now on, we fix a volume form $s$ of $M$, a nowhere vanishing
section $\Omega\in\sections{\wedge^n A^*}$ and its dual
$V\in\sections{\wedge^n A}$. The modular cocycle of the Lie
algebroid $A^*$ (resp. $A$) is the unique section $X_0\in\Gamma(A)$ (resp. $\xi_0\in\Gamma(A^*)$) satisfying \eqref{17} (resp. \eqref{18}).

\subsection{Some ubiquitous lemmata}

\begin{lem}\label{Lem: chores} For all $u\in\sections{A}$ and $\theta\in\sections{A^*}$, one has:
\begin{align}\label{ethnic}
\LieDer_{u}\Omega &= -(\partial u) \Omega, &
\LieDer_{u}V &= (\partial u) V, \\
\label{exposure}
\LieDer_{\theta}\Omega &= (\partial_*\theta)\Omega, & \LieDer_{\theta}V &= -(\partial_*\theta)V.
\end{align}
\end{lem}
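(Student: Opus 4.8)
The plan is to verify the first identity $\LieDer_u\Omega = -(\partial u)\Omega$ directly from the definition \eqref{13bis} of $\partial$, and then derive the remaining three by formal manipulation. Recall that $\Omega\in\sections{\wedge^n A^*}$ is a top-degree form, so $\LieDer_u\Omega$ is again a section of the line bundle $\wedge^n A^*$ and hence equals $g\cdot\Omega$ for a unique function $g\in\cinf M$; the content of the lemma is the identification $g=-\partial u$. First I would apply \eqref{13bis} with $\beta = u\in\sections{\wedge^1 A}$, which reads $\Omega^\sharp(\partial u) = -\,\dA\,\Omega^\sharp(u) = -\,\dA\,\inserts_u\Omega$. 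Since $\partial u\in\cinf M$ here, $\Omega^\sharp(\partial u) = \inserts_{\partial u}\Omega = (\partial u)\,\Omega$. On the other hand, the Cartan-type formula for the Lie algebroid $A$ gives $\LieDer_u\Omega = \inserts_u\dA\Omega + \dA\inserts_u\Omega = \dA\inserts_u\Omega$, because $\dA\Omega$ has degree $n+1$ and therefore vanishes. Combining these two computations yields $\LieDer_u\Omega = \dA\inserts_u\Omega = -(\partial u)\,\Omega$, which is exactly \eqref{ethnic}, left equation.

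Next I would obtain the companion identity $\LieDer_u V = (\partial u)V$ by pairing with $\Omega$. Since $\duality{\Omega}{V}=1$ is constant, applying the (algebroid) Lie derivative $\LieDer_u$ and using that it is a derivation compatible with the duality pairing gives $\duality{\LieDer_u\Omega}{V} + \duality{\Omega}{\LieDer_u V} = 0$. Writing $\LieDer_u V = h\cdot V$ for a unique $h\in\cinf M$ (again because $\wedge^n A$ is a line bundle) and substituting the value just found for $\LieDer_u\Omega$ gives $-(\partial u) + h = 0$, i.e.\ $h=\partial u$, which is \eqref{ethnic}, right equation. The two equations in \eqref{exposure} are proved in exactly the same way with the roles of $A$ and $A^*$ exchanged: apply the $A^*$-version of \eqref{13bis} (equivalently the defining diagram of $\dels$) to $\theta\in\sections{\wedge^1 A^*}$ to get $V^\sharp(\partial_*\theta) = -\,\dee\,\inserts_\theta V$, hence $\LieDer_\theta V = \dee\inserts_\theta V = -(\partial_*\theta)V$; then pair against $\Omega$ and use $\duality{\Omega}{V}=1$ to conclude $\LieDer_\theta\Omega = (\partial_*\theta)\Omega$. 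One small point to check carefully is the sign/degree bookkeeping in \eqref{13} versus \eqref{13bis} when the argument has degree $1$ (so $n-k = n-1$), but this is purely mechanical.

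I do not expect any serious obstacle here: the only genuinely substantive input is the Cartan homotopy formula for a Lie algebroid together with the definition of the Batalin--Vilkovisky operator $\partial$ as the $\Omega^\sharp$-conjugate of $\dA$, both of which are recalled in the excerpt. The mild subtlety worth a sentence of justification is that a Lie derivative of a top form is a function times that form---this is just the fact that $\wedge^n A^*$ and $\wedge^n A$ are line bundles trivialized by $\Omega$ and $V$ respectively---and that $\LieDer_u$ respects the pairing $\duality{\cdot}{\cdot}\colon \sections{\wedge^n A^*}\otimes\sections{\wedge^n A}\to\cinf M$, which follows since $\LieDer_u$ is a derivation on $\sections{\wedge A}$ and on $\sections{\wedge A^*}$ compatibly with the anchor.
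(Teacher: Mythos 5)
Your argument is correct and is essentially the paper's own proof: both identify $\LieDer_u\Omega=\dA\inserts_u\Omega$ with $-(\del u)\,\Omega$ via the defining relation \eqref{13}/\eqref{13bis} of $\del$ (using $\dA\Omega=0$ for the top form $\Omega$), then deduce the companion identity for $V$ by differentiating $\duality{\Omega}{V}=1$, and obtain \eqref{exposure} by exchanging the roles of $A$ and $A^*$. The only blemish is notational: in the $A^*$-version the differential acting on $\inserts_\theta V\in\sections{\wedge^{n-1}A}$ is $\dees$, not $\dee$, so the correct chain is $\LieDer_\theta V=\dees\inserts_\theta V=-(\dels\theta)V$.
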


\begin{proof}
By \eqref{diverse},
\[ u=(-1)^{n-1}V^\sharp \Omega^\sharp(u)=V^\sharp(\xi), \]
where $\xi=(-1)^{n-1}\Omega^\sharp(u)\in
\Gamma(\wedge^{n-1}\As)$.
Since \[ \dA \xi=(-1)^{n-1} \dA \inserts_u \Omega=(-1)^{n-1} \LieDer_{u}\Omega ,\]
it follows from the definition of $\del$ that
\[ (\partial u) \Omega=\Omega^\sharp(\partial u)=(-1)^{n}\Omega^\sharp
V^\sharp(\dA \xi)=-\Omega^\sharp
V^\sharp(\LieDer_{u}\Omega)=-\LieDer_{u}\Omega
.\]
And the second equality in (\ref{ethnic}) follows immediately from
\[ 0=\LieDer_u\pairing{\Omega}{V}=\pairing{\LieDer_u \Omega}{V}+\pairing{\Omega}{\LieDer_u V} .\]
Finally, the symmetry in the exchange of $A$ and $\As$ implies
\eqref{exposure}.
\end{proof}

\begin{notation*}
In the sequel, $\ld{v}A\otimes B$ means $(\ld{v}A)\otimes B$ rather than $\ld{v}(A\otimes B)$.
\end{notation*}

\begin{lem}\label{Lem:Pentagon}
For all $u\in\sections{A}$ and $\theta\in\sections{A^*}$, one has:
\begin{gather}
\label{catalyst}
\LieDer_{u}\Omega\otimes V=-\Omega\otimes \LieDer_{u}V, \\ \label{capitalization}
\LieDer_{\theta}\Omega\otimes V=-\Omega\otimes
\LieDer_{\theta}V, \\
\label{quashed}
\LieDer_{u}\Omega\otimes \LieDer_{\theta}V=
\LieDer_{\theta}\Omega\otimes \LieDer_{u}V, \\
\label{duplicate}
\LieDer_{\theta}\LieDer_{u}\Omega\otimes V +
\Omega\otimes \LieDer_{\theta}\LieDer_{u}V =-
2\LieDer_{u}\Omega\otimes \LieDer_{\theta}V.
\end{gather}
\end{lem}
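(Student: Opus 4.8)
The plan is to obtain all four identities as direct consequences of Lemma~\ref{Lem: chores}, exploiting the fact that $\Omega\otimes V$ is the canonical nowhere-vanishing section of the (canonically trivial) line bundle $\wedge^n A^*\otimes\wedge^n A$ — indeed $\duality{\Omega}{V}=1$ — so that it is annihilated by every Lie derivative.

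First I would dispose of \eqref{catalyst} and \eqref{capitalization}. By the first equalities in \eqref{ethnic} and \eqref{exposure} one has $\LieDer_{u}\Omega=-(\partial u)\Omega$ and $\LieDer_{\theta}\Omega=(\partial_*\theta)\Omega$, and by the second equalities $\LieDer_{u}V=(\partial u)V$ and $\LieDer_{\theta}V=-(\partial_*\theta)V$. Hence $\LieDer_{u}\Omega\otimes V=-(\partial u)\,\Omega\otimes V=-\,\Omega\otimes\LieDer_{u}V$, which is \eqref{catalyst}, and likewise $\LieDer_{\theta}\Omega\otimes V=(\partial_*\theta)\,\Omega\otimes V=-\,\Omega\otimes\LieDer_{\theta}V$, which is \eqref{capitalization}. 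Equivalently, these two identities merely assert $\LieDer_{u}(\Omega\otimes V)=0=\LieDer_{\theta}(\Omega\otimes V)$, which also follows from $\duality{\Omega}{V}=1$ being constant together with the compatibility of the Lie algebroid Lie derivative with the duality pairing (the very identity used in the proof of Lemma~\ref{Lem: chores}).

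Next, for \eqref{quashed} I would substitute the same four formulas: both $\LieDer_{u}\Omega\otimes\LieDer_{\theta}V$ and $\LieDer_{\theta}\Omega\otimes\LieDer_{u}V$ become $(\partial u)(\partial_*\theta)\,\Omega\otimes V$, so they coincide. Finally, for \eqref{duplicate}, I would start from $\LieDer_{u}(\Omega\otimes V)=0$, established above, and apply $\LieDer_{\theta}$. Expanding $\LieDer_{\theta}\LieDer_{u}(\Omega\otimes V)$ by the Leibniz rule into its four terms yields $\LieDer_{\theta}\LieDer_{u}\Omega\otimes V+\LieDer_{u}\Omega\otimes\LieDer_{\theta}V+\LieDer_{\theta}\Omega\otimes\LieDer_{u}V+\Omega\otimes\LieDer_{\theta}\LieDer_{u}V=0$. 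Moving the two cross terms to the right-hand side and using \eqref{quashed} to rewrite $\LieDer_{\theta}\Omega\otimes\LieDer_{u}V$ as $\LieDer_{u}\Omega\otimes\LieDer_{\theta}V$ produces exactly \eqref{duplicate}.

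There is no genuine obstacle here: the argument is pure bookkeeping. The only point requiring care is the consistent use of the convention stated just before the lemma, that $\LieDer_{v}A\otimes B$ means $(\LieDer_{v}A)\otimes B$ and not $\LieDer_{v}(A\otimes B)$, and the correct Leibniz expansion of the iterated Lie derivative of a tensor product into its four summands.
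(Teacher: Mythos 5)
Your argument is correct and follows essentially the same route as the paper: \eqref{catalyst}, \eqref{capitalization} and \eqref{quashed} are read off from Lemma~\ref{Lem: chores}, and \eqref{duplicate} is obtained by applying $\LieDer_{\theta}$ to \eqref{catalyst} (your Leibniz expansion of $\LieDer_{\theta}\LieDer_{u}(\Omega\otimes V)=0$ is the same computation) and then invoking \eqref{quashed}.
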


\begin{proof}
Equations \eqref{catalyst}, \eqref{capitalization} and
\eqref{quashed} follow directly from Lemma~\ref{Lem: chores}.
Applying $\LieDer_{\theta}$ to both sides of
Equation~\eqref{catalyst} and making use of \eqref{quashed}, we get
\eqref{duplicate}.
\end{proof}

\begin{lem}\label{Lem:treatises}
For all $\theta\in \Gamma(\As)$ and $u\in\sections{A}$, we have
\begin{equation}\label{Eqt:partialinserts}
\partial
\inserts_{\theta}+\inserts_{\theta}\partial=\inserts_{\dA
\theta} \qquad \text{and} \qquad
\partial_*\inserts_{u}+\inserts_{u}\partial_*=\inserts_{\ds u}.
\end{equation}
\end{lem}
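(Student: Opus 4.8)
The two identities in \eqref{Eqt:partialinserts} are swapped into one another by exchanging the roles of $A$ and $A^*$, so it suffices to prove the first one, $\partial\inserts_{\theta}+\inserts_{\theta}\partial=\inserts_{\dA\theta}$ on $\sections{\wedge^\bullet A}$. The natural approach is to transport everything to the dual side via the isomorphism $\Omega^\sharp:\wedge^k A\to\wedge^{n-k}A^*$ and use the defining relation \eqref{13bis}, namely $\Omega^\sharp\partial\beta=(-1)^l\dA\,\Omega^\sharp\beta$ for $\beta\in\sections{\wedge^l A}$, which turns $\partial$ into the Lie algebroid differential $\dA$ up to sign. The other ingredient is the elementary commutation rule between $\inserts_\theta$ (contraction on $\wedge A$ by $\theta\in\sections{A^*}$) and $\Omega^\sharp$: since $\Omega^\sharp(u)=\inserts_u\Omega$, one checks on decomposables that $\Omega^\sharp(\inserts_\theta u)=\pm\,\theta\wedge\Omega^\sharp(u)$, i.e. contraction by $\theta$ on the $A$-side corresponds to left exterior multiplication by $\theta$ on the $A^*$-side, with a sign depending on the degree.

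Concretely, I would fix $u\in\sections{\wedge^k A}$, apply $\Omega^\sharp$ to the claimed identity, and rewrite each of the three terms. Writing $\epsilon(k)$ for the relevant sign, $\Omega^\sharp\inserts_\theta$ becomes $\pm\,\theta\wedge\Omega^\sharp$, $\Omega^\sharp\partial$ becomes $\pm\,\dA\,\Omega^\sharp$ by \eqref{13bis}, and $\Omega^\sharp\inserts_{\dA\theta}$ becomes $\pm\,(\dA\theta)\wedge\Omega^\sharp$. After this substitution the desired equality reduces, on the image $\varphi=\Omega^\sharp(u)\in\sections{\wedge^{n-k}A^*}$, to the graded Leibniz rule for $\dA$ acting on $\theta\wedge\varphi$: $\dA(\theta\wedge\varphi)=(\dA\theta)\wedge\varphi-\theta\wedge\dA\varphi$, which is exactly the statement that $\dA$ is a degree-$1$ derivation of $(\sections{\wedge^\bullet A^*},\wedge)$, recalled in Section~2. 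Since $\Omega^\sharp$ is an isomorphism (see \eqref{diverse}, \eqref{residents}), matching the two sides back on $\wedge A$ proves the identity.

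The only real work is bookkeeping the signs: one must pin down $\epsilon(k)$ in $\Omega^\sharp(\inserts_\theta u)=\epsilon(k)\,\theta\wedge\Omega^\sharp(u)$ and then verify that the signs coming from \eqref{13bis} (the $(-1)^l$), from moving $\dA$ past nothing, and from the Leibniz sign $(-1)^{|\theta|}=-1$ all conspire so that the three transported terms assemble into the Leibniz identity with the correct relative coefficients. I expect this sign chase — and in particular confirming consistency between the conventions in \eqref{12}--\eqref{13bis} and the contraction convention — to be the main (and essentially the only) obstacle; once the signs are fixed the proof is immediate. Finally, the second identity follows by the $A\leftrightarrow A^*$ symmetry, using $\dees$, $\partial_*$ and $V^\sharp$ in place of $\dA$, $\partial$ and $\Omega^\sharp$.
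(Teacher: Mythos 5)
Your plan is correct and is essentially the paper's own proof: the paper writes $x=V^\sharp(\omega)$ and uses $\inserts_{\theta}V^\sharp(\omega)=V^\sharp(\omega\wedge\theta)$ together with the duality $\partial V^\sharp=\pm V^\sharp \dA$ and the Leibniz rule for $\dA$, which is exactly your argument read through the inverse isomorphism, and the second identity is likewise obtained by the $A\leftrightarrow A^*$ symmetry. The sign chase you defer does close up --- for instance $\Omega^\sharp(\inserts_{\theta}u)=(-1)^{k+1}\,\theta\wedge\Omega^\sharp(u)$ for $u\in\sections{\wedge^k A}$, and $\Omega^\sharp(\inserts_{\dA\theta}u)=\dA\theta\wedge\Omega^\sharp(u)$ --- so nothing essential is missing.
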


\begin{proof}
Given $x\in\Gamma(\wedge^k A)$, there exists $\omega\in \Gamma(\wedge^{n-k}\As)$ such that $x=V^\sharp(\omega)$.
Applying $\partial$ to both sides of
\[ \inserts_{\theta}x=\theta \contract (\omega\contract V)=
(\omega\wedge \theta)\contract V=V^\sharp (\omega\wedge \theta) ,\]
we obtain
\begin{multline*}
\partial(\inserts_{\theta}x)=(-1)^{n-k} V^\sharp \dA
(\omega\wedge\theta)
=(-1)^{n-k} V^\sharp(\dA \omega\wedge \theta +
(-1)^{n-k}\omega\wedge \dA \theta) \\
= (-1)^{n-k}\theta\contract (\dA \omega\contract V ) + \dA \theta
\contract (V^\sharp \omega)= \dA \theta \contract x -
\theta\contract \partial x
. \qedhere \end{multline*}
\end{proof}

\subsection{Key relations}

\begin{prop}\label{Pro:plagiarism}
For any $x\in \Gamma(\wedge^{k}A)$ and $y\in
\Gamma(\wedge^{l}A)$, one has
\begin{equation*} \label{satisfaction}
\Delta(x\wedge y)-(\Delta x)\wedge y- x\wedge (\Delta y) =(-1)^k(\ds \Abracket{x,y}- \Abracket{\ds x,y}+(-1)^k \Abracket{x,\ds y})
.\end{equation*}
In particular, for all $f,g\in\CIM$ and $u,v\in \Gamma(A)$, one has
\begin{gather}\label{enrollment}
\Delta(fg)-f\Delta g-g\Delta f =-\Abracket{\ds f,g}+\Abracket{f,\ds
g}; \\ \label{interaction}
\Delta(fu)-f\Delta u -(\Delta f) u= \ds\Abracket{f,u} -\Abracket{\ds
f, u }+\Abracket{f,\ds u};
\\ \label{crosscultural} \nonumber
\Delta(u\wedge v)-(\Delta u)\wedge v-u\wedge (\Delta v) =\ds\Abracket{u,v}-\Abracket{\ds u,v}-\Abracket{u,\ds v}.
\end{gather}
\end{prop}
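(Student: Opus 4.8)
The plan is to reduce the identity to the Batalin--Vilkovisky relation~\eqref{16} for the operator $\del$ and the elementary fact that $\dees$ is a derivation of the wedge product of degree~$+1$. Recall that $\Delta=\dees\del+\del\dees$. The strategy is to expand $\Delta(x\wedge y)$ directly, commuting $\dees$ and $\del$ past the wedge product one at a time and keeping careful track of signs, using only the following two facts: (i) $\dees(a\wedge b)=(\dees a)\wedge b+(-1)^{|a|}a\wedge(\dees b)$, valid for any pair of Lie algebroids since $\dees$ is the Lie algebroid differential of $A^*$ acting on $\sections{\wedge\graded A}$; and (ii) the identity~\eqref{16}, which rewrites $\del(a\wedge b)$ in terms of $(\del a)\wedge b$, $a\wedge(\del b)$, and the Gerstenhaber bracket $\Abracket{a,b}$ of $A$. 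Note that \emph{neither} fact uses the Lie bialgebroid compatibility condition, so the identity holds for an arbitrary pair of Lie algebroid structures on $A$ and $A^*$, which is exactly what the statement claims.

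First I would compute $\del\dees(x\wedge y)$: apply (i) to split $\dees(x\wedge y)$, then apply \eqref{16} to each of the two resulting $\del$-of-a-product terms (with the degree of the first factor shifted by one in the first term). Next I would compute $\dees\del(x\wedge y)$: apply \eqref{16} first to $\del(x\wedge y)$, then distribute $\dees$ over each wedge product using (i), and use that $\dees$ is a derivation of the bracket $\Abracket{\cdot}{\cdot}$ of degree~$+1$ --- i.e. the identity displayed just after \eqref{16} for $\dels$, applied here with the roles of $A$ and $A^*$ exchanged: $\dees\Abracket{u}{v}=\Abracket{\dees u}{v}+(-1)^{k+1}\Abracket{u}{\dees v}$ (this is simply the statement that $\del$ generates the Gerstenhaber bracket of $A$ together with $\dees\del+\del\dees=\Delta$ being the relevant combination, and holds with no compatibility hypothesis). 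Adding the two expansions, the pure $(\Delta x)\wedge y$ and $x\wedge(\Delta y)$ contributions assemble on one side, the cross terms involving one $\dees$ and one $\del$ hitting different factors cancel in pairs after using the graded symmetry $\Abracket{a}{b}=-(-1)^{(|a|-1)(|b|-1)}\Abracket{b}{a}$, and what remains is precisely $(-1)^k\big(\dees\Abracket{x}{y}-\Abracket{\dees x}{y}+(-1)^k\Abracket{x}{\dees y}\big)$.

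The main obstacle is purely bookkeeping: the sign tracking in the cross terms is delicate because $\del$ lowers degree, $\dees$ raises it, and \eqref{16} itself carries a $(-1)^k$ prefactor with $k$ the degree of the \emph{first} argument, so each time a factor is created or consumed the relevant exponent shifts. I would organize the computation by writing every term in the normal form ``operator(s) applied to $x$'' $\wedge$ ``operator(s) applied to $y$'', or as a bracket, and comparing coefficients; a convenient check is to first verify the three special cases \eqref{enrollment}, \eqref{interaction}, \eqref{crosscultural} by hand (where $\Delta f=\del\dees f$ since $\del f=0$ for $f\in\CIM$, and $\Abracket{f}{g}=0$, $\Abracket{f}{u}=-\anchor(u)f$ are elementary), which pins down all signs, and then run the general degree-$(k,l)$ computation. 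Once the signs are fixed the general identity follows with no further input, and the three displayed corollaries are immediate specializations: \eqref{enrollment} from $x=f$, $y=g$; \eqref{interaction} from $x=f$, $y=u$; and \eqref{crosscultural} from $x=u$, $y=v$ with $k=1$.
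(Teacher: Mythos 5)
Your overall strategy---expand $\Delta(x\wedge y)=\dees\del(x\wedge y)+\del\dees(x\wedge y)$ and use only the BV relation \eqref{16} together with the fact that $\dees$ is a degree-$1$ derivation of the wedge product---is exactly the computation in the paper, just run in the opposite direction (the paper starts from the bracket side and lands on the Laplacian side). However, one of the ingredients you list is wrong, and it is not a harmless slip. You claim that
\[
\dees\ba{u}{v}=\ba{\dees u}{v}+(-1)^{k+1}\ba{u}{\dees v}
\]
``holds with no compatibility hypothesis,'' justifying it as the identity displayed after \eqref{16} with the roles of $A$ and $A^*$ exchanged. It is not: the identity after \eqref{16} says that $\del$, the generator of the Gerstenhaber bracket of $A$, is a derivation of that same bracket; exchanging $A$ and $A^*$ gives the analogous statement for $\dels$ and $\bas{\cdot}{\cdot}$, not a statement relating $\dees$ to $\ba{\cdot}{\cdot}$. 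The displayed identity above is precisely assertion \ref{a} of Theorem~\ref{Thm:C}, i.e.\ the Lie bialgebroid compatibility itself, and it fails for a general pair of Lie algebroid structures. Worse, if you actually used it to expand the term $\dees\ba{x}{y}$ arising in $\dees\del(x\wedge y)$, the right-hand side of the proposition would collapse to zero, which is false in general and would trivialize the equivalence \ref{a}$\Leftrightarrow$\ref{i} that the proposition is designed to establish.

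The good news is that this ingredient is not needed at all. In the expansion of $\dees\del(x\wedge y)$, apply \eqref{16} to $\del(x\wedge y)$ and then distribute $\dees$ over the two wedge terms only; the term $(-1)^k\dees\ba{x}{y}$ must simply be left untouched, since it is part of the answer you are aiming for. One then finds that the cross terms $\dees x\wedge\del y$ and $\del x\wedge\dees y$ produced by the two halves cancel in pairs with opposite signs---no graded symmetry of the bracket is required either---leaving
\[
\Delta(x\wedge y)=\Delta x\wedge y+x\wedge\Delta y+(-1)^k\big(\dees\ba{x}{y}-\ba{\dees x}{y}+(-1)^k\ba{x}{\dees y}\big),
\]
which is the claimed identity, valid for an arbitrary pair of Lie algebroid structures on $A$ and $A^*$ exactly as you intended. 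With that correction your argument coincides with the paper's proof read from left to right, and the three special cases follow as you say.
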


The Laplacian $\laps$ enjoys similar properties:
\begin{gather}
\label{enrollment2}
\Delta_*(fg)-f\Delta_* g-g\Delta_* f
=-\Asbracket{\dA f,g}+\Asbracket{f,\dA g} ,\\
\label{interaction2} \nonumber
\Delta_*(f\theta)-f\Delta_* \theta -\Delta_* f\ \theta= \dA \Asbracket{f,\theta} -\Asbracket{\dA f,\theta}+\Asbracket{f,\dA\theta}
.\end{gather}

\begin{proof}
Using the relations \eqref{16} and \eqref{g12}, we find
\begin{align*}
&\ds \Abracket{x,y}- \Abracket{\ds x,y}+(-1)^k \Abracket{x,\ds y}\\
=& (-1)^k\ds\big(\partial (x\wedge y)-(\partial x)\wedge
y-(-1)^k x\wedge (\partial y)\big) \\
&+(-1)^k\big(\partial (\ds x\wedge y)-(\partial \ds x)\wedge y+(-1)^k \ds x\wedge (\partial y)\big) \\
&+\big(\partial (x\wedge \ds y)-(\partial x)\wedge \ds y-(-1)^kx\wedge
(\partial \ds y)\big) \\
=&(-1)^k\big(\ds\partial (x\wedge y)-(\ds \partial x)\wedge y+ (-1)^k
(\partial x)\wedge \ds y \\ & -(-1)^k \ds x\wedge (\partial y) -x \wedge (\ds \partial y)\big) \\
&+(-1)^k\big(\partial \ds( x\wedge y)-(\partial \ds x)\wedge y+(-1)^k \ds x\wedge (\partial y)\big) \\ & -(\partial x)\wedge \ds y -(-1)^k x\wedge (\partial \ds y) \\
=&(-1)^k (\Delta (x\wedge y)-(\Delta x) \wedge y-x \wedge (\Delta y)).
\qedhere \end{align*}
\end{proof}

\begin{prop}\label{Pro:supplemented}
For all $u\in\Gamma(A)$ and $\theta\in \Gamma(\As)$, we have
\begin{gather}\label{nomination}
\pairing{\ds\Abracket{f,u}-\Abracket{\ds f,u}+\Abracket{f,\ds u}}{\theta}
=-\pairing{(\LieDer_{\ds f}
+\LieDer_{\dA f})u}{\theta}
=([\LieDer_{u},\LieDer_{\theta}]
-\LieDer_{u \Dorfman{\theta}} )f,
\\ \label{nomination2}
\pairing{u}{\dA\Asbracket{f,\theta}
-\Asbracket{\dA f,\theta}
+\Asbracket{f,\dA\theta}} =-\pairing{u}{(\LieDer_{\ds f}
+\LieDer_{\dA f}) \theta}
=([\LieDer_{\theta},\LieDer_{u}]
-\LieDer_{\theta\Dorfman{u}})f.
\end{gather}
\end{prop}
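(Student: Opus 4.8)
The plan is to verify the two chains of equalities in \eqref{nomination}; equation \eqref{nomination2} will then follow by the symmetric exchange of $A$ and $\As$. Throughout, $f\in\CIM$, $u\in\sections{A}$ and $\theta\in\sections{A^*}$. First I would dispose of the middle equality. Since $\dees f$ and $\dee f$ are the components (in $A$ and $A^*$ respectively) of the derived-bracket object $\DD f$, one has $\ld{\dees f}+\ld{\dee f}=\ld{\db{u}{\theta}}$ evaluated suitably; more directly, I would simply expand both $\Abracket{\ds f,u}$ and $\Asbracket{f,\ds u}$ using the Leibniz rule for the Gerstenhaber brackets of $A$ and $A^*$ together with the fact that, for a function $f$, $\Abracket{f,\cdot}=0$ in the $A$-bracket and $\Abracket{\ds f,\cdot}=\ld{\ds f}(\cdot)$ on $\sections{A}$. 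Pairing with $\theta$ and collecting terms gives
\[
\pairing{\ds\Abracket{f,u}-\Abracket{\ds f,u}+\Abracket{f,\ds u}}{\theta}
= -\pairing{\ld{\ds f}u}{\theta}-\pairing{u}{\ld{\dA f}\theta},
\]
and since $\pairing{u}{\ld{\dA f}\theta}=-\pairing{\ld{\dA f}u}{\theta}+\ld{\anchor(\dees f)+\anchors(\dee f)}\dots$ — wait, more cleanly: because the pairing $\pairing{u}{\theta}$ is a function and $\ld{\dA f}$ acts as a Lie-algebroid Lie derivative, $\pairing{u}{\ld{\dA f}\theta}$ equals $(\anchors(\dA f))\pairing{u}{\theta}-\pairing{\ld{\dA f}u}{\theta}$; but $\pairing{u}{\theta}$ paired against $\dee f$ produces only the $A^*$-side. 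Carefully bookkeeping the anchors makes the two surviving terms coalesce into $-\pairing{(\ld{\ds f}+\ld{\dA f})u}{\theta}$, which is the middle expression.

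Next I would handle the right-hand equality, which identifies the first expression with $([\ld{u},\ld{\theta}]-\ld{u\Dorfman{\theta}})f$ acting on the function $f$. Here I would use that on functions the operators $\ld{u}$, $\ld{\theta}$, $\ld{\db{u}{\theta}}$ are just the directional derivatives along $\anchor(u)$, $\anchors(\theta)$, $\rho(\db{u}{\theta})$. From \eqref{4} one reads off $\db{u}{\theta}=\LieDer_u\theta-\inserts_\theta(\ds u)$, so $\anchors(\db{u}{\theta})$ involves $\anchors(\LieDer_u\theta)$ and $\anchors(\inserts_\theta\ds u)$. The key computational input is the compatibility-free identity expressing $\anchor\circ\dees$ and $\anchors\circ\dee$ in terms of brackets of anchors; concretely, applying $\pairing{\cdot}{\df}$ to $\ds\Abracket{f,u}-\Abracket{\ds f,u}+\Abracket{f,\ds u}$ and rewriting each Lie-algebroid bracket in terms of Lie derivatives and anchors will reproduce exactly $\anchor(u)\anchors(\theta)f-\anchors(\theta)\anchor(u)f-\rho(\db{u}{\theta})f$. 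This is essentially a diagram-chase through the definitions \eqref{4} of $\db{\cdot}{\cdot}$ and the Lie-algebroid differential, with no Jacobi-type or bialgebroid hypothesis used.

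The main obstacle is purely bookkeeping: keeping straight the several sign conventions (the $(-1)^k$ in $\ii{X}$, the degree shifts in $\dees$ on $\sections{\wedge^\bullet A}$, and the fact that $\ds$ raises degree while $\del$ lowers it) while threading three different Lie derivatives — those of $A$, of $A^*$, and the "mixed" $\ld{\db{u}{\theta}}$ — through one identity. I would organize the computation by first establishing the pointwise/$\CIM$-bilinear identity $\pairing{\ds u}{\dee f}=\anchor(u)(\anchors\text{-part})-\dots$ type lemma (which is just the adjointness of $\dee$ and $\ii{u}$), then substituting. A useful sanity check is to test both sides on the special case where $A^*$ has zero anchor and bracket (so $\dee=0$, $\db{u}{\theta}=\LieDer_u\theta$), where every term collapses and the identity reduces to $\ld{\ld{u}\theta}f=[\ld{u},\ld{\theta}]f$, a tautology. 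No step requires anything beyond Lemma~\ref{Lem: chores}, the Leibniz rules for the two Gerstenhaber brackets, and the definition \eqref{4} of the Dorfman bracket.
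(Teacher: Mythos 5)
Your overall strategy --- expand everything directly from the Leibniz rules, the definitions of the two differentials, the Lie derivatives and the Dorfman bracket, with no bialgebroid hypothesis, and obtain \eqref{nomination2} by exchanging $A$ and $\As$ --- is the same as the paper's, and your outline for the right-hand equality (compute $([\ld{u},\ld{\theta}]-\ld{u\Dorfman\theta})f$ through the anchors and formula \eqref{4}) is fine. But the central step as you wrote it would fail. Your displayed intermediate identity
\[
\pairing{\ds\Abracket{f,u}-\Abracket{\ds f,u}+\Abracket{f,\ds u}}{\theta}
= -\pairing{\ld{\ds f}u}{\theta}-\pairing{u}{\ld{\dA f}\theta}
\]
is false. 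Expanding the left-hand side correctly (using $\Abracket{f,u}=-\pairing{u}{\dA f}$, $\Abracket{\ds f,u}=\ld{\ds f}u$, $\pairing{\Abracket{f,\ds u}}{\theta}=-(\ds u)(\dA f,\theta)$, and the evaluation formula $(\ds u)(\dA f,\theta)=\anchors(\dA f)\pairing{u}{\theta}-\anchors(\theta)\pairing{u}{\dA f}-\pairing{u}{\bas{\dA f}{\theta}}$) gives
\[
-\pairing{\ld{\ds f}u}{\theta}-\anchors(\dA f)\pairing{u}{\theta}+\pairing{u}{\bas{\dA f}{\theta}},
\]
which differs from your right-hand side by the anchor term and by the sign of the $\bas{\dA f}{\theta}$ term. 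The repair you sketch does not close the gap either: substituting $\pairing{u}{\ld{\dA f}\theta}=\anchors(\dA f)\pairing{u}{\theta}-\pairing{\ld{\dA f}u}{\theta}$ into your identity produces $+\pairing{\ld{\dA f}u}{\theta}$ together with a leftover anchor term, not the asserted $-\pairing{(\ld{\ds f}+\ld{\dA f})u}{\theta}$, so no amount of ``careful bookkeeping of the anchors'' makes those particular terms coalesce.

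The point your sketch blurs is exactly the one the paper's computation turns on: in the middle expression, $\ld{\dA f}$ acts on $u$ as the $\As$-Lie derivative on the dual module, so $\pairing{\ld{\dA f}u}{\theta}=\anchors(\dA f)\pairing{u}{\theta}-\pairing{u}{\bas{\dA f}{\theta}}$ carries an anchor term, and the way to reach it is to trade $\pairing{\ii{\dA f}\ds u}{\theta}=\pairing{\ds u}{\dA f\wedge\theta}$ for $\ld{\dA f}\pairing{u}{\theta}-\ld{\theta}\pairing{u}{\dA f}-\pairing{u}{\bas{\dA f}{\theta}}$; after that exchange the anchor terms recombine and both outer expressions of \eqref{nomination} reduce to $-\pairing{(\ld{\ds f}+\ld{\dA f})u}{\theta}$, which is precisely the paper's proof. (Also, in your degenerate-case check, killing the structure of $\As$ forces $\ds=0$, not $\dA=0$, and then all three expressions vanish identically; the statement $\ld{\ld{u}\theta}f=[\ld{u},\ld{\theta}]f$ is not a tautology in general.) Redo the expansion keeping the anchor terms and your plan becomes the paper's argument.
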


\begin{proof}
On the one hand, we have
\begin{align*}
& ([\LieDer_{u},\LieDer_{\theta}]-\LieDer_{u \Dorfman{ \theta}} )f \\
=& \LieDer_u \pairing{\ds f}{\theta}-\LieDer_{\theta}
\pairing{u}{\dA f}- \pairing{\ds f}{\LieDer_u \theta}+ \pairing{\ds
u}{\theta\wedge \dA f} \\
=& \pairing{\ld{u}(\dees f)}{\theta}-
\ld{\theta}\pairing{u}{\dee f}-\pairing{\dees u}{\dee f\wedge\theta} \\
=& \pairing{\ld{u}(\dees f)}{\theta}
- \anchors(\theta)\pairing{u}{\dee f}
- \anchors(\dee f)\pairing{u}{\theta}
+ \anchors(\theta)\pairing{u}{\dee f}
+ \pairing{u}{\bas{\dee f}{\theta}} \\
=& -\pairing{(\ld{\dees f}+\ld{\dee f})u}{\theta}
.\end{align*}
And on the other hand, since
\[ \pairing{\ii{\dee f}\dees u}{\theta} =
\pairing{\dees u}{(\dee f)\wedge\theta} =
\ld{\dee f}\pairing{u}{\theta} -
\ld{\theta}\pairing{u}{\dee f} -
\pairing{u}{\ba{\dee f}{\theta}} ,\]
we have
\begin{align*}
& \pairing{\dees\ba{f}{u}-\ba{\dees f}{u}+\ba{f}{\dees u}}{\theta} \\
=& -\ld{\theta}\pairing{u}{\dee f}+\pairing{\theta}{\ba{u}{\dees f}}
-\pairing{\ip{\dee f}\dees u}{\theta} \\
=& -\pairing{\theta}{(\ld{\dees f}+\ld{\dee f})u}
.\qedhere \end{align*}
\end{proof}

\begin{prop}\label{Pro:gubernatorial}
\begin{equation*}\label{precipice}
(\LieDer_{u\Dorfman\theta}-[\LieDer_{u},\LieDer_{\theta}]) \Omega\otimes V
=\big(2\pairing{\ds u}{\dA \theta}
-(\pairing{\Delta u}{\theta}
+\pairing{u}{\Delta_*\theta}
-\Delta_*\pairing{u}{\theta})\big)
\Omega\otimes V
.\end{equation*}
\end{prop}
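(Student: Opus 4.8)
The plan is to compute the left-hand side by hand and then recognize the coefficient. Since $\Omega\otimes V$ is a nowhere vanishing section of the line bundle $\wedge^{n}A^{*}\otimes\wedge^{n}A$, it suffices to produce a function $c\in\cinf{M}$ with $\big(\LieDer_{\db{u}{\theta}}-[\LieDer_{u},\LieDer_{\theta}]\big)\Omega=c\,\Omega$ and to check that $c$ agrees with the coefficient on the right. First I would write $\db{u}{\theta}=-\inserts_{\theta}\dees u+\LieDer_{u}\theta\in\sections{A}\oplus\sections{A^{*}}$ (the formula recorded just before Lemma~\ref{Lem: chores}) and use the convention $\LieDer_{X+\xi}=\LieDer_{X}+\LieDer_{\xi}$. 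By Lemma~\ref{Lem: chores} one has the first-order evaluations $\LieDer_{X}\Omega=-(\partial X)\Omega$ for $X\in\sections{A}$ and $\LieDer_{\xi}\Omega=(\partial_{*}\xi)\Omega$ for $\xi\in\sections{A^{*}}$; composing two such Lie derivatives via the Leibniz rule, with $\LieDer_{u}$ and $\LieDer_{\theta}$ acting on $\cinf{M}$ through $\anchor$ and $\anchors$, the mixed products $\pm(\partial_{*}\theta)(\partial u)\Omega$ cancel in the commutator, and what is left is
\begin{equation*}
\big(\LieDer_{\db{u}{\theta}}-[\LieDer_{u},\LieDer_{\theta}]\big)\Omega=\big(\partial(\inserts_{\theta}\dees u)+\partial_{*}(\LieDer_{u}\theta)-\anchor(u)(\partial_{*}\theta)-\anchors(\theta)(\partial u)\big)\Omega .
\end{equation*}
(Alternatively, one may first rewrite $(\cdots)\Omega\otimes V=-\Omega\otimes(\cdots)V$ by means of \eqref{catalyst}--\eqref{duplicate} of Lemma~\ref{Lem:Pentagon} together with their $u\leftrightarrow\theta$ mirrors, and then carry out the same computation on $V$.)

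It then remains to identify the coefficient above with $2\pairing{\dees u}{\dee\theta}-\pairing{\lap u}{\theta}-\pairing{u}{\laps\theta}+\laps\pairing{u}{\theta}$. Expanding $\lap=\dees\partial+\partial\dees$ and $\laps=\dee\partial_{*}+\partial_{*}\dee$, and using the elementary relations $\pairing{\dees g}{\theta}=\anchors(\theta)g$, $\pairing{u}{\dee g}=\anchor(u)g$, $\laps g=\partial_{*}\dee g$ for $g\in\cinf{M}$, the terms $\anchor(u)(\partial_{*}\theta)$ and $\anchors(\theta)(\partial u)$ occur on both sides and cancel, leaving the identity
\begin{equation*}
\partial(\inserts_{\theta}\dees u)+\partial_{*}(\LieDer_{u}\theta)=2\pairing{\dees u}{\dee\theta}-\inserts_{\theta}(\partial\dees u)-\inserts_{u}(\partial_{*}\dee\theta)+\partial_{*}\dee\pairing{u}{\theta} ,
\end{equation*}
which is to be verified for an arbitrary pair of Lie algebroids.

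Finally Lemma~\ref{Lem:treatises} disposes of this: applying its first identity $\partial\inserts_{\theta}+\inserts_{\theta}\partial=\inserts_{\dee\theta}$ to $\dees u$ yields $\partial(\inserts_{\theta}\dees u)=\pairing{\dees u}{\dee\theta}-\inserts_{\theta}(\partial\dees u)$, while the Cartan formula $\LieDer_{u}\theta=\inserts_{u}\dee\theta+\dee\pairing{u}{\theta}$ combined with its second identity $\partial_{*}\inserts_{u}+\inserts_{u}\partial_{*}=\inserts_{\dees u}$ applied to $\dee\theta$ yields $\partial_{*}(\LieDer_{u}\theta)=\pairing{\dees u}{\dee\theta}-\inserts_{u}(\partial_{*}\dee\theta)+\partial_{*}\dee\pairing{u}{\theta}$; summing the two produces exactly the displayed identity. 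I expect the only delicate point to be the middle step: one must organize the expansion of the two Laplacians so that precisely the ``second-order'' terms $\partial\dees u$ and $\partial_{*}\dee\theta$ surface, since these are exactly what the two halves of Lemma~\ref{Lem:treatises} are designed to absorb. Beyond that, Lemma~\ref{Lem: chores}, and the definitions of $\lap$ and $\laps$, nothing further is required.
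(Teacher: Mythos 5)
Your proposal is correct, and it reaches the proposition by a genuinely more direct route than the paper. The paper proves the identity by first establishing Lemma~\ref{Lem:chronological} (an expression for $\pairing{\Delta u}{\theta}\Omega\otimes V$), invoking its $A\leftrightarrow A^*$ mirror, and then adding, subtracting and simplifying the resulting tensor identities with the help of Lemma~\ref{Lem:Pentagon} --- in particular the second-order relation \eqref{duplicate} --- and of \eqref{exposure}; the whole argument is carried out on the tensor $\Omega\otimes s$-free object $\Omega\otimes V$. You instead exploit the fact that $\wedge^{n}A^*$ is a line bundle, so that every operator in sight sends $\Omega$ to a function times $\Omega$: the first-order evaluations of Lemma~\ref{Lem: chores} plus the Leibniz rules through $\anchor$ and $\anchors$ give the left-hand coefficient (with the correct cancellation of the mixed terms $(\partial u)(\partial_*\theta)$ in the commutator), the expansion of $\lap$, $\laps$ and $\laps\pairing{u}{\theta}=\dels\dee\pairing{u}{\theta}$ gives the right-hand coefficient, and the comparison reduces to the single identity $\partial(\inserts_{\theta}\ds u)+\partial_{*}(\LieDer_{u}\theta)=2\pairing{\ds u}{\dA\theta}-\inserts_{\theta}(\partial\ds u)-\inserts_{u}(\partial_{*}\dA\theta)+\partial_{*}\dA\pairing{u}{\theta}$, which you correctly dispose of with the two halves of Lemma~\ref{Lem:treatises} and the Cartan formula $\LieDer_{u}\theta=\inserts_{u}\dA\theta+\dA\pairing{u}{\theta}$. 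The ingredients (Lemmas~\ref{Lem: chores} and~\ref{Lem:treatises}) are the same as the paper's, but your organization bypasses Lemma~\ref{Lem:Pentagon} and Lemma~\ref{Lem:chronological} entirely and replaces the tensor bookkeeping by a scalar coefficient comparison; the paper's heavier route buys it Lemma~\ref{Lem:chronological} as an independent intermediate result, which it reuses later (e.g.\ in the proof of Proposition~\ref{Pro:prelude}), whereas your argument is self-contained and shorter for this particular statement.
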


We need the following lemma.

\begin{lem}\label{Lem:chronological}For all $u\in \Gamma(A)$ and $\theta\in \Gamma(\As)$,
\begin{equation}\label{continuation}
\pairing{\Delta u}{\theta}\Omega\otimes V
=\pairing{\ds u}{\dA \theta}\Omega\otimes V +\LieDer_{\inserts_\theta\ds u}\Omega\otimes
V-\LieDer_{\theta}\LieDer_{u}\Omega\otimes V
-\LieDer_{u}\Omega\otimes \LieDer_{\theta}V.
\end{equation}
\end{lem}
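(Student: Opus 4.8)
The plan is to expand $\Delta u$ by its definition $\Delta=\ds\partial+\partial\ds$ (using $\ds^{2}=\partial^{2}=0$), so that for $u\in\Gamma(A)$ one gets $\Delta u=\ds(\partial u)+\partial(\ds u)\in\Gamma(A)$, then to pair with $\theta$ and treat the two summands separately. The device I will rely on throughout is the identity $\LieDer_{w}\Omega=-(\partial w)\Omega$ of Lemma~\ref{Lem: chores}, valid for $w\in\Gamma(A)$: it converts the ``divergence'' function $\partial w$ of a section into a Lie derivative of the density $\Omega$, which is exactly the form in which Lemma~\ref{Lem:Pentagon} can be invoked.

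First I would treat the summand $\partial(\ds u)$. Since $\ds u\in\Gamma(\wedge^{2}A)$, Lemma~\ref{Lem:treatises} applied to the $2$-vector $\ds u$ gives $\inserts_{\theta}\partial(\ds u)=\inserts_{\dA\theta}(\ds u)-\partial(\inserts_{\theta}\ds u)$. The left-hand side is the function $\pairing{\partial\ds u}{\theta}$, the first term on the right is $\pairing{\ds u}{\dA\theta}$, and for the last term I would apply $\LieDer_{w}\Omega=-(\partial w)\Omega$ with $w=\inserts_{\theta}\ds u\in\Gamma(A)$ to obtain $-\partial(\inserts_{\theta}\ds u)\,\Omega\otimes V=\LieDer_{\inserts_{\theta}\ds u}\Omega\otimes V$. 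Hence $\pairing{\partial\ds u}{\theta}\,\Omega\otimes V$ produces precisely the $\pairing{\ds u}{\dA\theta}\,\Omega\otimes V$ and $\LieDer_{\inserts_{\theta}\ds u}\Omega\otimes V$ terms of \eqref{continuation}.

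Next I would treat the summand $\ds(\partial u)$. Since $\partial u\in\CIM$, $\pairing{\ds(\partial u)}{\theta}=\anchors(\theta)(\partial u)$, the derivative of the function $\partial u$ along $\anchors(\theta)$. Rewriting $\partial u$ through $(\partial u)\Omega=-\LieDer_{u}\Omega$ and applying $\LieDer_{\theta}$, the Leibniz rule yields $\LieDer_{\theta}\LieDer_{u}\Omega=-\big(\anchors(\theta)(\partial u)\big)\Omega-(\partial u)\LieDer_{\theta}\Omega$, whence $\big(\anchors(\theta)(\partial u)\big)\,\Omega\otimes V=-\LieDer_{\theta}\LieDer_{u}\Omega\otimes V-(\partial u)\LieDer_{\theta}\Omega\otimes V$. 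Then \eqref{capitalization} of Lemma~\ref{Lem:Pentagon} gives $\LieDer_{\theta}\Omega\otimes V=-\Omega\otimes\LieDer_{\theta}V$, and $(\partial u)\Omega=-\LieDer_{u}\Omega$ once more turns $-(\partial u)\LieDer_{\theta}\Omega\otimes V$ into $-\LieDer_{u}\Omega\otimes\LieDer_{\theta}V$. Adding the two contributions reconstitutes \eqref{continuation}.

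I do not expect any conceptual obstacle; the main effort is careful sign- and convention-tracking, the delicate points being the normalization of the contraction $\inserts_{\dA\theta}$ of the $2$-form $\dA\theta$ against the $2$-vector $\ds u$ (so that $\inserts_{\dA\theta}(\ds u)=\pairing{\ds u}{\dA\theta}$ with the intended sign) and keeping the passive factor $V$ straight in the tensor products $\Omega\otimes V$. The content is simply that, once $\partial u$ and $\partial(\inserts_{\theta}\ds u)$ are rewritten as Lie derivatives of $\Omega$ by Lemma~\ref{Lem: chores}, the identity falls out of Lemma~\ref{Lem:Pentagon} and the Leibniz rule.
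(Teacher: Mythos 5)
Your proof is correct and takes essentially the same route as the paper: the same splitting $\Delta u=\partial\ds u+\ds\partial u$, Lemma~\ref{Lem:treatises} applied to the bivector $\ds u$ for the first summand, and Lemma~\ref{Lem: chores} to convert divergences into Lie derivatives of $\Omega$. The only cosmetic difference is that you treat $\LieDer_{\theta}(\partial u)$ by differentiating $(\partial u)\Omega=-\LieDer_{u}\Omega$ and invoking \eqref{capitalization}, whereas the paper differentiates $(\partial u)\,\Omega\otimes V=\Omega\otimes\LieDer_{u}V$ and then uses \eqref{ethnic} and \eqref{duplicate}; both yield the same two terms.
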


\begin{proof}
By Lemma~\ref{Lem:treatises}, one has
\[ \pairing{\partial \ds u}{\theta}= \pairing{\ds u}{\dA \theta} -\partial (\inserts_{\theta} \ds u)
.\]
Therefore
\[ \pairing{\Delta u}{\theta}
=\pairing{\partial \ds u+ \ds \partial u}{\theta}
=\pairing{\ds u}{\dA \theta} -\partial (\inserts_{\theta} \ds u)
+\LieDer_{\theta}(\partial u)
.\]
On the other hand, applying $\LieDer_{\theta}$ to both sides of
$(\partial u) \Omega\otimes V=\Omega\otimes \LieDer_{u} V$ (see Equation~\eqref{ethnic}),
we obtain
\[
\LieDer_{\theta}(\partial u)\Omega \otimes V +(\partial u)\LieDer_{\theta}
\Omega\otimes V +(\partial u) \Omega \otimes \LieDer_{\theta}V
= \LieDer_{\theta}\Omega\otimes \LieDer_{u}V+\Omega\otimes
\LieDer_{\theta}\LieDer_{u}V
.\]
Applying Equation~\eqref{ethnic} again, and then Equation~\eqref{duplicate}, we get
\[ \LieDer_{\theta}(\partial u)\Omega\otimes V
=\Omega\otimes \LieDer_{\theta}\LieDer_{u}V
+\LieDer_{u}\Omega\otimes \LieDer_{\theta}V
= -\LieDer_{u}\Omega\otimes \LieDer_{\theta}V
-\LieDer_{\theta}\LieDer_{u}\Omega\otimes V
.\]
So we have
\begin{multline*}
\pairing{\Delta u}{\theta}\Omega\otimes V=\pairing{\ds u}{\dA
\theta}\Omega\otimes V -\partial (\inserts_{\theta} \ds
u)\Omega\otimes V + \LieDer_{\theta}(\partial
u)\Omega\otimes V \\
= \pairing{\ds u}{\dA \theta}\Omega\otimes V + \LieDer_{
\inserts_{\theta} \ds u}\Omega\otimes V -  \LieDer_{u}\Omega\otimes
\LieDer_{\theta}V -\LieDer_{\theta}\LieDer_{u}\Omega\otimes V
. \qedhere \end{multline*}
\end{proof}

\begin{proof}[Proof of Proposition~\ref{Pro:gubernatorial}]
By the symmetry in the exchange of $A$ and $\As$ in
Lemma~\ref{Lem:chronological}, we get
\begin{equation}\label{continuation2}
\pairing{u}{\Delta_*\theta}\Omega\otimes V =\pairing{\ds u}{\dA\theta}\Omega\otimes V
+\Omega\otimes\LieDer_{\inserts_u\dA\theta} V
-\Omega\otimes\LieDer_u\LieDer_{\theta} V -\LieDer_{u}\Omega\otimes\LieDer_{\theta}V.
\end{equation}
Adding \eqref{continuation} to \eqref{continuation2} and making use of \eqref{duplicate} to simplify,
we obtain:
\begin{eqnarray*}
&& (\pairing{\Delta u}{\theta}
+\pairing{u}{\Delta_*\theta})\Omega\otimes V \\
&=& 2\pairing{\ds u}{\dA\theta}\Omega\otimes V
+\Omega\otimes\LieDer_{\inserts_u\dA\theta} V+ \LieDer_{\inserts_\theta\ds u}\Omega\otimes V
+ \Omega\otimes\LieDer_{\theta}\LieDer_{u} V
- \Omega\otimes\LieDer_{u}\LieDer_{\theta} V \\
&=& 2\pairing{\ds u}{\dA\theta}\Omega\otimes V
+\Omega\otimes\LieDer_{\inserts_u\dA\theta} V-\Omega\otimes\LieDer_{\inserts_\theta\ds u} V + \Omega\otimes [\LieDer_{\theta},\LieDer_{u}] V.
\end{eqnarray*}
We also notice that, by \eqref{exposure},
\[ (\Delta_*\pairing{u}{\theta})\Omega\otimes V
= (\partial_*\dA\inserts_u\theta)\Omega\otimes V
= -\Omega\otimes\LieDer_{\dA\inserts_u \theta} V .\]
So, the subtraction of the last two equations above yields
\begin{eqnarray*}
&&(\pairing{\Delta u}{\theta}+\pairing{u}{\Delta_*
\theta}-\Delta_*\pairing{u}{\theta})\Omega\otimes V
\\  &=&
2\pairing{\ds u}{\dA \theta}\Omega\otimes V + \Omega\otimes
(\LieDer_{u\Dorfman \theta }-[\LieDer_{u},\LieDer_{\theta}])  V
\\  &=&
2\pairing{\ds u}{\dA \theta}\Omega\otimes V - (\LieDer_{u\Dorfman
\theta }-[\LieDer_{u},\LieDer_{\theta}])\Omega\otimes V \quad \text{(by \eqref{catalyst}, \eqref{capitalization} and \eqref{quashed})},
\end{eqnarray*}
as required.
\end{proof}

\begin{prop}\label{Pro:steward} If
\begin{equation}\label{turmoil} \Delta_*\pairing{u}{\theta}=\pairing{\Delta
u}{\theta}+\pairing{u}{\Delta_* \theta},
\end{equation}
holds for all $u\in \Gamma(A)$ and $\theta\in \Gamma(\As)$, then for
any $f\in \CIM$,
\[ \LieDer_{\ds f}+\LieDer_{\dA f}=0
\qquad \text{as a map }
\Gamma(\wedge A)\to\Gamma(\wedge A).
\] \end{prop}

Together with \eqref{ethnic} and \eqref{exposure}, this implies that
\[ (\lap f)\Omega=(\dels\dee f)\Omega=\ld{\dee f}\Omega
=-\ld{\dees f}\Omega=(\del\dees f)\Omega=(\laps f)\Omega ,\]
i.e. $\lap f=\laps f$ for all $f\in\cinf{M}$.
Therefore, \eqref{turmoil} is equivalent to
\begin{equation*}\label{turmoil2} \Delta\pairing{u}{\theta}=\pairing{\Delta
u}{\theta}+\pairing{u}{\Delta_* \theta}.
\end{equation*}

\begin{proof}[Proof of Proposition~\ref{Pro:steward}]
By \eqref{enrollment2}, we have
\begin{align*}
\Delta_*\pairing{u}{f\theta}
=\Delta_*(f\pairing{u}{\theta})
=(\Delta_* f)\pairing{u}{\theta}
+ f\Delta_*\pairing{u}{\theta}
-\Asbracket{{\dA f},{\pairing{u}{\theta}}}
+\Asbracket{{f},{\dA\pairing{u}{\theta}}} \\
=(\Delta_* f)\pairing{u}{\theta}
+ f\Delta_*\pairing{u}{\theta}
-(\LieDer_{\ds f}+\LieDer_{\dA f})
\pairing{u}{\theta}
.\end{align*}
On the other hand, by \eqref{interaction2} and Proposition~\ref{Pro:supplemented}, we have
\begin{eqnarray*}
&& \pairing{\Delta u}{f\theta}
+\pairing{u}{\Delta_*(f\theta)} \\
&=& (\Delta_* f)\pairing{u}{\theta}
+ f\pairing{u}{\Delta_*\theta}
+ f\pairing{\Delta u}{\theta}
+ \pairing{u}{\dA\Asbracket{f,\theta}
-\Asbracket{\dA f,\theta}
+ \Asbracket{f,\dA\theta}} \\
&=& (\Delta_* f)\pairing{u}{\theta}
+ f\pairing{u}{\Delta_*\theta}
+ f\pairing{\Delta u}{\theta}
- \pairing{u}{(\LieDer_{\ds f}+\LieDer_{\dA f}) \theta}
.\end{eqnarray*}
So, if \eqref{turmoil} holds for arbitrary
$u$ and $\theta$, then
\[ (\LieDer_{\ds f}+\LieDer_{\dA f})
\pairing{u}{\theta}=
\pairing{u}{(\LieDer_{\ds f}+\LieDer_{\dA f})  \theta}
.\]
The conclusion follows immediately.
\end{proof}

\begin{prop}\label{Pro:shrugged}
For any $f\in\CIM$,
\begin{multline}\label{Relaxed}
(2\Delta f-(\LieDer_{X_0}+\LieDer_{\xi_0})f)
\Omega\otimes s\otimes V \\
= \Omega\otimes s\otimes (\LieDer_{\dA f} +\LieDer_{\ds f})V-\Omega\otimes
(\LieDer_{\dA f}+\LieDer_{\ds f})s\otimes V.
\end{multline}
\end{prop}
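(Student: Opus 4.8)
The plan is to evaluate both sides of \eqref{Relaxed} against the nowhere-vanishing section $\Omega\otimes s\otimes V$, expanding every Lie derivative by the Leibniz rule and reducing it by means of Lemma~\ref{Lem: chores} together with the defining relations \eqref{17}--\eqref{18} of the modular cocycles. Since $\partial$ and $\partial_*$ lower the degree, $\partial f=\partial_* f=0$ for $f\in\CIM$, so that $\Delta f=\partial(\ds f)$ and $\Delta_* f=\partial_*(\dA f)$; here $\ds f\in\sections{A}$ and $\dA f\in\sections{A^*}$.

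First I would compute the action of $\ds f$ and $\dA f$ on $\Omega$ and on $V$. Lemma~\ref{Lem: chores} applies directly and yields
\[
\LieDer_{\ds f}\Omega=-(\Delta f)\Omega,\quad\LieDer_{\ds f}V=(\Delta f)V,\quad\LieDer_{\dA f}\Omega=(\Delta_* f)\Omega,\quad\LieDer_{\dA f}V=-(\Delta_* f)V .
\]
In particular $(\LieDer_{\dA f}+\LieDer_{\ds f})V=(\Delta f-\Delta_* f)V$, so the first term on the right-hand side of \eqref{Relaxed} equals $(\Delta f-\Delta_* f)\,\Omega\otimes s\otimes V$.

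Next I would turn to the term involving the Lie derivative of $s$. Specialising \eqref{17} to $\theta=\dA f$ and \eqref{18} to $u=\ds f$, and observing that $\duality{X_0}{\dA f}=\anchor(X_0)f=\LieDer_{X_0}f$ and $\duality{\xi_0}{\ds f}=\anchors(\xi_0)f=\LieDer_{\xi_0}f$, one obtains $\LieDer_{\dA f}(\Omega\otimes s)=(\LieDer_{X_0}f)\,\Omega\otimes s$ and $\LieDer_{\ds f}(s\otimes V)=(\LieDer_{\xi_0}f)\,s\otimes V$. Expanding the left-hand sides by the Leibniz rule and substituting the values of $\LieDer_{\dA f}\Omega$ and $\LieDer_{\ds f}V$ found above, one solves for the remaining factors:
\[
\Omega\otimes(\LieDer_{\dA f}s)\otimes V=(\LieDer_{X_0}f-\Delta_* f)\,\Omega\otimes s\otimes V,\qquad\Omega\otimes(\LieDer_{\ds f}s)\otimes V=(\LieDer_{\xi_0}f-\Delta f)\,\Omega\otimes s\otimes V .
\]
Adding these produces the second term on the right-hand side of \eqref{Relaxed}; subtracting it from the first term computed above makes the two $\Delta_* f$ contributions cancel, leaving $\bigl(2\Delta f-(\LieDer_{X_0}+\LieDer_{\xi_0})f\bigr)\,\Omega\otimes s\otimes V$, which is precisely the left-hand side. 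Since $\Omega\otimes s\otimes V$ is nowhere vanishing, this establishes \eqref{Relaxed}.

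The computation is essentially bookkeeping and I foresee no genuine obstacle; the only points that require care are recognising that $\LieDer_{\ds f}$ acts on $\Omega$ and $V$ through the Lie algebroid $A$ but on $s$ through the anchor $\anchor$ --- and symmetrically for $\LieDer_{\dA f}$ --- so that the Leibniz expansions of $\LieDer_{\dA f}(\Omega\otimes s)$ and $\LieDer_{\ds f}(s\otimes V)$ are the correct ones, and keeping careful track of the signs supplied by Lemma~\ref{Lem: chores}.
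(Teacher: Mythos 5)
Your argument is correct and is essentially the paper's own computation run in the opposite direction: both rest on Lemma~\ref{Lem: chores} (equivalently \eqref{capitalization}), the defining relations \eqref{17}--\eqref{18} of $X_0$ and $\xi_0$ applied to $\theta=\dA f$ and $u=\ds f$, and the Leibniz expansion of $\LieDer_{\dA f}(\Omega\otimes s)$ and $\LieDer_{\ds f}(s\otimes V)$. The only cosmetic difference is that you introduce $\Delta_* f$ explicitly and let it cancel, where the paper invokes \eqref{capitalization} to recombine the $\Omega$- and $V$-terms; the content is the same.
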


\begin{proof} The proof is a direct calculation:
\begin{eqnarray*}
&& (2\Delta f-(\LieDer_{X_0}+\LieDer_{\xi_0})f)
\Omega\otimes s\otimes V \\
&=& (2\partial \ds f-\pairing{X_0}{\dA f}
-\pairing{\ds f}{\xi_0}) \Omega\otimes s\otimes V \\
&=& 2\Omega\otimes s\otimes\LieDer_{\ds f}V -\LieDer_{\dA f}(\Omega\otimes s)\otimes V
- \Omega\otimes \LieDer_{\ds f}(s\otimes V) \\
&=& \Omega\otimes s\otimes\LieDer_{\ds f}V -\Omega\otimes(\LieDer_{\dA f}+\LieDer_{\ds f})
s\otimes V -\LieDer_{\dA f}\Omega\otimes s\otimes V
.\end{eqnarray*}
The result follows from \eqref{capitalization}.
\end{proof}

\begin{prop}\label{Pro:prelude}
\begin{multline}\label{postface}
\pairing{2\Delta u
-(\LieDer_{X_0}+\LieDer_{\xi_0})u}{\theta}\Omega\otimes s\otimes V = 2\pairing{\ds u}{\dA \theta}\Omega\otimes s\otimes V \\
+([\LieDer_{u},\LieDer_{\theta}]-\LieDer_{u\Dorfman
\theta})\Omega\otimes s \otimes V + \Omega \otimes
([\LieDer_{u},\LieDer_{\theta}]-\LieDer_{u\Dorfman \theta})s \otimes V.
\end{multline}
\end{prop}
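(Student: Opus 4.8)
The plan is to deduce \eqref{postface} from Proposition~\ref{Pro:gubernatorial} by tensoring in the volume form $s$ and paying for the extra factor with the modular cocycles. Write $D:=[\LieDer_u,\LieDer_\theta]-\LieDer_{u\Dorfman\theta}$ for the operator occurring on the right of \eqref{postface}. Being the difference of the commutator of two derivations and a Lie derivative, $D$ is again a derivation, hence obeys the Leibniz rule over every tensor product; in particular $D(\Omega\otimes V)=(D\Omega)\otimes V+\Omega\otimes(DV)$, so Proposition~\ref{Pro:gubernatorial} reads $(D\Omega)\otimes V+\Omega\otimes(DV)=c\,(\Omega\otimes V)$ with $c:=\pairing{\Delta u}{\theta}+\pairing{u}{\Delta_*\theta}-\Delta_*\pairing{u}{\theta}-2\pairing{\ds u}{\dA\theta}$. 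Tensoring this pointwise identity with $s$ yields
\[ (D\Omega)\otimes s\otimes V+\Omega\otimes s\otimes(DV)=c\,(\Omega\otimes s\otimes V). \]

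First I would use this to reduce \eqref{postface} to a scalar identity: replacing $(D\Omega)\otimes s\otimes V$ on the right of \eqref{postface} by $c\,(\Omega\otimes s\otimes V)-\Omega\otimes s\otimes(DV)$ and noting $2\pairing{\ds u}{\dA\theta}+c=\pairing{\Delta u}{\theta}+\pairing{u}{\Delta_*\theta}-\Delta_*\pairing{u}{\theta}$, the assertion \eqref{postface} becomes equivalent to
\[ \big(\pairing{\Delta u}{\theta}-\pairing{u}{\Delta_*\theta}+\Delta_*\pairing{u}{\theta}-\pairing{(\LieDer_{X_0}+\LieDer_{\xi_0})u}{\theta}\big)\,\Omega\otimes s\otimes V=\Omega\otimes(Ds)\otimes V-\Omega\otimes s\otimes(DV). \]

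Then I would prove this last identity by a direct computation. For the left-hand side: expand the $\Delta$-terms with Lemma~\ref{Lem:chronological}, its $A\leftrightarrow A^*$-symmetric companion \eqref{continuation2}, and the relation $\Delta_*\pairing{u}{\theta}\,\Omega\otimes V=-\Omega\otimes\LieDer_{\dA\inserts_u\theta}V$ from the proof of Proposition~\ref{Pro:gubernatorial}; the Cartan formula $\LieDer_u\theta=\inserts_u\dA\theta+\dA\inserts_u\theta$ together with Lemma~\ref{Lem:Pentagon} (in particular \eqref{catalyst} and \eqref{duplicate}) then collapses all of the surviving Lie-derivative terms. For the right-hand side: by Lemma~\ref{Lem: chores} both $\LieDer_u$ and $\LieDer_\theta$ act on $V$ by multiplication, so $DV$ is a function times $V$; and $Ds=\LieDer_Z s$ where, using $u\Dorfman\theta=-\inserts_\theta\ds u+\LieDer_u\theta$, the vector field is $Z=[\anchor(u),\anchors(\theta)]+\anchor(\inserts_\theta\ds u)-\anchors(\LieDer_u\theta)$. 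The crux is to recognise the modular cocycles here: combining \eqref{18} with \eqref{ethnic} gives $\LieDer_{\anchor(X)}s=(\pairing{\xi_0}{X}-\partial X)\,s$ for $X\in\sections{A}$, and combining \eqref{17} with \eqref{exposure} gives $\LieDer_{\anchors(\eta)}s=(\pairing{X_0}{\eta}-\partial_*\eta)\,s$ for $\eta\in\sections{A^*}$; substituting these, together with the Leibniz expansion $\pairing{(\LieDer_{X_0}+\LieDer_{\xi_0})u}{\theta}=(\LieDer_{X_0}+\LieDer_{\xi_0})\pairing{u}{\theta}-\pairing{u}{(\LieDer_{X_0}+\LieDer_{\xi_0})\theta}$, makes all remaining terms cancel, the partial-derivative terms matching up through Lemma~\ref{Lem:treatises}.

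The main obstacle is precisely this bookkeeping. In contrast with Proposition~\ref{Pro:gubernatorial}, the section $u\Dorfman\theta$ has nonzero components in both $A$ and $A^*$, so $\LieDer_{u\Dorfman\theta}$ acts simultaneously on all three tensor factors $\Omega$, $s$ and $V$; one must track carefully how $[\LieDer_u,\LieDer_\theta]$ and $\LieDer_{u\Dorfman\theta}$ distribute over these factors, and then convert the $\anchor$- and $\anchors$-divergences of $[\anchor(u),\anchors(\theta)]$, $\anchor(\inserts_\theta\ds u)$ and $\anchors(\LieDer_u\theta)$ against $s$ into modular-cocycle pairings via \eqref{17}--\eqref{18}. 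Once the scalar identity of the second step is verified in this way, \eqref{postface} follows immediately.
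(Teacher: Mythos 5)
Your reduction rests on a misreading of Proposition~\ref{Pro:gubernatorial}. By the notational convention stated just before Lemma~\ref{Lem:Pentagon} (``$\ld{v}A\otimes B$ means $(\ld{v}A)\otimes B$''), the operator $\LieDer_{u\Dorfman\theta}-[\LieDer_u,\LieDer_\theta]$ in that proposition acts on the factor $\Omega$ alone, not on $\Omega\otimes V$ by the Leibniz rule; the last line of its proof, where $\Omega\otimes(\LieDer_{u\Dorfman\theta}-[\LieDer_u,\LieDer_\theta])V$ is converted into $-(\LieDer_{u\Dorfman\theta}-[\LieDer_u,\LieDer_\theta])\Omega\otimes V$ via \eqref{catalyst}, \eqref{capitalization} and \eqref{quashed}, makes this explicit. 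Writing $D=[\LieDer_u,\LieDer_\theta]-\LieDer_{u\Dorfman\theta}$ and $c=\pairing{\Delta u}{\theta}+\pairing{u}{\Delta_*\theta}-\Delta_*\pairing{u}{\theta}-2\pairing{\ds u}{\dA\theta}$, the proposition says $(D\Omega)\otimes V=c\,\Omega\otimes V$, whereas those same identities give $(D\Omega)\otimes V+\Omega\otimes(DV)=0$ identically. Hence your version ``$(D\Omega)\otimes V+\Omega\otimes(DV)=c\,\Omega\otimes V$'' would force $c=0$ for an arbitrary pair of Lie algebroid structures, which is false (even for the Lie bialgebroid of a Poisson manifold one only gets $c=-2\pairing{\ds u}{\dA\theta}$, generically nonzero).

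The error propagates: since $\Omega\otimes s\otimes(DV)=-c\,\Omega\otimes s\otimes V$, the ``equivalent scalar identity'' you reduce \eqref{postface} to differs from the correct one by the term $c\,\Omega\otimes s\otimes V$, so it is false in general and the direct computation you sketch cannot close. The correct reduction, using $(D\Omega)\otimes s\otimes V=c\,\Omega\otimes s\otimes V$, is to
$\big(\pairing{\Delta u}{\theta}-\pairing{u}{\Delta_*\theta}+\Delta_*\pairing{u}{\theta}-\pairing{(\LieDer_{X_0}+\LieDer_{\xi_0})u}{\theta}\big)\Omega\otimes s\otimes V=\Omega\otimes(Ds)\otimes V$,
and proving this still requires precisely the modular-cocycle bookkeeping you postpone to the end; that bookkeeping is the content of Lemma~\ref{ChenZhuo} (Equation \eqref{simultaneous}), which your proposal never invokes. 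By contrast, the paper obtains \eqref{postface} at once: double Lemma~\ref{Lem:chronological} (Equation \eqref{continuation}), tensor $s$ into the middle, subtract \eqref{simultaneous}, and use $u\Dorfman\theta=-\inserts_\theta\ds u+\LieDer_u\theta$ to recombine the terms acting on $\Omega$ into $([\LieDer_u,\LieDer_\theta]-\LieDer_{u\Dorfman\theta})\Omega$. So the missing ingredient in your argument is exactly Lemma~\ref{ChenZhuo}, and the appeal to Proposition~\ref{Pro:gubernatorial} as you state it does not replace it.
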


We will need two lemmas.

\begin{lem}\label{Lem:intermediate}
For all $u\in\Gamma(A)$ and $\theta\in\Gamma(\As)$,
\begin{multline}\label{grants}
\pairing{u}{\xi_0}\Omega\otimes \LieDer_{\theta}(s\otimes V)
-\pairing{X_0}{\theta}\LieDer_{u}
(\Omega\otimes s)\otimes
V \\ =2(\Omega\otimes \LieDer_{\theta}s\otimes
\LieDer_{u}V- \LieDer_{\theta}\Omega\otimes \LieDer_{u}s\otimes V)
.\end{multline}
\end{lem}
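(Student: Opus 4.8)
The plan is to prove Lemma~\ref{Lem:intermediate} by a direct computation, expanding the Lie derivatives on the tensor factors and using the defining relations \eqref{17}--\eqref{18} for the modular cocycles together with Lemma~\ref{Lem: chores} and Lemma~\ref{Lem:Pentagon}. First I would rewrite the left-hand side by distributing the Lie derivatives: since $\ld{\theta}(s\otimes V)=\ld{\anchors(\theta)}s\otimes V+s\otimes\ld{\theta}V$ and $\ld{u}(\Omega\otimes s)=\ld{u}\Omega\otimes s+\Omega\otimes\ld{\anchor(u)}s$, the left-hand side of \eqref{grants} becomes
\begin{multline*}
\pairing{u}{\xi_0}\big(\ld{\theta}s\otimes V+s\otimes\ld{\theta}V\big)\otimes(\text{factor }\Omega\text{ in front}) \\
-\pairing{X_0}{\theta}\big(\ld{u}\Omega\otimes s+\Omega\otimes\ld{u}s\big)\otimes V,
\end{multline*}
where I am writing $\ld{\theta}s$ for $\ld{\anchors(\theta)}s$ and $\ld{u}s$ for $\ld{\anchor(u)}s$, consistent with the paper's conventions. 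Now I use the modular cocycle definitions: from \eqref{18}, $\pairing{u}{\xi_0}\,s\otimes V=\ld{u}s\otimes V+s\otimes\ld{u}V$, i.e. $\pairing{u}{\xi_0}\,s\otimes V-\ld{u}s\otimes V=s\otimes\ld{u}V$; and from \eqref{17}, $\pairing{X_0}{\theta}\,\Omega\otimes s=\ld{\theta}\Omega\otimes s+\Omega\otimes\ld{\theta}s$, so $\pairing{X_0}{\theta}\,\Omega\otimes s-\Omega\otimes\ld{\theta}s=\ld{\theta}\Omega\otimes s$.

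The key manipulation is then to isolate the terms carrying a $\pairing{u}{\xi_0}$ or a $\pairing{X_0}{\theta}$ and substitute the two identities above. Concretely, I would tensor the first modular identity on the left with $\Omega$ (kept as a front factor throughout) after applying $\ld{\theta}$, but more cleanly: multiply the $A^*$-modular relation by $\ld{\theta}(\cdot)$ judiciously. The cleanest route is to observe that $\pairing{u}{\xi_0}\,\Omega\otimes\ld{\theta}(s\otimes V)$ expands using $\ld{\theta}(s\otimes V)=\ld{\theta}s\otimes V+s\otimes\ld{\theta}V$, and similarly for the second term; then the mixed ``cross'' terms that contain neither $\xi_0$ nor $X_0$ are exactly $\Omega\otimes\ld{\theta}s\otimes\ld{u}V$-type expressions, whereas the remaining terms are handled by substituting \eqref{17} and \eqref{18} and then invoking \eqref{catalyst}, \eqref{capitalization} to convert $\ld{u}\Omega$ into $-\ld{u}V$ (up to the tensor slots) and similarly for $\theta$. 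After all substitutions, the terms proportional to $\ld{u}\Omega\otimes s\otimes V$ and $\Omega\otimes s\otimes\ld{u}V$ and their $\theta$-analogues should cancel in pairs (this is where \eqref{catalyst} and \eqref{capitalization} do the work), leaving precisely the right-hand side $2(\Omega\otimes\ld{\theta}s\otimes\ld{u}V-\ld{\theta}\Omega\otimes\ld{u}s\otimes V)$.

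The step I expect to be the main obstacle is bookkeeping: there are three tensor slots ($\wedge^n A^*$, $\wedge^m T^*M$, $\wedge^n A$), two sections ($u$, $\theta$) each acting by a Lie derivative on two of the slots, and sign-free but numerous cancellations, so the danger is simply losing track of which $\ld{}$ acts on which factor and of the coefficients $2$ that appear. To control this I would fix once and for all the convention (already stated in the paper's \texttt{notation*} block) that $\ld{v}A\otimes B$ means $(\ld{v}A)\otimes B$, and carry out the expansion in two symmetric halves --- the ``$u$-half'' coming from the $\xi_0$ term and the ``$\theta$-half'' coming from the $X_0$ term --- each reduced using one of \eqref{17}/\eqref{18} and one of \eqref{catalyst}/\eqref{capitalization}, then add. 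A useful sanity check at the end is to contract with $V^\sharp$ or evaluate against the dual pairing to see that both sides transform the same way under rescaling $\Omega\mapsto g\Omega$, $V\mapsto g^{-1}V$, $s\mapsto hs$; the identity is manifestly compatible with such rescalings since the modular cocycles shift by exact terms that match on both sides.
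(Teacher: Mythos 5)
Your proposal is correct and follows essentially the same route as the paper's proof: a direct expansion of the two Lie derivatives on the tensor factors, combined with the modular-cocycle identities \eqref{17}--\eqref{18} and the relations \eqref{catalyst}--\eqref{capitalization} of Lemma~\ref{Lem:Pentagon}. The paper handles the bookkeeping you flag by reducing each of the two terms on the left of \eqref{grants} separately to $\pairing{u}{\xi_0}\pairing{X_0}{\theta}\,\Omega\otimes s\otimes V$ minus two cross terms and then subtracting, which is exactly the cancellation pattern you describe.
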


\begin{proof}
We notice that
\begin{eqnarray*}
&&\pairing{u}{\xi_0}\Omega\otimes \LieDer_{\theta}(s\otimes V) \\
&=& \pairing{u}{\xi_0}(\Omega\otimes \LieDer_{\theta}s\otimes V+
\Omega\otimes  s\otimes \LieDer_{\theta}V) \\
&=& \pairing{u}{\xi_0}(\Omega\otimes \LieDer_{\theta}s\otimes V-
\LieDer_{\theta}\Omega\otimes s\otimes V) \\
&=& \pairing{u}{\xi_0}(\LieDer_{\theta}
(\Omega\otimes s)\otimes V-
2\LieDer_{\theta}\Omega\otimes s\otimes V) \\
&=& \pairing{u}{\xi_0}\pairing{X_0}{\theta}
\Omega\otimes s\otimes V-
2\LieDer_{\theta}\Omega\otimes\LieDer_{u}
(s\otimes V)
\quad \text{(by definition of $\xi_0$)} \\
&=& \pairing{u}{\xi_0}\pairing{X_0}{\theta}
\Omega\otimes s\otimes V-
2\LieDer_{\theta}\Omega\otimes\LieDer_{u}
s\otimes V -2
\LieDer_{\theta}\Omega \otimes  s\otimes \LieDer_{u}V.
\end{eqnarray*}
For the same reasons we have
\begin{multline*}
\pairing{X_0}{\theta}\LieDer_{u}(\Omega\otimes  s)\otimes V
= \pairing{u}{\xi_0}\pairing{X_0}{\theta}
\Omega\otimes s\otimes V -2\Omega\otimes \LieDer_{\theta}s\otimes\LieDer_{u}V \\
-2\LieDer_{\theta}\Omega\otimes s\otimes \LieDer_{u}V
.\end{multline*}
The subtraction of these two equalities yields the result.
\end{proof}

\begin{lem}\label{ChenZhuo}
For all $u\in\Gamma(A)$ and $\theta\in\Gamma(\As)$,
\begin{multline}\label{simultaneous}
\pairing{(\LieDer_{X_0}+\LieDer_{\xi_0})u}{\theta}
\Omega\otimes s\otimes V \\
= \LieDer_{\inserts_\theta\ds u}\Omega\otimes s\otimes V +\LieDer_{\LieDer_{u}\theta}\Omega\otimes
s\otimes V -\LieDer_{u}\LieDer_{\theta}\Omega\otimes
s\otimes V -\LieDer_{\theta}\LieDer_{u}\Omega\otimes s\otimes V \\ - 2\LieDer_{u}\Omega\otimes s\otimes \LieDer_{\theta}V +\Omega\otimes (\LieDer_{u\Dorfman{\theta}}
-[\LieDer_{u},\LieDer_{\theta}])s\otimes V.
\end{multline}
\end{lem}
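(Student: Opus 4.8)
The plan is to compute the two contributions $\pairing{\LieDer_{X_0}u}{\theta}$ and $\pairing{\LieDer_{\xi_0}u}{\theta}$ separately, each by differentiating the defining relation of the corresponding modular cocycle, then to add them and eliminate the ``auxiliary'' terms (those proportional to the functions $\pairing{X_0}{\theta}$ and $\pairing{\xi_0}{u}$) by means of Lemma~\ref{Lem:intermediate}.

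For the first contribution, I would apply the $A$-Lie derivative $\LieDer_u$ to the identity $\LieDer_\theta(\Omega\otimes s)=\pairing{X_0}{\theta}\,\Omega\otimes s$ of \eqref{17}. Expanding by the Leibniz rule, using $\ba{u}{X_0}=-\LieDer_{X_0}u$, and re-applying \eqref{17} to the resulting term $\pairing{X_0}{\LieDer_u\theta}\,\Omega\otimes s$, one obtains, after tensoring with $V$,
\[ \pairing{\LieDer_{X_0}u}{\theta}\,\Omega\otimes s\otimes V=-\LieDer_u\LieDer_\theta(\Omega\otimes s)\otimes V+\LieDer_{\LieDer_u\theta}(\Omega\otimes s)\otimes V+\pairing{X_0}{\theta}\,\LieDer_u(\Omega\otimes s)\otimes V. \]
For the second contribution, I would use the Cartan formula $\LieDer_{\xi_0}u=\inserts_{\xi_0}\ds u+\ds\pairing{\xi_0}{u}$ for the action of the Lie algebroid $A^*$ on $\sections{\wedge A}$ to write $\pairing{\LieDer_{\xi_0}u}{\theta}=-\pairing{\inserts_\theta\ds u}{\xi_0}+\anchors(\theta)\pairing{\xi_0}{u}$, and then apply \eqref{18} to the section $\inserts_\theta\ds u\in\sections{A}$ and, after one application of $\LieDer_\theta$, to $u$; this yields
\[ \pairing{\LieDer_{\xi_0}u}{\theta}\,\Omega\otimes s\otimes V=-\Omega\otimes\LieDer_{\inserts_\theta\ds u}(s\otimes V)+\Omega\otimes\LieDer_\theta\LieDer_u(s\otimes V)-\pairing{\xi_0}{u}\,\Omega\otimes\LieDer_\theta(s\otimes V). \]

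Adding the two displays, the auxiliary terms combine into exactly the left-hand side of \eqref{grants}, so Lemma~\ref{Lem:intermediate} lets me replace $\pairing{X_0}{\theta}\,\LieDer_u(\Omega\otimes s)\otimes V-\pairing{\xi_0}{u}\,\Omega\otimes\LieDer_\theta(s\otimes V)$ by $-2\,\Omega\otimes\LieDer_\theta s\otimes\LieDer_u V+2\,\LieDer_\theta\Omega\otimes\LieDer_u s\otimes V$. What then remains is purely formal: expand each of $\LieDer_u\LieDer_\theta(\Omega\otimes s)\otimes V$, $\LieDer_{\LieDer_u\theta}(\Omega\otimes s)\otimes V$, $\Omega\otimes\LieDer_{\inserts_\theta\ds u}(s\otimes V)$ and $\Omega\otimes\LieDer_\theta\LieDer_u(s\otimes V)$ into its pieces under the Leibniz rule for the triple tensor product $\Omega\otimes s\otimes V$; recognize $-\inserts_\theta\ds u+\LieDer_u\theta$ as the Dorfman bracket $u\Dorfman\theta$ of \eqref{4}, which assembles the base-form derivatives into the term $\Omega\otimes(\LieDer_{u\Dorfman\theta}-[\LieDer_u,\LieDer_\theta])s\otimes V$ of \eqref{simultaneous}; and use the identities \eqref{catalyst}--\eqref{duplicate} of Lemma~\ref{Lem:Pentagon} (in particular $\LieDer_w\Omega\otimes V=-\Omega\otimes\LieDer_w V$, for $w$ a section of $A$ or $A^*$, to move derivatives between the $\Omega$- and $V$-slots) to cancel all remaining mismatched terms. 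One checks that the single leftover identity is precisely \eqref{duplicate}, which closes the argument.

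The difficulty here is entirely one of bookkeeping, not of ideas. One must keep carefully distinct the several operations all written $\LieDer_{\bullet}$ --- the $A$-action on $\sections{\wedge^\bullet A^*}$ (controlled by \eqref{ethnic}), the $A^*$-action on $\sections{\wedge^\bullet A^*}$, the $A$- and $A^*$-actions on $\sections{\wedge^\bullet A}$, and the ordinary Lie derivatives acting on $\sections{\wedge^m T^*M}$ --- record which of the three tensor slots each operator hits, and respect the sign conventions in the contraction/wedge identities (for instance $\pairing{\inserts_{\xi_0}\ds u}{\theta}=-\pairing{\inserts_\theta\ds u}{\xi_0}$). Lemma~\ref{Lem:Pentagon} is precisely the tool that legitimizes the slot-shuffling, and Lemma~\ref{Lem:intermediate} is what makes the two modular-cocycle contributions recombine.
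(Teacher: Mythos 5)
Your proposal is correct and follows essentially the same route as the paper's proof: differentiate the modular-cocycle identities \eqref{17} and \eqref{18} along $\LieDer_u$ and $\LieDer_\theta$, eliminate the cross terms $\pairing{X_0}{\theta}\LieDer_u(\Omega\otimes s)\otimes V$ and $\pairing{\xi_0}{u}\,\Omega\otimes\LieDer_\theta(s\otimes V)$ via Lemma~\ref{Lem:intermediate}, and close the bookkeeping with Lemma~\ref{Lem:Pentagon}, the last leftover being exactly \eqref{duplicate} (I checked that your final reduction does go through). The only cosmetic difference is that you split $\pairing{\LieDer_{\xi_0}u}{\theta}$ by the Cartan formula as $-\pairing{\inserts_\theta\ds u}{\xi_0}+\anchors(\theta)\pairing{\xi_0}{u}$, whereas the paper writes it as $\pairing{\xi_0}{\ds\pairing{u}{\theta}}+\pairing{u}{\Asbracket{\theta,\xi_0}}$, so the Dorfman term assembles slightly differently but equivalently.
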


\begin{proof}
Applying $\LieDer_{u}$ to both sides of
$\pairing{X_0}{\theta}\Omega\otimes s=
\LieDer_{\theta}(\Omega\otimes s)$, we get
\begin{align*}
\pairing{\Abracket{u,X_0}}{\theta}\Omega\otimes s
=& \LieDer_{u}\LieDer_{\theta}(\Omega\otimes s)
-\pairing{X_0}{\theta}\LieDer_{u}(\Omega\otimes s)
-\pairing{X_0}{\LieDer_{u}\theta} \Omega\otimes s \\
=& \LieDer_u\LieDer_{\theta}\Omega\otimes
s+\LieDer_{\theta}\Omega\otimes \LieDer_u s + \LieDer_u\Omega\otimes
\LieDer_{\theta} s+\Omega\otimes \LieDer_u\LieDer_{\theta}s \\
& -\pairing{X_0}{\theta}\LieDer_u\Omega\otimes s -
\pairing{X_0}{\theta}\Omega\otimes \LieDer_us-
\LieDer_{\LieDer_u\theta}(\Omega\otimes s)
.\end{align*}
By the symmetry in the exchange of $A$ and $\As$, we get
\begin{align*}
\pairing{u}{\Asbracket{\theta,\xi_0}}s\otimes V
=& \LieDer_{\theta}\LieDer_u s\otimes V + \LieDer_{\theta}s\otimes
\LieDer_{u} V + \LieDer_u s \otimes
\LieDer_{\theta} V + s\otimes \LieDer_{\theta}\LieDer_u V \\
&\quad -\pairing{\xi_0}{u} s\otimes \LieDer_{\theta}V -
\pairing{\xi_0}{u}\LieDer_{\theta}s\otimes V-
\LieDer_{\LieDer_\theta u}(s\otimes V)
.\end{align*}
Therefore, one has
\begin{eqnarray*}
&& \pairing{(\LieDer_{X_0}+\LieDer_{\xi_0})u}{\theta}
\Omega\otimes s\otimes V \\
&=& (-\pairing{\Abracket{u,X_0}}{\theta}
+\pairing{\xi_0}{\ds\pairing{u}{\theta}}
+\pairing{u}{\Asbracket{\theta,\xi_0}})
\Omega\otimes s\otimes V \\
&=& \Omega\otimes \LieDer_{\ds\pairing{u}{\theta}} (s\otimes V) -\LieDer_{u}\LieDer_{\theta}
\Omega\otimes s\otimes V + \Omega\otimes s\otimes
\LieDer_{\theta}\LieDer_{u}V \\
&& + \Omega\otimes
([\LieDer_{\theta},\LieDer_u]+\LieDer_{\LieDer_u \theta}
-\LieDer_{\LieDer_\theta u})s\otimes V -\LieDer_{\theta}\Omega\otimes \LieDer_{u}s\otimes V
+\Omega\otimes \LieDer_{u}s\otimes \LieDer_{\theta}V \\
&& -\LieDer_{u}\Omega\otimes\LieDer_{\theta}s\otimes V +\Omega\otimes\LieDer_{\theta}s\otimes\LieDer_{u}V -\pairing{u}{\xi_0}\Omega\otimes\LieDer_{\theta}
(s\otimes V) \\ && +\pairing{X_0}{\theta}\LieDer_{u}
(\Omega\otimes s)\otimes V +
\LieDer_{\LieDer_u\theta}\Omega\otimes s\otimes V -\Omega\otimes s\otimes\LieDer_{\LieDer_\theta u}V \\
&=& - \LieDer_{u}\LieDer_{\theta}\Omega\otimes s\otimes V + \Omega\otimes s\otimes\LieDer_{\theta}\LieDer_{u}V  -\pairing{u}{\xi_0}\Omega\otimes\LieDer_{\theta}
(s\otimes V) \\
&& +\pairing{X_0}{\theta}\LieDer_{u}
(\Omega\otimes s)\otimes V +
2(\Omega\otimes \LieDer_{\theta}s\otimes \LieDer_{u}V-
\LieDer_{\theta}\Omega\otimes \LieDer_{u}s\otimes V) \\
&& +\Omega\otimes ([\LieDer_{\theta},\LieDer_u]+\LieDer_{u\Dorfman
\theta})s\otimes V+ \LieDer_{\LieDer_u\theta}\Omega\otimes s\otimes
V+\LieDer_{\inserts_\theta \ds u} \Omega\otimes s\otimes V
.\end{eqnarray*}
Then \eqref{simultaneous} follows immediately from \eqref{duplicate} and \eqref{grants}.
\end{proof}

\begin{proof}[Proof of Proposition~\ref{Pro:prelude}]
This is a direct consequence of \eqref{continuation} and
\eqref{simultaneous}.
\end{proof}

\section{Proof of the main theorem}
\label{MainProofs}

\begin{lem}\label{Lem:prejudice}
If $(A,\As)$ is a Lie bialgebroid, then
\begin{equation*}\label{prejudice}
\pairing{X}{(\LieDer_{u\Dorfman \theta}-[\LieDer_u,\LieDer_{\theta}]
)\xi}= \pairing{\inserts_{\xi} \ds u}{\inserts_X \dA\theta},
\end{equation*}
for all $u,X\in \Gamma(A)$ and $\xi,\theta\in \Gamma(\As)$.
\end{lem}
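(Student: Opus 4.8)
The plan is to unwind the expression $\ld{u\Dorfman\theta}-\lb{\ld u}{\ld\theta}$ acting on sections of $\As$ and recognize it, under the bialgebroid hypothesis, as the contraction $\ii\xi\dees u$ paired against $\ii X\dee\theta$. The natural framework is the double Courant algebroid of the Lie bialgebroid $(A,\As)$ (Theorem~\ref{Thm:double}), where $\db{u}{\theta}=-\ii\theta(\dees u)+\ld u\theta$ and the Dorfman bracket satisfies the Leibniz-type and Jacobi-type identities \eqref{g1}--\eqref{g4}. First I would compute $\ld{u\Dorfman\theta}\xi=\bas{\db u\theta}{\xi}$ directly from \eqref{4}, splitting $\db u\theta$ into its $A$-part (which is $\ld u$ acting, an element paired via $\ld{\cdot}$ on $\As$) and its $\As$-part $\bas{\cdot}{\cdot}$; simultaneously I would expand $\lb{\ld u}{\ld\theta}\xi=\ld u(\bas\theta\xi)-\bas\theta{\ld u\xi}$ using that $\ld u$ on $\sections{\As}$ is $\ld u=\ii u\dee+\dee\ii u$ and that in a Lie bialgebroid $\dee$ is a derivation of the Gerstenhaber bracket $\bas{\cdot}{\cdot}$ (the defining compatibility, which is assertion~\ref{b}-type with one slot a function, but here in its full bracket form).

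The key technical step is the bialgebroid compatibility: since $\dees$ is a derivation of $\ba{\cdot}{\cdot}$ and, dually, $\dee$ is a derivation of $\bas{\cdot}{\cdot}$, the "mixed" terms that obstruct the naive Leibniz rule for $\ld{u\Dorfman\theta}$ reorganize precisely into a contraction term. Concretely, I expect the difference $\ld{u\Dorfman\theta}-\lb{\ld u}{\ld\theta}$, applied to $\xi$ and paired with $X\in\sections A$, to collapse — after using the derivation property of $\dee$ and the identity $\db u\theta+\db\theta u=2\DD\ip u\theta$ — to the single term $\duality{\ii\xi\dees u}{\ii X\dee\theta}$. The cleanest route is probably: evaluate $\duality X{(\ld{u\Dorfman\theta}-\lb{\ld u}{\ld\theta})\xi}$, move all derivations onto $\dee\theta$ and $\dees u$ using $\anchors$-Leibniz rules, and check that every term not of the form $\duality{\ii\xi\dees u}{\ii X\dee\theta}$ cancels. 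Alternatively, since both sides are $\cinf M$-bilinear in $(X,\xi)$ once $u,\theta$ are fixed (the operator $\ld{u\Dorfman\theta}-\lb{\ld u}{\ld\theta}$ is $\cinf M$-linear, which is part of assertion~\ref{c}), it suffices to verify the identity on a local frame and, by further $\cinf M$-linearity considerations, even to reduce $u$ and $\theta$ to frame sections with closed or convenient local expressions.

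The main obstacle I anticipate is bookkeeping: correctly tracking the anchor terms $\anchors(\theta)$, $\anchors(\dee\theta)$, $\anchor(u)$, $\anchor(\dees u)$ that appear when Lie derivatives hit functions like $\duality u\theta$, and ensuring that the asymmetry between the $A$- and $\As$-contributions to the Dorfman bracket is handled with the right signs. A secondary subtlety is that this lemma does \emph{not} symmetrize in $(u,\theta)\leftrightarrow(X,\xi)$ — the right-hand side $\duality{\ii\xi\dees u}{\ii X\dee\theta}$ is genuinely "off-diagonal" — so I must resist prematurely using $\db u\theta+\db\theta u=2\DD\ip u\theta$ in a way that loses information; it should be invoked only to eliminate a $\DD$-exact term, whose $\As$-action via $\bas{\cdot}{\cdot}$ on $\xi$ vanishes by \eqref{g4}-type reasoning on the $\As$ side. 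Once the computation is organized around "push all derivatives onto $\dees u$ and $\dee\theta$," the identity should fall out in a few lines, and it will feed directly into the trace computation of assertion~\ref{c} (summing over a frame, $\sum_i \duality{\ii{e^i}\dees u}{\ii{e_i}\dee\theta}=2\duality{\dees u}{\dee\theta}$, up to the universal factor).
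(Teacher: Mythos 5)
Your proposal is a strategy outline rather than a proof: the decisive step --- actually producing the right-hand side $\pairing{\inserts_{\xi}\ds u}{\inserts_X \dA\theta}$ --- is never carried out; you only state that you ``expect'' the mixed terms to collapse to it, and you yourself flag the bookkeeping as the main obstacle. That collapse is the entire content of the lemma. The paper's proof rests on one observation your sketch does not contain: for every $W\in\sections{A\oplus\As}$ one has $\pairing{X}{\LieDer_W\xi}=2\ip{X}{\db{W}{\xi}}$, because pairing with $X\in\sections{A}$ only sees the $\As$-component of $\db{W}{\xi}$. Granting this, $\pairing{X}{\LieDer_{u\Dorfman\theta}\xi}=2\ip{X}{\db{(\db{u}{\theta})}{\xi}}$, the Dorfman Jacobi identity \eqref{g1} of the double (this is exactly where the Lie bialgebroid hypothesis is used) splits it into $2\ip{X}{\db{u}{(\db{\theta}{\xi})}}-2\ip{X}{\db{\theta}{(\db{u}{\xi})}}$, and the invariance axiom $\rho(\theta)\ip{X}{y}=\ip{\db{\theta}{X}}{y}+\ip{X}{\db{\theta}{y}}$ applied to $y=\db{u}{\xi}=\LieDer_u\xi-\inserts_{\xi}\ds u$ produces precisely the correction term, since the $\As$-part of $\db{\theta}{X}$ is $-\inserts_X\dA\theta$, so that $\ip{\db{\theta}{X}}{\inserts_{\xi}\ds u}=-\thalf\pairing{\inserts_{\xi}\ds u}{\inserts_X\dA\theta}$. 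Your alternative route (expand everything from \eqref{4}, write $\LieDer_u=\inserts_u\dA+\dA\inserts_u$, and use that $\dA$ is a derivation of $\bas{\cdot}{\cdot}$) may well be workable, but as written it is an unexecuted computation, so the lemma is not established.

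Two further points would need repair even within your plan. First, the proposed reduction ``to frame sections'' in $u$ and $\theta$ is not available: both sides are $\cinf{M}$-bilinear in $(X,\xi)$ (granted the bialgebroid hypothesis, via \eqref{nomination} and assertion \ref{a}; invoking assertion \ref{c} itself would be circular, since this lemma feeds into its proof), but neither side is tensorial in $u$ or $\theta$ --- they involve $\ds u$ and $\dA\theta$ --- so restricting $u,\theta$ to frame sections loses information unless you also track the transformation of both sides under $u\mapsto fu$, which you do not do. Second, the claim that the $\DD$-exact term's action on $\xi$ ``vanishes by \eqref{g4}-type reasoning on the $\As$ side'' is incorrect as stated: $\LieDer_{\dA f}\xi=\bas{\dA f}{\xi}$ does not vanish by itself; what is true is that the full Dorfman action of $\DD f=\ds f+\dA f$ vanishes, equivalently $\LieDer_{\ds f}+\LieDer_{\dA f}=0$, and this is itself a consequence of the bialgebroid condition (compare Proposition~\ref{Pro:steward} and \eqref{cry}), not of the symmetrization identity alone.
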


\begin{proof}
From the definition of the Dorfman bracket, it follows that
\begin{equation*}
\pairing{X}{\LieDer_{W}\xi}
=2\ppairing{X,\LieDer_W\xi}
=2\ppairing{X,W\Dorfman\xi}
,\end{equation*}
for any $W\in\sections{A\oplus A^*}$.
Hence we get
\begin{equation}\label{immersed}
\pairing{X}{\LieDer_u\LieDer_{\theta}\xi}
=2\ppairing{X,u\Dorfman(\theta\Dorfman \xi)}
\end{equation}
and
\begin{equation}\label{immersed2}
\begin{aligned}
\pairing{X}{\LieDer_{\theta}\LieDer_u\xi}
=& 2\ppairing{X,\theta\Dorfman(\LieDer_u\xi)} \\
=& 2(\LieDer_{\theta}\ppairing{X,\LieDer_u\xi}
-\ppairing{\theta\Dorfman X,\LieDer_u\xi}) \\
=& 2(\LieDer_{\theta}\ppairing{X,u\Dorfman\xi}
-\ppairing{\theta\Dorfman X,u\Dorfman\xi}
-\ppairing{\theta\Dorfman X,\inserts_{\xi}\ds u}) \\
=& 2\ppairing{X,\theta\Dorfman(u\Dorfman\xi)}+
\pairing{\inserts_{\xi}\ds u}{\inserts_X\dA\theta}.
\end{aligned}
\end{equation}
Subtracting \eqref{immersed} from \eqref{immersed2} yields the result.
\end{proof}

\begin{proof}[Proof of Theorem~\ref{Thm:C}]
\textsl{\underline{Step 1}: We start by proving that \ref{a} implies \ref{c}.}

Because of \eqref{nomination},
for all $f\in\cinf{M}$ and $\alpha\in\sections{A^*}$, one has
\[ \big(\ld{\db{u}{\theta}}-\lb{\ld{u}}{\ld{\theta}}\big)(f\alpha)
= \pairing{\dees\ba{f}{u}-\ba{\dees f}{u}+\ba{f}{\dees u}}{\theta}\alpha + f \big(\ld{\db{u}{\theta}}
-\lb{\ld{u}}{\ld{\theta}}\big)(\alpha) .\]
Therefore, \ref{a} implies that $\ld{\db{u}{\theta}}
-\lb{\ld{u}}{\ld{\theta}}$ is a $\cinf{M}$-linear endomorphism of $\sections{\wedge A^*}$.

Let $\set{X_1,\cdots, X_n}$ be a local basis of $\Gamma(A)$ and let $\set{\xi^1,\cdots,\xi^n}$ be the dual basis of $\Gamma(\As)$. Then it follows from Lemma~\ref{Lem:prejudice} that
\begin{equation*}
\trace(\LieDer_{u\Dorfman\theta}-[\LieDer_u,\LieDer_{\theta}])
= \sum_{i=1}^n \pairing{X_i}
{(\LieDer_{u\Dorfman\theta}-[\LieDer_u,\LieDer_{\theta}])\xi^i}
=\pairing{\inserts_{\xi^i}\ds u}
{\inserts_{X_i}\dA\theta}
= 2\pairing{\ds u}{\dA\theta}
.\end{equation*}

\textsl{\underline{Step 2}: We prove that all assertions of the second group are equivalent.}
\begin{itemize}
\item \fbox{\ref{g} $\Leftrightarrow$ \ref{h}}
First note that the equivalence of \ref{g} and \ref{h} was already
shown to be a consequence of Proposition~\ref{Pro:steward}.
\item \fbox{\ref{g} $\Rightarrow$ \ref{c}}
Assuming \ref{g} holds, the  Propositions~\ref{Pro:steward}
and~\ref{Pro:supplemented} imply that
\begin{equation*} (\LieDer_{u\Dorfman
\theta}-[\LieDer_u,\LieDer_{\theta}] )f=0,\quad\forall f \in \CIM.
\end{equation*}
Thus the map $\LieDer_{u\Dorfman\theta}
-[\LieDer_u,\LieDer_{\theta}] :\sections{A^*}\to\sections{A^*}$ is
$\cinf{M}$-linear and it makes sense to speak of its trace, which is
equal to $2\pairing{\ds u}{\dA\theta}$ by
Proposition~\ref{Pro:gubernatorial}. This proves \ref{c}.
\item \fbox{\ref{c} $\Rightarrow$ \ref{e}}
First, the assumption that
$\LieDer_{u\Dorfman\theta}-[\LieDer_u,\LieDer_{\theta}]$
is $\CIM$-linear implies
that
\begin{equation}\label{passion} (\LieDer_{u\Dorfman
\theta}-[\LieDer_u,\LieDer_{\theta}] )f=0,\quad\forall f \in \CIM.
\end{equation}
Then, by \eqref{nomination}, we have
\begin{equation}\label{cry} \LieDer_{\ds f}+\LieDer_{\dA f}=0,
\quad\text{as a map }
\Gamma(\wedge A)\to\Gamma(\wedge A)
\end{equation}
and the first term of the r.h.s. of
\eqref{Relaxed} vanishes.
Moreover, the second term of the r.h.s. of \eqref{Relaxed} is also zero. Indeed, the expression $s=g\,dx_1\wedge\cdots\wedge dx_n$ of $s$ in local coordinates leads to
\begin{multline*} (\ld{\dee f}+\ld{\dees f})s
= (\ld{\anchors(\dee f)+\anchor(\dees f)} g)
\,dx_1\wedge\cdots\wedge dx_n \\
+g\sum_{i=1}^n dx_1\wedge\cdots\wedge
d(\ld{\anchors(\dee f)+\anchor(\dees f)} x_i)
\wedge\cdots\wedge dx_n ,\end{multline*}
whose r.h.s. must vanish
since \eqref{cry} implies that
\[ (\ld{\dee f}+\ld{\dees f}) g =
\ld{\anchors(\dee f)+\anchor(\dees f)} g =0, \qquad \forall
g\in\cinf{M} .\] Therefore, the l.h.s. of \eqref{Relaxed} is zero,
i.e. \[ \Delta f= \thalf (\LieDer_{X_0} + \LieDer_{\xi_0})f, \qquad
\forall f\in\cinf{M} .\] On the other hand, it follows from
Proposition~\ref{Pro:prelude} that
\[ \Delta u= \thalf (\LieDer_{X_0} + \LieDer_{\xi_0})u,\quad \forall u\in \Gamma(A) .\]
Indeed, the first two terms of the r.h.s. of \eqref{postface} cancel out since
\begin{equation*}
\trace(\LieDer_{u\Dorfman\theta}-[\LieDer_u,\LieDer_{\theta}])
= 2\pairing{\ds u}{\dA\theta}
,\end{equation*}
and the last term of the r.h.s. of \eqref{postface} is zero as a consequence of \eqref{passion}.
This proves \ref{e}.
The same argument shows that \ref{d} implies \ref{f}.
\item \fbox{\ref{e} $\Rightarrow$ \ref{g}}
Finally, it is clear that
\ref{e} (resp. \ref{f}) implies
\ref{g} (resp. \ref{h}).
\end{itemize}

\textsl{\underline{Step 3}: We prove that all assertions of the first group are equivalent.}

The equivalence of \ref{a} and \ref{b} is a well known fact \cites{MR1262213,MR1472888}.
Hence, it follows directly from Proposition~\ref{Pro:plagiarism} that \ref{a}, \ref{b}, \ref{i} and \ref{j} are all equivalent.
Finally, since $\text{\ref{i}}\Leftrightarrow
\text{\ref{a}} \Rightarrow \text{\ref{c}} \Rightarrow \text{\ref{e}}$,
we get that \ref{i} implies \ref{k}.
The converse implication is trivial.
A similar argument shows that \ref{j} is equivalent to \ref{l}.
\end{proof}

\begin{proof}[Proof of Corollary~\ref{Cor:blistering}]
Taking $\alpha=1\in\cinf{M}$ in \eqref{13}, we clearly see that
$\del V=0$. Moreover, $\dees V=0$ since $V$ is of top degree. Thus
\[ \lap V=\del\dees V+\dees\del V=0 ,\] and by \ref{k} of
Theorem~\ref{Thm:C}, we get $(\ld{X_0}+\ld{\xi_0})V=0$. This proves
\ref{m}. A similar argument yields \ref{n}. Now \ref{o} follows from
\ref{m} and the equalities of Lemma~\ref{Lem: chores}:
\[ (\del X_0)V=\ld{X_0}V=-\ld{\xi_0}V=(\dels\xi_0)V. \]
To prove \ref{p}, we observe that
\begin{eqnarray*}
\duality{X_0}{\xi_0}\Omega\otimes s\otimes V
&=& \Omega\otimes \ld{X_0}(s\otimes V) \\
&=& \Omega\otimes \ld{X_0}s\otimes V + \Omega\otimes s\otimes
\ld{X_0} V \\
&=& \Omega\otimes \ld{X_0}s\otimes V -\ld{X_0}
\Omega\otimes s\otimes V \\
&=& \Omega\otimes\ld{X_0}s\otimes V+\ld{\xi_0}\Omega\otimes V \\
&=& \Omega\otimes(\ld{X_0}-\ld{\xi_0})s\otimes V+\ld{\xi_0}(\Omega\otimes s)\otimes V \\
&=& \Omega\otimes (\ld{X_0}-\ld{\xi_0})s\otimes
V+\duality{X_0}{\xi_0}\Omega\otimes s\otimes V
\end{eqnarray*}
forces $(\ld{X_0}-\ld{\xi_0})s$ to be zero.
\end{proof}

\begin{proof}[Proof of Corollary~\ref{Cor:brood}]
Equation~\eqref{g14} is obvious since \ref{a} is equivalent to
\ref{i} in Theorem~\ref{Thm:C}. It is not hard to establish the
following identities:
\begin{gather*}
\LieDer_{\xi}\Abracket{u,v}-\Abracket{\LieDer_{\xi}u,v}
-\Abracket{u,\LieDer_{\xi}v} = \inserts_{(\dA \xi)^\sharp v} \ds u
-\LieDer_{(\dA \xi)^\sharp u}v , \\
\LieDer_{\xi}\Abracket{u,f}-\Abracket{\LieDer_{\xi}u,f}
-\Abracket{u,\LieDer_{\xi}f}= -\duality{u}{\LieDer_{\ds f+\dA
f}\xi}-\LieDer_{(\dA \xi)^\sharp u}f ,
\end{gather*}
for all $u,v\in\sections{A}$, $\xi\in\sections{A^*}$ and
$f\in\cinf{M}$. Since $\dA\xi_0=0$, \eqref{g15} follows immediately
from the implication \ref{a} $\Rightarrow$ \ref{k} in
Theorem~\ref{Thm:C}.
\end{proof}

\begin{proof}[Proof of Theorem~\ref{Thm:A}]
Let $\Omega$, $s$ and
$V$ be top forms as earlier. One can assume they exist since they
always exist locally and the problem is indeed local.
A direct calculation shows that,
for any $u\in \sections{\wedge A}$,
\begin{equation}\label{Eqn:bdiracsquare}
\bdirac^2 u=\big(\thalf(\ld{X_0}+\ld{\xi_0})-\Delta\big)u+
\thalf\big(\thalf\duality{\xi_0}{X_0}-\del X_0\big)u.
\end{equation}

If $(A,\As)$
is a Lie bialgebroid, \ref{k} of Theorem~\ref{Thm:C} together with
\eqref{Eqn:bdiracsquare} above implies that $\bdirac^2$ is the
multiplication by the function
$\fsmile=\thalf(\thalf\duality{\xi_0}{X_0}-\del X_0)$.

Conversely, if $\bdirac^2$ is the multiplication by some function $\fsmile$, \eqref{Eqn:bdiracsquare} implies that the operator $\Delta-\thalf(\ld{X_0}+\ld{\xi_0})$ is the multiplication
by the function \[ g:=\thalf\big(\thalf\duality{\xi_0}{X_0}-\del X_0\big)-\fsmile .\]
From \eqref{enrollment}, it follows that
\begin{multline*} g =\big(\Delta-\thalf(\ld{X_0}+\ld{\xi_0})\big)(1)
=\big(\Delta-\thalf(\ld{X_0}+\ld{\xi_0})\big)(1\cdot 1) \\
=1\cdot \big(\Delta-\thalf(\ld{X_0}+\ld{\xi_0})\big)(1) +
\big(\Delta-\thalf(\ld{X_0}+\ld{\xi_0})\big)(1) \cdot 1
+\ba{1}{\dees 1}-\ba{\dees 1}{1} \\
=2 \big(\Delta-\thalf(\ld{X_0}+\ld{\xi_0})\big)(1) =2 g
.\end{multline*} Hence $g=0$, $\Delta=\thalf(\ld{X_0}+\ld{\xi_0})$
and, by Theorem~\ref{Thm:C}, the pair $(A,\As)$ is a Lie
bialgebroid.

Finally, note that the function $\fsmile=\thalf(\thalf\duality{\xi_0}{X_0}-\del X_0)$
is independent of the order of the pair $(A,\As)$, i.e.
\[ \thalf(\thalf\duality{\xi_0}{X_0}-\del X_0)
=\thalf(\thalf\duality{\xi_0}{X_0}-\dels\xi_0), \]
where
\[ \bdirac^2=\thalf(\thalf\duality{\xi_0}{X_0}-\del X_0)
\qquad \text{and} \qquad
\bdiracs^2=\thalf(\thalf\duality{\xi_0}{X_0}-\dels\xi_0) .\]
Indeed, Corollary~\ref{Cor:blistering} asserts that
$\dels\xi_0=\del X_0$.
\end{proof}

In \cite{MR2103012}, Kosmann-Schwarzbach proved that $\bdirac$ is a deriving operator of the Courant algebroid $A\oplus A^*$. As a consequence, $\bdirac$ is indeed a Dirac generating operator. This is Corollary~\ref{Cor:B}, which is proved below using an argument similar to that in \cite{MR2103012}. 

\begin{proof}[Proof of Corollary~\ref{Cor:B}]
By Theorem~\ref{Thm:A}, we know that $\bdirac^2\in\cinf{M}$.
Thus, to prove that 
\[ \bdirac=\bdees+\bdel=\dees-\del+\thalf(X_0+ \xi_0) \] 
is a Dirac generating operator, we only need to check that it satisfies conditions \ref{conda} and \ref{condb} of Definition~\ref{Def:DiracGTR}.

For all $u\in\sections{A}$, $\theta\in\sections{\As}$
and $v\in\sections{\wedge A}$, one has:
\begin{align*}
\lb{\ds}{f}(v)&=\ds f\wedge v, &
\lb{\del}{f}(v)&=-\inserts_{\dA f} v, \\
\lb{\ds}{u}(v)&=\ds u\wedge v, &
\lb{\del}{u}(v)&=-\LieDer_{u}v+\del u \wedge v, \\
\lb{\ds}{\theta}(v)&=\LieDer_{\theta}v, &
\lb{\del}{\theta}(v)&=\inserts_{\dA \theta}v.
\end{align*}
On the left hand side, the three equalities are trivialities while, on the right hand side, the first two equalities are immediate consequences of \eqref{16} and the third is exactly
\eqref{Eqt:partialinserts}.

A straightforward computation based on the six relations above leads to 
\[ \lb{\dees-\del}{f}(v)=(\ds f+\dA f)\cdot v \] 
and
\[ \lb{\lb{\dees-\del}{u_1+\theta_1}}{u_2+\theta_2}(v)
= \big(\db{(u_1+\theta_1)}{(u_2+\theta_2)}\big)\cdot v ,\]
where $\cdot$ denotes the Clifford action of $\sections{A\oplus A^*}$ on $\sections{\wedge A}$ and $\db{}{}$ is the bracket on $\sections{A\oplus A^*}$ defined by \eqref{4}.

On the other hand, it is obvious that
\[ \lb{e}{f}=0 \qquad \text{and} \qquad \lb{\lb{e}{e_1}}{e_2}=0 ,\]
for all $e,e_1,e_2\in\sections{A\oplus A^*}$ and thus, in particular, for $e=\thalf(X_0+\xi_0)$. 
This completes the proof.
\end{proof}

%
%
%
%

\begin{proof}[Proof of Proposition~\ref{Pro:commissoner}]
While proving Theorem~\ref{Thm:A}, we obtained
\[ \fsmile=\thalf(\thalf\duality{\xi_0}{X_0}-\del X_0) .\]
Therefore, \ref{q} follows from
\begin{align*}
\LieDer_{X_0}(\Omega\otimes s)\otimes V
=& \Omega\otimes \LieDer_{X_0}s\otimes V+\LieDer_{X_0} \Omega\otimes
s\otimes V \\
=& \Omega\otimes \LieDer_{X_0}s\otimes V- \Omega\otimes s\otimes
\LieDer_{X_0} V \\
=& \Omega\otimes\LieDer_{X_0}(s\otimes V)-2\Omega\otimes s\otimes
\LieDer_{X_0}V \\
=& (\duality{\xi_0}{X_0}-2\del X_0)\Omega\otimes s\otimes V
.\end{align*}
The argument for \ref{r} is similar.
\end{proof}

\section{Examples}
\label{last_section}

\subsection{Exact Lie bialgebroids}

Let us briefly recall the notion of an exact Lie bialgebroid
\cite{MR1371234} (see also \cite{0710.3098}).
Let $A$ be a Lie algebroid with bracket $\ba{}{}$ on $\sections{A}$ and anchor map $a:A\to TM$.
Given $\Lambda\in\sections{\wedge^2 A}$ satisfying
$\ba{\ba{\Lambda}{\Lambda}}{X}=0$ for all $X\in\sections{A}$, the bracket
\begin{equation*}\label{Eqt:migrant}
\Lambdabracket{\xi,\theta}=\LieDer_{\Lambdasharp(\xi)}\theta-\LieDer_{\Lambdasharp(\theta)}\xi
-\dA (\Lambda(\xi,\theta))=\LieDer_{\Lambdasharp(\xi)}\theta
-\inserts_{\Lambdasharp(\theta)}\dA \xi
\end{equation*}
on $\sections{\As}$ and the anchor map
$\anchors=\anchor\rond\Lambda\diese$,
make $A^*$ a Lie algebroid.
The pair of Lie algebroid structures on $A$ and $A^*$ fits into a Lie bialgebroid $(A,\As)$, which is known as an exact Lie bialgebroid. If $\ba{\Lambda}{\Lambda}=0$, then $(A,\Lambda)$ is called a Lie algebroid with a Poisson structure and $(A,\As)$ is called a triangular Lie bialgebroid.

Let $\Omega$ be a nowhere zero section of $\sections{\wedge^{\TOP}A^*}$.
We will need the following formula:
\begin{equation}\label{descend}
\dels \theta=-\del \Lambdasharp(\theta)+ 2
\duality{\Lambda}{\dA\theta}, \quad\forall \theta\in \sections{\As}.
\end{equation}
In fact, a simple computation yields that
\[
\LieDer_{\theta}\Omega=\Lambdabracket{\theta,\Omega}=
\LieDer_{\Lambdasharp(\theta)}\Omega+2
\duality{\Lambda}{\dA\theta}\Omega.
\]
Then \eqref{descend} follows from Lemma~\ref{Lem: chores}.

An explicit expression of the modular cocycle $X_0$ of $\As$ was first obtained in \cite{MR2180819} (see also \cites{0710.3098,MR2228680}).
\begin{lem}[\cite{MR2180819}] \label{Lem:combat}
Assume that $(A,\As)$
is an exact Lie bialgebroid as above and $\xi_0$ is the modular
cocycle of $A$, then the modular cocycle of $\As$ is
\[ X_0 = 2 \del \Lambda - \Lambdasharp(\xi_0) .\]
\end{lem}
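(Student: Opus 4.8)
The plan is to work directly from the defining equation \eqref{17} of the modular cocycle $X_0$ of $\As$, namely $\duality{X_0}{\theta}\,\Omega\otimes s=(\LieDer_\theta\Omega)\otimes s+\Omega\otimes\LieDer_{\anchors(\theta)}s$ for all $\theta\in\sections{\As}$, and to evaluate each of the two terms on the right in terms of $\Lambda$, $\xi_0$ and the BV operator $\del$ of $A$. The first term is essentially already computed: by \eqref{exposure} one has $\LieDer_\theta\Omega=(\dels\theta)\Omega$, and the formula \eqref{descend} proved just above rewrites this as $\LieDer_\theta\Omega=\bigl(-\del\Lambdasharp(\theta)+2\duality{\Lambda}{\dA\theta}\bigr)\Omega$. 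For the second term I would first record that, by \eqref{18} together with $\LieDer_uV=(\del u)V$ from \eqref{ethnic}, one has $\LieDer_{\anchor(u)}s=\bigl(\duality{\xi_0}{u}-\del u\bigr)s$ for every $u\in\sections{A}$; applying this with $u=\Lambdasharp(\theta)$ and using $\anchors=\anchor\rond\Lambdasharp$ gives $\LieDer_{\anchors(\theta)}s=\bigl(\duality{\xi_0}{\Lambdasharp(\theta)}-\del\Lambdasharp(\theta)\bigr)s$.

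Substituting both expressions into \eqref{17} and comparing coefficients of the nowhere-zero section $\Omega\otimes s$ yields
\[ \duality{X_0}{\theta}=-2\del\Lambdasharp(\theta)+2\duality{\Lambda}{\dA\theta}+\duality{\xi_0}{\Lambdasharp(\theta)},\qquad\forall\theta\in\sections{\As}. \]
It then remains to identify the right-hand side with $\duality{2\del\Lambda-\Lambdasharp(\xi_0)}{\theta}$. For the first two terms I would apply the identity $\del\inserts_\theta+\inserts_\theta\del=\inserts_{\dA\theta}$ of Lemma~\ref{Lem:treatises} to $\Lambda\in\sections{\wedge^2A}$: since $\inserts_\theta\Lambda=\Lambdasharp(\theta)$, $\inserts_\theta(\del\Lambda)=\duality{\del\Lambda}{\theta}$ and $\inserts_{\dA\theta}\Lambda=\duality{\Lambda}{\dA\theta}$, this reads $\del\Lambdasharp(\theta)+\duality{\del\Lambda}{\theta}=\duality{\Lambda}{\dA\theta}$, whence $-2\del\Lambdasharp(\theta)+2\duality{\Lambda}{\dA\theta}=2\duality{\del\Lambda}{\theta}$. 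For the last term, the skew-symmetry of $\Lambda$ makes $\Lambdasharp$ skew-adjoint, so $\duality{\xi_0}{\Lambdasharp(\theta)}=-\duality{\Lambdasharp(\xi_0)}{\theta}$. Combining these, $\duality{X_0}{\theta}=\duality{2\del\Lambda-\Lambdasharp(\xi_0)}{\theta}$ for all $\theta\in\sections{\As}$, which proves the lemma.

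The computation is short and each step invokes a tool already available (\eqref{17}, \eqref{18}, Lemma~\ref{Lem: chores}, Lemma~\ref{Lem:treatises} and \eqref{descend}), so there is no conceptual obstacle; the one thing to watch is the normalization and sign bookkeeping, in particular making sure that the double contraction $\inserts_{\dA\theta}\Lambda$ is normalized so as to equal exactly the quantity $\duality{\Lambda}{\dA\theta}$ appearing in \eqref{descend}, and that the convention for $\Lambdasharp$ used throughout (so that $\inserts_\theta\Lambda=\Lambdasharp(\theta)$) is the one fixed in the definition of the exact bracket $\Lambdabracket{\cdot,\cdot}$. Once these conventions are pinned down, the factors of $2$ and all signs are forced.
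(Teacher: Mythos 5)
Your argument is correct, and it is worth noting that the paper itself offers no proof to compare against: Lemma~\ref{Lem:combat} is simply quoted from \cite{MR2180819}, with only the auxiliary identity \eqref{descend} derived in the text. What you supply is therefore a genuine, self-contained derivation from the paper's own toolkit, and it is the natural one: evaluate the defining relation \eqref{17} for $X_0$, use \eqref{exposure} together with \eqref{descend} for the $\LieDer_{\theta}\Omega$ term, use \eqref{18} and \eqref{ethnic} (with $u=\Lambdasharp(\theta)$ and $\anchors=\anchor\rond\Lambdasharp$) for the $\LieDer_{\anchors(\theta)}s$ term, and then convert $-2\del\Lambdasharp(\theta)+2\duality{\Lambda}{\dA\theta}$ into $2\duality{\del\Lambda}{\theta}$ via Lemma~\ref{Lem:treatises} applied to $\Lambda$, finishing with skew-symmetry of $\Lambdasharp$. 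Each step checks out. Your cautionary remark about normalization also resolves favorably: the paper's own conventions do give $\inserts_{\theta}\Lambda=\Lambdasharp(\theta)$ (this is forced by the equality of the two expressions for $\Lambdabracket{\xi,\theta}$) and $\inserts_{\dA\theta}\Lambda=\duality{\Lambda}{\dA\theta}$ with the determinant pairing, which is exactly the identification the authors themselves make in the subsequent proposition when they rewrite $\duality{\xi_0}{\del\Lambda}+\del\Lambdasharp(\xi_0)$ as $\inserts_{\xi_0}\del\Lambda+\del\inserts_{\xi_0}\Lambda=\inserts_{\dA\xi_0}\Lambda$; so your proof is in fact the exact analogue, for general $\theta$, of the manipulation the paper performs only for the cocycle $\xi_0$.
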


\begin{prop}
If $(A,A^*)$ is an exact Lie bialgebroid, then $\bdirac^2=0$.
\end{prop}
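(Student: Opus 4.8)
The plan is to compute the function $\fsmile$ of Theorem~\ref{Thm:A} explicitly and check that it vanishes. An exact Lie bialgebroid is in particular a Lie bialgebroid, so Theorem~\ref{Thm:A} (more precisely equation~\eqref{Eqn:bdiracsquare} together with \ref{k} of Theorem~\ref{Thm:C}) applies and gives $\bdirac^2=\fsmile=\thalf\big(\thalf\duality{\xi_0}{X_0}-\del X_0\big)$, where $\xi_0$ is the modular cocycle of $A$ and $X_0$ that of $\As$. Thus it suffices to prove the single identity $\duality{\xi_0}{X_0}=2\,\del X_0$.

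First I would substitute the explicit form of $X_0$ supplied by Lemma~\ref{Lem:combat}, namely $X_0=2\del\Lambda-\Lambdasharp(\xi_0)$. Since $\del^2=0$ and $\del\Lambda\in\sections{A}$, applying $\del$ gives at once $\del X_0=-\del\Lambdasharp(\xi_0)$. On the other hand, pairing $X_0$ with $\xi_0$ and using that $\Lambda$ is skew-symmetric, so that $\duality{\xi_0}{\Lambdasharp(\xi_0)}=\Lambda(\xi_0,\xi_0)=0$, one obtains $\duality{\xi_0}{X_0}=2\duality{\xi_0}{\del\Lambda}=2\inserts_{\xi_0}(\del\Lambda)$.

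The key step is then to identify $\inserts_{\xi_0}(\del\Lambda)$ with $\del X_0$. For this I would invoke the commutation relation of Lemma~\ref{Lem:treatises}, $\del\inserts_{\xi_0}+\inserts_{\xi_0}\del=\inserts_{\dA\xi_0}$, combined with the fact that the modular cocycle $\xi_0$ of $A$ is $\dA$-closed, $\dA\xi_0=0$ (this was already used in the proof of Corollary~\ref{Cor:brood}). Applying the resulting identity $\inserts_{\xi_0}\del=-\del\inserts_{\xi_0}$ to $\Lambda\in\sections{\wedge^2 A}$ yields $\inserts_{\xi_0}(\del\Lambda)=-\del(\inserts_{\xi_0}\Lambda)=-\del\Lambdasharp(\xi_0)=\del X_0$. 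Hence $\duality{\xi_0}{X_0}=2\,\del X_0$, and therefore $\fsmile=\thalf(\del X_0-\del X_0)=0$, i.e.\ $\bdirac^2=0$.

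I do not expect a genuine obstacle here: the only care needed is the bookkeeping of signs and degrees when specializing Lemma~\ref{Lem:treatises} to the bivector $\Lambda$, and recording that $\dA\xi_0=0$ is available. As an alternative one could instead verify $\fsmile=0$ through Proposition~\ref{Pro:commissoner}, by showing that $\ld{X_0}(\Omega\otimes s)=0$ using Lemma~\ref{Lem:combat} and \eqref{descend}; but the direct computation above is the shortest route.
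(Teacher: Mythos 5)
Your argument is correct and is essentially the paper's own proof: both start from $\fsmile=\thalf(\thalf\duality{\xi_0}{X_0}-\del X_0)$, substitute $X_0=2\del\Lambda-\Lambdasharp(\xi_0)$ from Lemma~\ref{Lem:combat}, use $\del^2=0$ and the skew-symmetry of $\Lambda$, and conclude via Lemma~\ref{Lem:treatises} together with $\dA\xi_0=0$. The only difference is cosmetic bookkeeping (you prove $\duality{\xi_0}{X_0}=2\del X_0$, while the paper shows directly that $\thalf\duality{\xi_0}{X_0}-\del X_0=\inserts_{\dA\xi_0}\Lambda=0$).
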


\begin{proof}
From the proof of Theorem~\ref{Thm:A}, we know that
$\bdirac^2=\fsmile=\thalf(\thalf\duality{\xi_0}{X_0}-\del X_0)$. Then
by Lemma \ref{Lem:combat},
\begin{align*}
\thalf\duality{\xi_0}{X_0}-\del X_0
&=\duality{\xi_0}{\del
\Lambda}-\thalf{\xi_0}{\Lambdasharp(\xi_0)}+\del \Lambdasharp(\xi_0)
-2\del^2\Lambda\\
&=\inserts_{\xi_0}\del\Lambda+\del\inserts_{\xi_0}\Lambda   =
\inserts_{\dA\xi_0}\Lambda, \quad\mbox{( by Lemma~
\ref{Lem:treatises}).}
\end{align*}
Since $\xi_0$ is a cocycle, the result is zero.
\end{proof}

\begin{rmk}
In the case of triangular Lie bialgebroids, the above
result was due to Kosmann-Schwarzbach 
\cite[Theorem 3.3]{MR2103012}. For the Lie bialgebroid $(TM,T^*M)$ associated to a Poisson manifold, this was due to Koszul \cite{MR837203}. In \cite{MR2103012}, Kosmann-Schwarzbach also explained the connection with
\cite{AlekseevXu}.
\end{rmk}

\subsection{Poisson Nijenhuis Lie algebroids}

We recall some basic facts about Poisson-Nijenhuis Lie algebroids (in short, PN-algebroids). Nijenhuis operators and PN-structures are discussed in detail in \cites{MR1077465,MR1421686,MR1614690}. The modular classes of PN-manifolds were studied in
\cites{math/0607784,0710.3098,math/0611202}. The
extension of the modular classes to PN-Lie algebroids was carried out by Caseiro \cite{MR2333510}.

In what follows, we consider   PN-Lie algebroid $(A,N,\Lambda)$,
where $(A,\Abracket{~,~},\rhoA)$ is a Lie algebroid over $M$, $N:A\lon A$ is a Nijenhuis operator and $\Lambda\in \sections{\wedge^2
A}$ is a Poisson structure on $A$.

The Nijenhuis operator $N$ and the Poisson bivector $\Lambda$ are compatible in the following sense:
\begin{equation*}\label{Eqn:waitress}
\Lambda\diese\Ns=N\Lambda\diese \qquad \text{and} \qquad {[\xi,\theta]}_{N\Lambda}=\big(\Lambdabracket{\xi,\theta}\big)_{\Ns},\quad \forall \xi,\theta\in \sections{\As}
.\end{equation*}
Here ${[~,~]}_{N\Lambda}$ is the Lie bracket defined by the bivector field $N\Lambda\in\sections{\wedge^2 A}$ associated to the bundle map $N\Lambda\diese$, and $(\Lambdabracket{~,~})_{\Ns}$ is the Lie bracket obtained from the Lie bracket $\Lambdabracket{~,~}$ by deformation along the Nijenhuis tensor $\Ns$.

The compatibility of $\Lambda$ and $N$ implies
that the bivectors $\Lambda_k$ associated to the bundle maps $\Lambda_k\diese=N^k\rond\Lambda\diese$ ($k\in\mathbb{N}$) are Poisson bivectors and the triples $(A,\Lambda_k,N^l)$ (with $k,l\in\mathbb{N}$) are PN-Lie algebroids. Let us denote the deformation of the Lie algebroid $A$ along $N^l$ by $A_l$,
and the Lie algebroid induced by the Poisson bivector
$\Lambda_k$ by $\As_k$. It was proved that all pairs $(A_l,\As_k)$ are Lie bialgebroids and we have the following fact.
\begin{lem}[\cites{math/0607784,0710.3098,MR2333510}]
Let $\xi_0$ be the modular cocycle of
$A$. Then the modular cocycle of $A_l$ is given by
\begin{equation}\label{Eqt:Scene}
\xi_l=\dA(\trace N^l)+(\Ns)^l \xi_0,
\end{equation}
and one has the identity:
\begin{equation*}\label{Eqt:primaries}
N\del \Lambda_{l-1}-\del \Lambda_l=\tfrac{1}{2l}\Lambdasharp(\dA
(\trace N^l))
.\end{equation*}
\end{lem}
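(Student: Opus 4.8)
The two assertions describe how the modular data of a Lie algebroid transform when it is deformed along a power of a Nijenhuis operator. The plan is to establish the first formula (the ``modular cocycle of a Nijenhuis deformation'') by a direct computation, and then to deduce the recursion for the modular vector fields $\del\Lambda_k$ by applying that same formula to $\As$, feeding the outcome into Lemma~\ref{Lem:combat}, and closing with the classical Nijenhuis trace identity.

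For the first formula I would start from the fact that, since $N$ is a Nijenhuis operator, so is $N^l$, and the differential of $A_l$ on $\sections{\wedge^{\bullet}A^*}$ is the deformed differential $d_{A_l}=\iota_{N^l}\rond\dA-\dA\rond\iota_{N^l}$, where $\iota_{N^l}$ is the degree-zero derivation of $(\sections{\wedge^{\bullet}A^*},\wedge)$ extending $(\Ns)^l\colon A^*\to A^*$. The cocycle $\xi_l$ is determined, with respect to the fixed $\Omega$ and $s$, by $\LieDer^{A_l}_u(s\otimes V)=\duality{\xi_l}{u}\,s\otimes V$; since the anchor of $A_l$ sends $u$ to $\anchor(N^l u)$, the $s$-factor contributes precisely the corresponding $A$-term evaluated at $N^l u$, and the remainder is controlled by the $A_l$-divergence of $u$, i.e.\ by $\LieDer^{A_l}_u\Omega=d_{A_l}(\iota_u\Omega)$ (using $d_{A_l}\Omega=0$ for dimension reasons). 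Feeding in $\iota_{N^l}\Omega=(\trace N^l)\Omega$, the commutation $\iota_{N^l}\rond\iota_u=\iota_u\rond\iota_{N^l}-\iota_{N^l u}$, and $\dA\iota_v\Omega=\LieDer^A_v\Omega=-(\del v)\Omega$ (Lemma~\ref{Lem: chores}), a two-line calculation yields $\LieDer^{A_l}_u\Omega=-\bigl(\del(N^l u)+\duality{\dA\trace N^l}{u}\bigr)\Omega$; reassembling, the $A$-contributions amount to $\duality{\xi_0}{N^l u}=\duality{(\Ns)^l\xi_0}{u}$, whence $\xi_l=(\Ns)^l\xi_0+\dA(\trace N^l)$. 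One could equally run this in a local frame of $A$, at the cost of index bookkeeping.

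For the second formula I would apply the first one not to $(A,N)$ but to the Lie algebroid $\As$ equipped with its Nijenhuis operator $\Ns$: iterating the compatibility ${[\xi,\theta]}_{N\Lambda}=(\Lambdabracket{\xi,\theta})_{\Ns}$ identifies $\As_l$ with the deformation of $\As$ along $(\Ns)^l$, so the first formula gives that the modular cocycle of $\As_l$, say $X_0^{(l)}$ (with $X_0^{(0)}=X_0$ the modular cocycle of $\As$), equals $N^l X_0+d_{\As}(\trace N^l)=N^l X_0-\Lambda\diese(\dA\trace N^l)$. On the other hand Lemma~\ref{Lem:combat}, applied to the exact Lie bialgebroids $(A,\As)$ and $(A,\As_l)$, yields $X_0=2\del\Lambda-\Lambda\diese(\xi_0)$ and $X_0^{(l)}=2\del\Lambda_l-\Lambda_l\diese(\xi_0)$; eliminating $X_0$ and $X_0^{(l)}$ between these relations and using $N^l\Lambda\diese=\Lambda_l\diese$ (a consequence of $\Lambda\diese\Ns=N\Lambda\diese$) produces the ``telescoped'' identity
\[ N^l\del\Lambda-\del\Lambda_l=\tfrac12\,\Lambda\diese(\dA\trace N^l).\]
Writing this for $l$ and for $l-1$, premultiplying the latter by $N$ and invoking $N\Lambda\diese=\Lambda\diese\Ns$, and subtracting, one finds $N\del\Lambda_{l-1}-\del\Lambda_l=\tfrac12\Lambda\diese(\dA\trace N^l)-\tfrac12\Lambda\diese\bigl(\Ns(\dA\trace N^{l-1})\bigr)$. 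The classical Nijenhuis trace identity $\Ns(\dA\trace N^{l-1})=\tfrac{l-1}{l}\,\dA(\trace N^l)$ then collapses the right-hand side to $\tfrac{1}{2l}\Lambda\diese(\dA\trace N^l)$, which is the claim.

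The conceptual content lies entirely in the deformed-differential identity $d_{A_l}=[\iota_{N^l},\dA]$ (and its analogue on $\As$); granted this, the first formula is essentially immediate and the second is an exercise in assembling known identities. The points that require care are: the sign conventions (the effective minus sign relating $\duality{\xi_0}{u}$ to the $s$-divergence of $\anchor(u)$ and to $\del u$, and the sign of $d_{\As}$ on functions); the standard but non-trivial PN facts that $\Ns$ is a Nijenhuis operator on $\As$ and that $\As_l=(\As)_{(\Ns)^l}$, both flowing from the compatibility conditions; and the importation of the Nijenhuis trace identity, whose own proof is classical, resting on the vanishing of the Nijenhuis torsion of $N$. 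As an alternative staying closer to the first formula's machinery, one may transport $d_{A_l}=[\iota_{N^l},\dA]$ through the Batalin--Vilkovisky operator to obtain $\del_{A_l}=\inserts_{\dA\trace N^l}-[\widehat{N^l},\del]$ on $\sections{\wedge A}$ and apply it to $\Lambda$, using $\widehat{N^l}\Lambda=2\Lambda_l$ (again from $\Lambda\diese\Ns=N\Lambda\diese$); this reproduces the same telescoped identity without going through $\As$.
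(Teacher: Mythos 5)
The paper does not prove this lemma: it is imported verbatim from \cites{math/0607784,0710.3098,MR2333510}, so there is no internal argument to measure yours against. Your proposal is a correct derivation in outline, and it is consistent with everything the paper actually uses. From $d_{A_l}=[\iota_{N^l},\dA]$, $\iota_{N^l}\Omega=(\trace N^l)\,\Omega$, the commutation $\iota_{N^l}\inserts_u=\inserts_u\iota_{N^l}-\inserts_{N^l u}$ and Lemma~\ref{Lem: chores}, one indeed gets $\LieDer^{A_l}_{u}\Omega=-\bigl(\del(N^l u)+\duality{\dA\trace N^l}{u}\bigr)\Omega$, and reassembling with the $s$-factor gives \eqref{Eqt:Scene}. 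Running the same formula on $\As$, identifying $\As_l$ with the deformation of $\As$ along $(\Ns)^l$, and eliminating the modular cocycles through Lemma~\ref{Lem:combat} (equivalently \eqref{Eqt:legality}) does yield $N^l\del\Lambda-\del\Lambda_l=\thalf\Lambdasharp(\dA\trace N^l)$ — note the $\Lambda_l^\sharp(\xi_0)$ terms cancel for any convention — and the classical trace identity $\Ns\dA(\trace N^{k})=\tfrac{k}{k+1}\dA(\trace N^{k+1})$ telescopes this into the stated recursion (for $l=1$ the telescoped identity is already the claim). Two remarks. First, your argument is only as self-contained as three imported PN facts: that $N^l$ is Nijenhuis with $d_{A_{N^l}}=[\iota_{N^l},\dA]$; that deforming twice by $\Ns$ equals deforming once by $(\Ns)^2$ (needed for $\As_l=(\As)_{(\Ns)^l}$, and resting on the vanishing of the torsion); and the trace identity itself. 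This is close in spirit to how the cited references proceed, so nothing is gained in elementarity, but the telescoping through Lemma~\ref{Lem:combat} is a clean way to pass from the first formula to the second. Second, the one genuinely sign-sensitive input is $d_{\As}f=-\Lambdasharp(\dA f)$: with the opposite convention for $\Lambdasharp$ the right-hand side of the recursion flips sign, so you should verify this equality explicitly in the same convention in which Lemma~\ref{Lem:combat} and \eqref{descend} are written, rather than leaving it as a parenthetical caution.
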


According to Lemma \ref{Lem:combat}, we also know that the modular cocycle of $\As_k$ is given by
\begin{equation}\label{Eqt:legality}
X_k=2\del \Lambda_k-\Lambda_k^\sharp(\xi_0).
\end{equation}

\begin{prop}
For the Lie bialgebroid $(A_l,A_k^*)$, $\bdirac^2=0$.
\end{prop}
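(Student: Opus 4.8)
The plan is to run the argument of the exact Lie bialgebroid case, adapted to the bi-Hamiltonian hierarchy. First, specialising the identity $\bdirac^2=\fsmile=\thalf(\thalf\duality{\xi}{X}-\del X)$ obtained in the proof of Theorem~\ref{Thm:A} to the Lie bialgebroid $(A_l,\As_k)$ yields
\[ \bdirac^2=\thalf\bigl(\thalf\duality{\xi_l}{X_k}-\del_{A_l}X_k\bigr), \]
where $\xi_l$ is the modular cocycle of $A_l$, $X_k$ that of $\As_k$, and $\del_{A_l}$ the Batalin--Vilkovisky operator of $A_l$ (with respect to the chosen volume data). The whole point is to show this function is $0$.

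The key move is to get rid of the deformed operator $\del_{A_l}$. By \ref{o} of Corollary~\ref{Cor:blistering} applied to $(A_l,\As_k)$ one has $\del_{A_l}X_k=\del_{\As_k}\xi_l$, and $\del_{\As_k}$ depends only on the Lie algebroid $\As_k$, which is exact with bivector $\Lambda_k$. Hence \eqref{descend}, with $\Lambda$ replaced by $\Lambda_k$, applies and gives $\del_{\As_k}\xi_l=-\del(\Lambda_k^\sharp\xi_l)+2\duality{\Lambda_k}{\dA\xi_l}$, where now $\del$ and $\dA$ are the operators of the \emph{undeformed} $A$. Thus
\[ \bdirac^2=\thalf\bigl(\thalf\duality{\xi_l}{X_k}+\del(\Lambda_k^\sharp\xi_l)-2\duality{\Lambda_k}{\dA\xi_l}\bigr), \]
an expression written entirely in terms of $\del$, $\dA$, $N$ and $\Lambda$.

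Next I would substitute the explicit formulas \eqref{Eqt:Scene}, $\xi_l=\dA(\trace N^l)+(\Ns)^l\xi_0$, and \eqref{Eqt:legality}, $X_k=2\del\Lambda_k-\Lambda_k^\sharp(\xi_0)$, and expand, using: $\dA^2=0$ (so that $\dA\xi_l=\dA((\Ns)^l\xi_0)$); $\del^2=0$; the compatibility $\Lambdasharp\Ns=N\Lambdasharp$, which iterates to $\Lambda_k^\sharp(\Ns)^l=\Lambda_{k+l}^\sharp$; Lemma~\ref{Lem:treatises} to commute $\del$ with contractions; and the hierarchy identity $N\del\Lambda_{l-1}-\del\Lambda_l=\tfrac1{2l}\Lambdasharp(\dA(\trace N^l))$ to absorb all $\trace N^l$-contributions. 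After these cancellations the expression should collapse --- just as, in the exact case, it reduced to $\inserts_{\dA\xi_0}\Lambda$ --- to a term of the shape $\inserts_{\dA\xi_0}(\text{bivector})$, which vanishes since $\xi_0$ is a $\dA$-cocycle. This gives $\bdirac^2=0$.

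The main obstacle is bookkeeping rather than any new idea: several differentials and divergence operators attached to different members of the hierarchy are simultaneously in play (the differential $\dA$ and Batalin--Vilkovisky operator $\del$ of the fixed algebroid $A$, their analogues for the $N^l$-deformed algebroid $A_l$, and $\del_{\As_k}$), and the master formula, the formulas \eqref{Eqt:Scene}--\eqref{Eqt:legality}, and the hierarchy identity must all be re-expressed relative to the fixed algebroid $A$ before being combined. The step of routing $\del_{A_l}X_k$ through $\del_{\As_k}\xi_l$ and \eqref{descend} is precisely what makes this feasible without developing a separate calculus for the deformed Batalin--Vilkovisky operators; everything after that is routine manipulation of contractions and the Schouten bracket, modelled on the proof for exact Lie bialgebroids.
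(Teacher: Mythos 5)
Your proposal is correct and essentially reproduces the paper's own argument: the paper likewise evaluates $\fsmile$ in the dual form $\thalf\big(\thalf\duality{\xi_l}{X_k}-\dels\xi_l\big)$ (which is exactly your step of trading $\del_{A_l}X_k$ for $\dels\xi_l$ via \ref{o} of Corollary~\ref{Cor:blistering}), expresses $\dels$ on $\sections{\As_k}$ through \eqref{descend} with $\Lambda_k$ so that everything is written with the undeformed $\del$ and $\dA$, and then substitutes \eqref{Eqt:Scene} and \eqref{Eqt:legality} and cancels using Lemma~\ref{Lem:treatises}, the compatibility \eqref{Eqn:waitress} and the hierarchy identity \eqref{Eqt:primaries}. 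The only cosmetic difference is the endgame: in the paper the cocycle property of $\xi_0$ enters through the anticommutation relations $\inserts_{\theta}\del=-\del\inserts_{\theta}$ for $\dA$-closed $\theta$ (applied to $(\Ns)^l\xi_0$ and $\dA\trace N^l$), and the last surviving terms cancel pairwise after applying \eqref{Eqn:waitress}, together with the skewsymmetry of $\Lambda_{k+l}^\sharp$ killing the $\duality{\xi_0}{\Lambda_{k+l}^\sharp\xi_0}$ term, rather than collapsing to a single $\inserts_{\dA\xi_0}$-type term --- but all the identities you list are precisely the ones needed.
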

\begin{proof}
Note that $N^l:A_l\to A_0$ is a morphism of Lie algebroids and
hence $(\Ns)^l(\xi_0)$ is a 1-cocycle in $A_l^*$:
\[ \dA(\Ns)^l(\xi_0)=(\Ns)^l(\dA\xi_0)=0 .\]
Here $A_0$ is $A$. Thus by Lemma~\ref{Lem:treatises}, one has
\begin{equation}\label{slackers}
\inserts_{(\Ns)^l(\xi_0)}\del=-\del\inserts_{(\Ns)^l(\xi_0)}.
\end{equation}
For the same reason,
\begin{equation}\label{devotion}
\inserts_{\dA \trace N^l}\del=-\del\inserts_{\dA \trace N^l}.
\end{equation}
From (\ref{descend}), we know that the boundary
operator $\dels:\sections{\As_k}\to\cinf{M}$ is given by:
\[ \dels \theta=-\del \Lambda_k^\sharp(\theta)+ 2 \duality{\Lambda_k}{\dA\theta},
\quad\forall \theta\in \sections{\As_k} .\]
Hence by \eqref{Eqt:legality} and \eqref{Eqt:Scene}, one has
\begin{align*}
\thalf\duality{\xi_l}{X_k}-\dels \xi_l
=& \thalf\duality{\dA(\trace N^l)+(\Ns)^l\xi_0}{2\del
\Lambda_k-\Lambda_k^\sharp(\xi_0)} \\
& +\del\Lambda_k^\sharp\big(\dA(\trace
N^l) + (\Ns)^l\xi_0 \big) + 2\duality{\Lambda_k}{\dA (\Ns)^l\xi_0} \\
=& \big(\inserts_{\dA \trace N^l}\del+\del\inserts_{\dA \trace
N^l}+\inserts_{(\Ns)^l\xi_0}\del+\del\inserts_{(\Ns)^l\xi_0}\big) \Lambda_k \\
&+ \thalf\duality{\Lambda_k^\sharp(\dA \trace N^l)}{\xi_0}
- \thalf\duality{\xi_0}{\Lambda_{k+l}^\sharp\xi_0}.
\end{align*}
By \eqref{slackers} and \eqref{devotion}, and using the fact that $\Lambda_{k+l}^\sharp$ is skewsymmetric, we get
\begin{align*}
& \thalf\duality{\xi_l}{X_k}-\dels \xi_l \\
=& \thalf\duality{\Lambda_k^\sharp(\dA \trace N^l)}{\xi_0} \\
=& -\thalf\duality{\Lambdasharp(\dA \trace N^l)}{(\Ns)^k\xi_0} \\
=& l(N\del\Lambda_{l-1}-\del\Lambda_l){(\Ns)^k\xi_0} \quad
\text{(by \eqref{Eqt:primaries})} \\
=& l(\inserts_{(\Ns)^{k+1}\xi_0}\del\Lambda_{l-1}
-\inserts_{(\Ns)^{k}\xi_0}\del\Lambda_{l}) \\
=& -l(\del\inserts_{(\Ns)^{k+1}\xi_0}\Lambda_{l-1}
-\del\inserts_{(\Ns)^{k}\xi_0}\Lambda_{l})
\quad\text{(by \eqref{slackers})} \\
=& -l(\del\inserts_{(\Ns)^{k+l}\xi_0}\Lambda_{l}
-\del\inserts_{(\Ns)^{k+l}\xi_0}\Lambda_{l})
\quad\text{(by \eqref{Eqn:waitress})} \\
=& 0 .\qedhere
\end{align*}
\end{proof}

\subsection{$a+b$ Lie bialgebras}

We finally show an example with $\bdirac^2\neq 0$.
Let $\mfg$ be a 2-dimensional real vector space with base $\{x_1,x_2\}$ and let $\{y_1,y_2\}$ be the dual base of $\mfgs$. Let $\LieG$ be the $a+b$ Lie algebra and let $\LieGs$
be the $c+d$ Lie algebra, i.e.,
\begin{eqnarray*}
\Abracket{x_1,x_2}= a x_1+ bx_2;&& \Asbracket{\eta^1,\eta^2}=  c
\eta^1+d \eta^2.
\end{eqnarray*}
Here $a,b,c$ and $d$ are arbitrary real numbers. It is easily
checked that both $\LieG$ and $\LieGs$ are Lie algebras and the pair $(\LieG,\LieGs)$ is a Lie bialgebra.

Since the modular cocycle $\xi_0\in \LieGs$ of $\LieG$ is given by $\xi_0(x)=\trace(\ad_x)$ for all $x\in\LieG$, one gets $\xi_0=b\eta^1+a\eta^2$. Similarly, the modular cocycle $X_0$ of $\LieGs$ is given by $X_0= d x_1+ c x_2$. Now by Proposition~\ref{Pro:commissoner}, we have
\begin{multline*}
4\fsmile =\duality{4\fsmile(x_1\wedge x_2)}{\eta^1\wedge\eta^2}=\duality{\ld{\xi_0} (x_1\wedge x_2)}{\eta^1\wedge\eta^2} \\
= -\duality{  x_1\wedge x_2 }{\ld{\xi_0}(\eta^1\wedge\eta^2)}
=-(bd+ac).
\end{multline*}


\begin{bibdiv}
\begin{biblist}
\bibselect{rhinitis}
\end{biblist}
\end{bibdiv}


\end{document}